\newcommand{\nc}{\newcommand}
\numberwithin{equation}{section}
\newenvironment{red}{\relax\color{red}}{\relax}
\newenvironment{blue}{\relax\color{Dandelion}}{\hspace*{.5ex}\relax}
\newenvironment{yellow}{\relax\color{yellow}}{\hspace*{.5ex}\relax}
\newcommand{\beb}{\begin{blue}}
\newcommand{\eb}{\end{blue}}
\newcommand{\bey}{\begin{yellow}}
\newcommand{\ey}{\end{yellow}}
\newcommand{\ber}{\begin{red}}
\newcommand{\er}{\end{red}}
\newcommand{\berE}[1]{\begin{red}{}\marginnote{\fbox{\scshape\lowercase{E}}}%
#1}  
\renewcommand{\le}{\leqslant}
\renewcommand{\ge}{\geqslant}
\theoremstyle{plain}
\newtheorem{lemma}{Lemma}[section]
\newtheorem{prop}[lemma]{Proposition}
\newtheorem{theorem}[lemma]{Theorem}
\newcommand{\Prop}{\begin{prop}}
\newcommand{\enprop}{\end{prop}}
\newcommand{\Lemma}{\begin{lemma}}
\newcommand{\enlemma}{\end{lemma}}
\newcommand{\Th}{\begin{theorem}}
\newcommand{\enth}{\end{theorem}}
\newtheorem{corollary}[lemma]{Corollary}
\newcommand{\Cor}{\begin{corollary}}
\newcommand{\encor}{\end{corollary}}
\newtheorem{definition}[lemma]{Definition}
\newtheorem*{conjecture}{Conjecture}
\newcommand{\Def}{\begin{definition}}
\newcommand{\edf}{\end{definition}}
\newtheorem{sublemma}[lemma]{Sublemma}
\newcommand{\Sublemma}{\begin{sublemma}}
\newcommand{\ensub}{\end{sublemma}}
\theoremstyle{definition}
\newtheorem{remark}[lemma]{Remark}
\newtheorem{example}[lemma]{Example}
\newtheorem{Convention}[lemma]{Convention}
\newcommand{\Conv}{\begin{Convention}}
\newcommand{\enconv}{\end{Convention}}
\newcommand{\Conj}{\begin{conjecture}}
\newcommand{\enconj}{\end{conjecture}}
\newcommand{\Rem}{\begin{remark}}
\newcommand{\enrem}{\end{remark}}
\newcommand{\C}{{\mathbb C}}
\newcommand{\Q}{\mathbb {Q}}
\newcommand{\Z}{{\mathbb Z}}
\newcommand{\D}{\mathscr{D}}
\newcommand{\q}{{\mathfrak{q}}}
\newcommand{\one}{{\bf{1}}}
\newcommand{\seteq}{\mathbin{:=}}
\newcommand{\sh}{\operatorname{sh}}
\newcommand{\hd}{{\operatorname{hd}}}
\newcommand{\g}{{\mathfrak{g}}}
\newcommand{\M}{{\mathscr M}}
\newcommand{\hs}{\hspace*}
\newcommand{\ms}{\mspace}
\newcommand{\on}{\operatorname}
\newcommand{\bni}{\be[label=\rm(\roman*)]}
\newcommand{\bnum}{\bni}
\newcommand{\bna}{\be[label=\rm(\alph*)]}
\newcommand{\ba}{\begin{array}}
\newcommand{\ea}{\end{array}}
\newcommand{\eqsub}{\begin{subequations}\begin{eqnarray}}
\newcommand{\eneqsub}{\end{eqnarray}\end{subequations}}
\newcommand{\ol}{\overline}
\newcommand{\ko}{{{\mathbf{k}}}}
\nc{\la}{\lambda}
\nc{\lam}{\lambda}
\nc{\U}[1][\g]{U_q(#1)}
\nc{\te}{\tilde{e}}
\nc{\tem}{\tilde{e}^{\mathrm{max}}}
\nc{\tei}{\tilde{e}_i}
\nc{\tf}{\tilde{f}}
\nc{\tfm}{\tilde{f}^{\mathrm{max}}}
\nc{\tfi}{\tilde{f}_i}
\nc{\tU}{\widetilde U_q(\g)}
\nc{\tE}{\widetilde{E}}
\nc{\tF}{\widetilde{F}}
\nc{\tK}{\widetilde{K}}
\nc{\tEs}{\widetilde{E}^*}
\nc{\tFs}{\widetilde{F}^*}
\nc{\ttE}{\widetilde{\mathcal{E}}}
\nc{\ttF}{\widetilde{\mathcal{F}}}
\nc{\ttEs}{\ttE^*}
\nc{\ttFs}{\ttF^*}
\nc{\tfs}{\tf^*}
\nc{\tfss}[1]{\tf^{* \hskip 0.05em #1}}
\nc{\tes}{\te^*}
\nc{\tesm}{\tilde{e}^{* \hskip 0.05em \mathrm{max}}}
\nc{\tfsm}{\tilde{f}^{* \hskip 0.05em \mathrm{max}}}
\nc{\tk}{\tilde{k}}
\nc{\tkone}{\tk_{\ol{1}}}
\nc{\teone}{\tilde{e}_{\ol{1}}}
\nc{\tfone}{\tilde{f}_{\ol{1}}}
\nc{\teibar}{\tilde{e}_{\ol{i}}} \nc{\tfibar}{\tilde{f}_{\ol{i}}}
\nc{\tki}{{\tk}_{\ol {i}}}
\nc{\BZ}{{\mathbb{Z}}}
\nc{\al}{\alpha}
\nc{\qs}{{q}}
\nc{\lan}{\langle}
\nc{\ran}{\rangle}
\nc{\re}{{\mathrm{re}}}
\nc{\wt}{\operatorname{wt}}
\nc{\hwt}{\widehat{\wt}}
\nc{\Ht}{\mathrm{ht}}
\nc{\hHt}{\widehat{\Ht}}
\nc{\ch}{\operatorname{ch}}
\nc{\Um}[1][\g]{U^-_q(#1)}
\nc{\Ue}{U^+_q(\g)}
\nc{\ep}{\varepsilon}
\nc{\hep}{\widehat{\ep}}
\nc{\vphi}{\varphi}
\nc{\sphi}{\varphi^*}
\nc{\eps}{\ep^*}
\nc{\heps}{\hep^{ \hskip 0.2em  *}}
\nc{\nn}{\nonumber}
\def\max{{\mathop{\mathrm{max}}}}
\nc{\vp}{\varpi}
\nc{\cls}{{\operatorname{cl}}}
\nc{\Wt}{{\operatorname{Wt}}}
\nc{\Us}{U'_q(\g)}
\nc{\La}{\Lambda}
\nc{\tLa}{\widetilde\Lambda}
\nc{\ro}{{\rm(}}
\nc{\rf}{{\rm)}}
\nc{\norm}{{\mathrm{norm}}}
\nc{\qbox}{\quad\mbox}
\nc{\braid}{{\mathfrak{B}}}
\nc{\Ad}{\operatorname{Ad}}
\nc{\Aut}{\operatorname{Aut}}
\nc{\dt}[1]{\tilde{\tilde #1}}
\nc{\Sn}{S^{{\mathrm{norm}}}}
\nc{\aff}{{\mathrm{aff}}}
\nc{\rk}{{\mathrm{rk}}}
\nc{\tP}{\widetilde{P}}
\nc{\tW}{\widetilde{W}}
\nc{\Dyn}{\mathrm{Dyn}}
\nc{\tD}{\widetilde{\Delta}}
\nc{\height}[1]{{\operatorname{ht}}(#1)}
\nc{\bl}{\bigl(}
\nc{\br}{\bigr)}
\nc{\Hecke}{\mathrm{H}}
\nc{\HA}{\Hecke^{\mathrm{A}}}
\nc{\HB}{\Hecke^{\mathrm{B}}}
\newcommand{\scbul}{{\,\raise1pt\hbox{$\scriptscriptstyle\bullet$}\,}}
\nc{\vac}{{\phi}}
\nc{\Bt}{\B_\theta(\g)}
\nc{\be}{\begin{enumerate}}
\nc{\ee}{\end{enumerate}}
\nc{\low}{{\mathrm{low}}}
\nc{\upper}{{\mathrm{up}}}
\nc{\Zodd}{\Z_{\mathrm{odd}}}
\nc{\Ft}[1][n]{\mathbb{P}\mathrm{ol}_{#1}}
\nc{\Ftf}[1][n]{\widetilde{\mathbb{P}\mathrm{ol}}_{#1}}
\nc{\KA}{\on{K}^{\mathrm{A}}}
\nc{\KB}{\on{K}^{\mathrm{B}}}
\nc{\Res}{\on{Res}}
\nc{\Fc}[1][{n,m}]{\mathbf{F}_{#1}}
\nc{\tphi}{\tilde{\varphi}}
\nc{\CO}{\mathscr{O}}
\nc{\inte}{\mathrm{int}}
\nc{\Oint}{\mathcal{O}^{\ge0}_{\inte}}
\nc{\vs}{\vspace*}
\nc{\tLt}{\widetilde{L}}
\nc{\tL}{\widetilde{\Lambda}}
\nc{\tu}{\tilde{u}}
\nc{\noi}{\noindent}
\nc{\heigh}{\mathfrak{t}}
\nc{\lowest}{\mathfrak{l}}
\nc{\rootl}{\mathsf{Q}}
\nc{\rlQ}{\rootl}
\nc{\cl}{\colon}
\nc{\uqpg}{U'_q(\mathfrak g)}
\nc{\uq}{\uqpg}
\nc{\Oh}{\widehat{\mathcal{O}}}
\nc{\pn}{p_{\mathfrak{n}}}
\nc{\KLR}{KLR algebra}
\nc{\KLRs}{KLR algebras}
\nc{\cor}{\mathbf{k}}
\nc{\cora}{{\cor(A)}}
\nc{\haut}{\mathrm{ht}}
\nc{\tens}{\mathop\otimes}
\nc{\gmod}{\mbox{-$\mathrm{gmod}$}}
\nc{\gMod}{\mbox{-$\mathrm{gMod}$}}
\nc{\proj}{\mbox{-$\mathrm{proj}$}}
\nc{\gproj}{\mbox{-$\mathrm{gproj}$}}
\nc{\smod}{\mbox{-$\mathrm{mod}$}}
\nc{\Mod}{\mbox{-$\mathrm{Mod}$}}
\nc{\h}{\mathfrak h}
\nc{\Rnorm}{R^{\mathrm{norm}}}
\nc{\Runiv}{R^{\mathrm{univ}}}
\nc{\Rren}{R^{\mathrm{ren}}}
\nc{\Vhat}{\widehat{V}}
\nc{\F}{\mathcal{F}}
\def\T{{\mathcal T}}
\nc{\fd}[1][A]{\on{\mathrm{flat.dim}_{#1}}}
\nc{\bP}{{\mathbb{P}}}
\nc{\bPh}{\widehat{\mathbb{P}}}
\nc{\bK}[1][{n}]{\widehat{\mathbb{K}}_{#1}}
\nc{\bV}[1][{n}]{\widehat{V}^{\otimes{#1}}}
\nc{\bVK}[1][{n}]{\widehat{V}^{\otimes{#1}}_{\widehat{\mathbb{K}}}}
\nc{\hV}{\widehat{V}}
\nc{\opp}{\mathrm{opp}}
\nc{\col}{\colon}
\nc{\oep}{\epsilon}
\nc{\qtext}{\quad\text}
\nc{\qtextq}[1]{\quad\text{#1}\quad}
\nc{\longtwoheadrightarrow}[1][]{\xymatrix{\ar@{->>}[r]^-{{#1}}&}}
\nc{\epiTo}[1][]{\longtwoheadrightarrow[{#1}]}
\nc{\epito}{\twoheadrightarrow}
\nc{\monoTo}[1][]{\xymatrix{\ar@{>->}[r]^-{{#1}}&}}
\nc{\sym}{\mathfrak{S}}
\nc{\inp}[1]{{({#1})_{\mathrm{n}}}}
\nc{\rtl}{\rootl}
\nc{\wtd}{\widetilde}
\nc{\etens}{\boxtimes}
\nc{\ds}[1]{\mathrm{d}(#1)}
\nc{\rmat}[1]{{\mathbf{r}}_%
{\mspace{-2mu}\raisebox{-.6ex}{${\scriptstyle{#1}}$}}}
\nc{\rmats}[1]{{\mathbf{r}}_%
{\mspace{-2mu}\raisebox{-.6ex}{${\scriptscriptstyle{#1}}$}}}
\nc{\shc}{\mathcal{C}}
\nc{\shs}{\mathcal{S}}
\nc{\Fct}{{\on{Fct}}}
\nc{\tC}{\widetilde{\shc}}
\nc{\Zp}{\Z_{\ge0}}
\nc{\tPhi}{\widetilde{\Phi}}
\nc{\tT}{{\widetilde{\T}}}
\nc{\Ob}{\on{Ob}}
\nc{\bwr}{\mbox{\large$\wr$}}
\nc{\Img}{\on{Im}}
\nc{\Ab}{\mathcal{A}^{\mathrm{big}}}
\nc{\Sb}{\mathcal{S}^{\mathrm{big}}}
\nc{\As}{\mathcal{A}}
\nc{\Ss}{\mathcal{S}}
\nc{\ntens}{\widetilde{\otimes}}
\nc{\hR}{\widehat{R}}
\nc{\nconv}{\mathop{\mbox{\large $\odot$}}}
\nc{\snconv}{\mbox{\scriptsize$\odot$}}
\nc{\ts}{\tilde{s}}
\nc{\sho}{\mathcal{O}}
\nc{\bc}{\begin{cases}}
\nc{\ec}{\end{cases}}
\nc{\slnh}{{\widehat{\mathfrak{sl}}_N}}
\nc{\UA}{U_q'(\slnh)}
\nc{\KR}{R_K}
\nc{\cQ}{\mathcal{Q}}
\nc{\Irr}{\mathcal{I}rr}
\nc{\tQ}{\widetilde{\cQ}}
\nc{\bs}{\mathbf{s}}
\nc{\bL}{\mathbb{L}}
\nc{\tg}{\tilde{g}}
\nc{\conv}{\mathbin{\mbox{\large $\circ$}}}
\nc{\shconv}{\mathbin{\large\diamond}}
\nc{\sconv}{\mathbin{\large\Delta}}
\nc{\stens}{\mathbin{\large\Delta}}
\nc{\hconv}{\mathbin{\nabla}}
\nc{\htens}{\mathbin{\nabla}}
\nc{\Rm}{R^{\mathrm{ren}}}
\nc{\bQ}{\ol{Q}}
\nc{\de}{\on{\textfrak{d}\ms{1mu}}}
\nc{\xmono}{\ar@{>->}}
\nc{\xepi}{\ar@{->>}}
\nc{\db}[1]{\raisebox{-.5ex}[2ex][1.8ex]{$#1$}}
\nc{\wb}[1]{\mbox{$\rule[-1.1ex]{0ex}{2ex}#1$}}
\nc{\univ}{\mathrm{univ}}
\nc{\rM}{{}^*\mspace{-2mu}M}
\nc{\lM}{M^*}
\nc{\uqm}{\uq\smod}
\nc{\tR}{\widetilde{R}_{\gamma,\beta}}
\nc{\tx}{\tilde{x}}
\nc{\bi}{\mathbf{i}}
\nc{\ttau}{\widetilde{\tau}}
\nc{\tEnd}{\on{\widetilde{E}nd}}
\nc{\tHom}{\on{\widetilde{H}om}}
\nc{\K}{{J}}
\nc{\Kex}{{\K}_{\mathrm{ex}}}
\nc{\Kfr}{{\K}_{\mathrm{f\mspace{.01mu}r}}}
\nc{\coro}{\cor}
\nc{\tB}{\widetilde{B}}
\nc{\seed}{\mathscr{S}}
\nc{\up}{\mathrm{up}}
\nc{\bfa}{\mathbf{a}}
\nc{\bfb}{\mathbf{b}}
\nc{\bfc}{\mathbf{c}}
\nc{\bfe}{\mathbf{e}}
\newlength{\mylength}
\nc{\ov}[1]{\overline{#1}}
\nc{\Wlmj}[3]{\W_{#2,#3}^{(#1)}}
\nc{\Mkl}[2]{\M_\ttww(#1,#2)}
\nc{\mqs}{(-q^2)}
\nc{\Cquiver}{\upsigma}
\nc{\mut}[1]{{\mu}_{\mspace{-2mu}\raisebox{-.5ex}{${\scriptstyle{#1}}$}}}
\nc{\Kt}{\mathcal K_t}
\nc{\KT}{\mathbb{K}_t}
\nc{\yim}{y_{i,m}}
\nc{\yjm}{y_{j,m}}
\nc{\yjp}{y_{j,p}}
\nc{\yimp}{y_{i,m+1}}
\nc{\yjmp}{y_{j,m+1}}
\nc{\Refl}{\mathscr{S}}
\nc{\Reflinv}{{\Refl}^{-1}}
\nc{\catC}{\mathscr C}
\nc{\catA}{\mathcal A}
\nc{\shift}{\mathsf{sh}}
\nc{\rE}{ \mathsf{E} }
\nc{\rW}{ \mathcal{W} }
\nc{\rES}{ \mathcal{E} }
\nc{\brd}{\sigma} 
\nc{\into}{\xymatrix@C=3ex{{}\ar@{^{(}->}[r]&{}}}
\nc{\dual}{\D\ms{1.5mu}}
\nc{\cdual}{\mathsf{D}}
\nc{\cat}[1][{\g}]{\catC_{#1}^0}
\nc{\catCO}{{\catC_\g^0}}
\nc{\catCP}{{\catC_\g^{+}}}
\nc{\catCOD}{{\catC_\g^{0, \ddD}}}
\nc{\catCQ}{{\catC_{\qQ}}}
\nc{\catCQd}{{\catC_{\widetilde{\qQ}}}}
\nc{\catCD}{{\catC_{\ddD}}}
\nc{\catCDK}{{\catC_{\ddD, \iK}}}
\nc{\Li}{{\La^\infty}}
\nc{\sig}{{\sigma(\g)}}
\nc{\sigZ}{{\sigma_0(\g)}}
\nc{\sigQ}{{\sigma_\qQ(\g)}}
\nc{\sigQd}{{\sigma_{\widetilde{\qQ}}(\g)}}
\nc{\phiQd}{\phi_{\widetilde{\qQ}}}
\nc{\sigD}{{\sigma_\ddD(\g)}}
\nc{\ZZ}{{\mathbf{Z}}}
\nc{\sP}{{\mathsf{P}}}
\nc{\sV}{{\mathsf{V}}}
\nc{\rxw}{{\underline{w}}}
\nc{\boten}[1]{\overrightarrow{\bigotimes_{#1}}}
\nc{\cmA}{{\mathsf{A}}}
\nc{\cmC}{{\mathsf{C}}}
\nc{\ddD}{{\mathcal{D}}}
\nc{\ddDQ}{{\ddD_Q}}
\nc{\ddDQd}{{\ddD_{\widetilde{Q}}}}
\nc{\qQ}{{\mathcal{Q}}}
\nc{\gf}{{\g_{\mathrm{fin}}}}
\nc{\Df}{{\Delta_{\mathrm{fin}}}}
\nc{\If}{{I_{\mathrm{fin}}}}
\nc{\cmAf}{{\cmA_{\mathrm{fin}}}}
\nc{\weyl}{{\mathsf{W}}}
\nc{\weylf}{{\mathsf{W}_{\mathrm{fin}}}}
\nc{\sg}{{\mathfrak{S}}}
\nc{\weylA}{{\mathsf{W}_\cmA}}
\nc{\weylC}{{\mathsf{W}_\cmC}}
\nc{\Deg}{\mathrm{Deg}}
\nc{\Di}{\Deg^\infty}
\nc{\KRc}{{K_{q=1}(R_\cmC\gmod)}}
\nc{\prD}{{\Delta^+}}
\nc{\prDf}{{\Delta^+_{\mathrm{fin}}}}
\nc{\nrD}{{\Delta^-}}
\nc{\prDA}{{\Delta^+_\cmA}}
\nc{\prDC}{{\Delta^+_\cmC}}
\nc{\nrDC}{{\Delta^-_\cmC}}
\nc{\n}{{\mathfrak{n}}}
\nc{\Rt}{\mathsf{L}} 
\nc{\Cp}{\mathsf{V}} 
\nc{\cuspS}{{\mathsf{S}}}
\nc{\st}[1]{\{{#1}\}}
\nc{\bst}[1]{\bigl\{{#1}\bigr\}}
\nc{\WS}{Quantum affine Schur-Weyl duality\xspace}
\nc{\CWS}{Quantum affine Weyl-Schur duality}
\nc{\zz}{{{\mathbf{z}}}}
\nc{\wlP}{\mathsf{P}}
\nc{\wl}{\wlP}
\nc{\clp}{{\mathrm{cl}}}
\nc{\wlPc}{{\wlP_\clp}}
\nc{\awlP}{\widehat{\mathsf{P}}}
\nc{\dM}{\mathsf{M}}
\nc{\dC}{\mathsf{C}}
\nc{\cC}{\mathcal{C}}
\nc{\lR}{\widetilde{{R}}}
\nc{\zero}{\mathrm{zero}}
\nc{\PD}{principal }
\nc{\prtl}[1][J]{\rootl_{#1}^+}
\nc{\hL}{\widehat{\Rt}}
\nc{\hF}{\widehat{\F}}
\nc{\Proof}{\begin{proof}}
\nc{\QED}{\end{proof}}
\nc{\e}{\mathrm{e}}
\nc{\Aff}{\mathrm{Aff}}
\nc{\rT}{\mathcal{T}}		
\nc{\rr}{rationally renormalizable\xspace}
\nc{\RA}{{R_\cmA}}		
\nc{\RC}{{R_\cmC}}		
\nc{\proolim}[1][]{\mathop{``{\varprojlim}{\mbox{''}}}\limits_{#1}}
\nc{\qtq}[1][\text{and}]{\quad\text{#1}\quad}
\newcommand{\iB}{{{_i}B(\infty)}} 			 
\newcommand{\Bi}{{B_i (\infty)}} 			 
\newcommand{\cT}{\mathrm{S}} 		
\newcommand{\cTs}{\mathrm{S}^{\ms{1mu}*}}
\newcommand{\tcT}{\widetilde{\mathrm{S}}} 		 
\nc{\corh}{\widehat{\cor}}
\nc{\ang}[1]{\langle{#1}\rangle}
\nc{\rc}{renormalizing coefficient\xspace}
\nc{\cz}{{\cor[z^{\pm1}]}}
\nc{\tp}{\ms{1.5mu}{\widetilde{p}}\ms{2mu}}
\nc{\G}{\mathcal{G}}
\nc{\cc}{\mathfrak{c}}
\nc{\rsP}{{\Phi_\g}}
\nc{\rsX}{{X_\g}}
\nc{\rs}{ \mathsf{s} }
\nc{\Dynkin}{\mathrm{D}}
\nc{\Dat}{\sigma}
\nc{\hf}{\xi}
\nc{\cB}{\widehat{B}}
\nc{\iK}{\mathsf{K}}
\nc{\cBg}[1][\g]{\widehat{B}_{#1}(\infty)}
\nc{\cBsg}[1][\g]{\widehat{B}_{#1}(\infty)^*}
\nc{\cBgk}[1][\g]{\widehat{B}_{#1}^{\iK}(\infty)}
\nc{\cb}{\mathbf{b}}
\nc{\cI}{ \widehat{I} }
\nc{\cIf}{{\widehat{I}_{\mathrm{fin}}}}
\nc{\cIz}{{\widehat{I}_{0}}}
\nc{\cJ}{ \widehat{J} }
\nc{\sB}{\mathcal{B}}
\nc{\sBk}{\sB_{\iK}}
\nc{\sBdk}{\sB_{\ddD, \iK}}
\nc{\sBD}{\sB_{\ddD}}
\nc{\sBQ}{\sB_{\qQ}}
\nc{\sBkg}{\sBk(\g)}
\nc{\cs}{\star}
\nc{\cd}{\mathrm{D}}
\nc{\cm}{\mathbf{m}}
\nc{\MS}{\mathsf{MS}}
\nc{\hMS}{\widehat{\mathsf{MS}}}
\nc{\rS}{\mathbf{S}}
\nc{\rSs}{\rS^*}
\nc{\brS}{\overline{\rS}}
\nc{\crI}{ \mathsf{I}}
\nc{\crhI}{\mathscr{S} }
\nc{\crD}{\mathsf{D}}
\nc{\crc}{\mathsf{c}}
\nc{\crB}{\mathscr{P}_n}
\nc{\cru}{\mathsf{u}}
\nc{\crsh}{\mathsf{sh}}
\nc{\bfs}{\mathbf{s}}
\nc{\crBB}[1]{\mathscr{P}_#1}
\nc{\gc}{{\g_{\cmC}}}
\nc{\cL}{\mathcal{L}}
\nc{\cLD}{{\cL_\ddD}}
\nc{\ccL}{\mathscr{L}}
\nc{\ccLD}{{\ccL_\ddD}}
\nc{\clen}{\mathsf{len}}
\nc{\hv}{\mathsf{1}}
\nc{\qt}[1]{\quad\text{#1}}
\nc{\snoi}{\smallskip\noindent}
\nc{\mnoi}{\medskip\noindent}
\nc{\ul}[1]{\underline{#1}}
\nc{\dul}[1]{\underline{\underline{#1}}}
\nc{\tul}[1]{\underline{\underline{\underline{#1}}}}
\nc{\qh}{\qedhere}
\nc{\ca}{\mathsf{v}}
\nc{\sck}[1][k]{\ms{8mu}{\raisebox{-1.3ex}{$\scriptstyle{#1}$}\hs{-1.4ex}{\succ}}\ms{4mu}}
\nc{\scke}[1][k]{\ms{8mu}{\raisebox{-1.3ex}{$\scriptstyle{#1}$}\hs{-1.4ex}%
{\succcurlyeq}}\ms{4mu}}
\nc{\edot}{\emptyset}
\nc{\Nf}{N_{\gf}}
\nc{\fin}{\mathrm{fin}}
\nc{\trg}{\scalebox{.7}{$\triangle$}}
\nc{\ake}[1][1ex]{\rule[-#1]{0ex}{1ex}}
\nc{\akew}[1][1ex]{\rule[-1ex]{#1}{0ex}}
\nc{\akeu}[1][1ex]{\rule[#1]{0ex}{1ex}}
\nc{\bg}{\mathscr{B}}
\newcommand{\B}{B(\infty)}
\nc{\ipi}{{{_i}\pi}}
\nc{\pii}{{\pi_i}}
\nc{\Ld}{\mathcal{P}}
\nc{\Dc}{\Upsilon}
\nc{\DcL}{\mathcal{L}}
\nc{\bR}{\mathsf{R}}
\nc{\bRs}{\bR^*}
\nc{\Y}{\mathrm{Y}}
\nc{\CC}{\mathbb{C}}
\nc{\bS}{\mathsf{S}}
\nc{\x}{\mathbf{x}}
\nc{\vP}{\mathrm{P}}
\nc{\vD}{\Delta}
\nc{\bfi}{\mathbf{i}}
\nc{\bfj}{\mathbf{j}}
\nc{\bfv}{\mathbf{v}}
\nc{\bfm}{\mathbf{m}}
\nc{\iGr}{\mathbb{G}}
\nc{\biGr}{ \overline\iGr}
\nc{\gGr}{\widetilde{\mathrm{Gr}}^\circ }
\nc{\tGr}{\widetilde{\mathrm{Gr}}}
\nc{\tdGr}{\widetilde{\mathrm{Gr}}^{ \diamondsuit}}
\nc{\toGr}{\widetilde{\mathrm{Gr}}^\circ}
\nc{\bT}{\mathsf{T}}
\nc{\qT}{\mathcal{Y}_t}
\nc{\qAg}{\widehat{\mathcal{A}}_\qq}
\nc{\sfb}{\mathsf{b}}
\nc{\cvC}[1]{\mathcal{C}(#1)}
\nc{\cvF}[1]{\mathcal{F}(#1)}
\nc{\tsig}{\tilde{\sigma}}
\nc{\eF}{\equiv_{\mathcal F}}
\nc{\ie}{\mathsf{e}}
\nc{\qq}{\mathtt{q}}
\nc{\qK}{\mathbb{K}}
\nc{\pk}{\mathsf{p}}
\nc{\bC}{\mathsf{C}}
\nc{\bbA}{\mathbb{A}}
\newcommand{\bnom}[1]{\begin{bmatrix}#1\end{bmatrix}}
\nc{\vPab}[1]{\vP_{#1}}
\nc{\SSYT}{{\rm SSYT}}
\nc{\bbS}{\overline{\mathrm{S}}}
\nc{\gB}{\mathscr{B}}
\nc{\gt}{\mathsf{t}}
\nc{\KG}{\Omega}
\newcommand{\oprod}{\prod\limits^{{\scriptstyle\xrightarrow{\akew[1ex]}}}}
\newcommand{\rprod}{\prod^{\xleftarrow{}}}
\title[Grassmannians and extended crystals of type $A$]
{Braid group actions on grassmannians and extended crystals of type $A$}
\author[J.-R. Li]{Jian-Rong Li}
\thanks{The research of J.-R. Li was supported by the Austrian Science Fund (FWF): P-34602, Grant DOI: 10.55776/P34602, and PAT 9039323, Grant-DOI 10.55776/PAT9039323.}
\address[J.-R. Li]{Faculty of Mathematics, University of Vienna, Oskar-Morgenstern-Platz 1, 1090 Vienna, Austria}
\email[J.-R. Li]{lijr07@gmail.com}
\author[E. Park]{Euiyong Park}
\thanks{The research of E.\ Park was supported by the  National Research Foundation of Korea(NRF) grant funded by the Korea government (MSIT) (RS-2023-00273425 and NRF-2020R1A5A1016126).}
\address[E. Park]{Department of Mathematics, University of Seoul, Seoul 02504, Korea}
\email[E. Park]{epark@uos.ac.kr}
\date{October 12, 2024}
\begin{document}

\begin{abstract}
Let $\sigma_i$ be the braid actions on infinite Grassmannian cluster algebras induced from Fraser's braid group actions. Let $\bT_i$ be the braid group actions on (quantum) Grothendieck rings of Hernandez-Leclerc category $\catCO$ of affine type $A_n^{(1)}$, and $\bR_i$
the braid group actions on the corresponding extended crystals.
In the paper,  we prove that the actions $\sigma_i$  coincide with the braid group actions $\bT_i$ and $\bR_i$. 
\end{abstract}

\maketitle

\setcounter{tocdepth}{1}
\tableofcontents

\section{Introduction}

Let $\catC_\g$ be the category of finite-dimensional integrable modules over a quantum affine algebra $U_q'(\g)$, where $q$ is an indeterminate. Hernandez and Leclerc introduced a distinguished subcategory $\catCO$ of $\catC_\g$ determined by certain fundamental modules determined by a $Q$-datum $\mathcal{Q}$ (see \cite{HL10, HL15} and see also \cite{FHOO22}). A cluster-algebraic approach to the Grothendieck ring $K(\catCO)$ of the category $ \catCO$ was introduced by Hernandez and Leclerc, which led to beautiful results on the structure of $K(\catCO)$ (see \cite{FHOO23, HL10, HL15, KKOP20, KKOP24B} and references therein).

The \emph{quantum Grothendieck ring} $K_t(\catCO)$ is a $t$-deformation of the Grothendieck ring $K(\catCO)$ which is realized as a subalgebra inside the \emph{quantum torus} via \emph{$(q, t)$-characters} of modules in $\catCO$ (\cite{Her04, Nak04, VV03}). A ring presentation of $K_t(\catCO)$ was introduced and studied in \cite{HL15}. This formal algebra defined by the ring presentation is denoted by $\qAg$, which is called the \emph{bosonic extension} (see Section \ref{Sec: bosonic ext}).  
Here we set $\qq := t^{-1}$ for consistency with the previous works \cite{KKOP21, KKOP24B, OP24} of the second named author. 
There exist braid group actions $T_i$ on $\qAg$ (\cite{KKOP21, JLO23B, KKOP24B}), which take a crucial role in \emph{PBW theory} for $\qAg$ introduced in \cite{OP24, KKOP24B}. It is also proved in \cite{KKOP24B} that $T_i$ can be understood as lifts of the braid group actions $R_i$ on the \emph{extended crystals} $\widehat{B}(\infty) $ (see \cite{KP22, P23} and see also Section \ref{Sec: extended crystal}). Let $\sB(\g)$ be the set of the isomorphism classes of simple modules in $\catCO$. It is proved in \cite{KP22} that $\sB(\g)$ has the \emph{categorical crystal structure}, which is isomorphic to the extended crystal $\cB(\infty)$.

Given a $Q$-datum $\qQ$, there exists the corresponding isomorphism  
\begin{align*}
\Phi_\qQ: \qAg \buildrel \sim \over \longrightarrow \qK, 
\end{align*}
where $\qK $ is the scalar extension $\Q(t^{1/2}) \tens_{\Z[t^{\pm 1}]}  K_t(\catCO)$ of the quantum Grothendieck ring $ K_t(\catCO)$. 
Via the isomorphism $\Phi_\qQ$, one can obtain the braid group actions $\bT_i^\qQ$ on $\qK$ induced from $T_i$. The braid group actions $\bT_i^\qQ$  preserve the $(q,t)$-characters of simple modules in $\qK$ in terms of \emph{global basis} of $\qAg$ (\cite{KKOP24A,KKOP24C}). This means that $\bT_i^\qQ$ can be viewed as  permutations of the set of simple modules in $\catCO$.
Thus $\bT_i^\qQ$ send prime (resp.\ real) elements to prime (resp.\ real) elements. 
It turns out that $\bT_i^\qQ$ coincide with the braid group actions $\bR_i^\qQ$ on the categorical crystal $\sB(\g)$, which arise from $R_i$ on $\cB(\infty)$ via the isomorphism $\phi_\qQ:\cB(\infty) \buildrel \sim \over \longrightarrow \sB(\g)$ determined by $\qQ$. 

In affine type $A_n^{(1)}$, the Grothendieck ring $K(\catCO)$ has an interesting connection to the \emph{Grassmannian cluster algebras} (\cite{Sco06}). 
The ring $ \C \otimes_\Z K(\catC_\ell)$ of the subcategory $\catC_\ell$ (\cite{HL10}) of $\catCO$ is isomorphic to the quotient of the cluster algebra $\C[\tGr(n+1, n+\ell+2)]$ by specializing certain frozen variables at $1$ (see \cite{HL10} and see also \cite{CDFL20}). We call this quotient ring the \emph{Grassmannian cluster algebra}. Thus, the Grassmannian cluster algebra is categorified by the subcategory $\catC_\ell$ in monoidal sense. 
Fraser \cite{Fra20} introduced braid group actions $\tsig_i$ on a certain localization $\C[\tGr^\circ (n+1, n+\ell+2)]$. These braid group actions give an important tool to study Grassmannian cluster algebras because braid group actions send cluster variables to cluster variables (up to multiplying certain frozen variables). 
One can consider the actions $\sigma_i$ induced from  $\tsig_i$ on \emph{infinite Grassmannian cluster algebras} as taking $\ell \rightarrow \infty$.
Since $\catCO$ can be viewed as a limit of $\catC_\ell$ as $\ell \rightarrow \infty$, the braid groups actions $\sigma_i$ also can be interpreted as actions on $K(\catCO)$. 

In the paper, we show that the braid group actions $\sigma_i$ on infinite Grassmannian cluster algebras coincide with the braid group actions $\bT_i^\qQ$ on (quantum) Grothendieck rings and $\bR_i^\qQ$ on extended crystals. 
Since $\bR_i^\qQ$ is described combinatorially in terms of crystals, this result leads us to a way to compute the braid group actions on (infinite) Grassmannian cluster algebras in the viewpoint of the crystal theory.

Let us explain the result in more details. Let $m \in \Z_{>1}$ and define
\begin{align*}
\cvC{m}&:= \{  (i_1,i_2, \ldots, i_m) \in \Z^{ m} \mid i_1 < i_2 < \ldots < i_m \}, \\
\cvF{m}&:= \{ (i, i+1 \ldots, i+m-1 ) \in \Z^{m} \mid i\in \Z \}. 
\end{align*}
For each $\bfi = (i_1, i_2, \ldots, i_m) \in \cvC{m}$, we set $ \vP_\bfi = \vP_{i_1, i_2, \ldots, i_m} $ to be an indeterminate. We set $\widetilde{A} := \C[\vP_\bfi, \ \vP_\bfj^{-1} \mid \bfi \in \cvC{m},\ \bfj \in \cvF{m}]$ and  define $\iGr_m$ to be the quotient algebra of $\widetilde{A}$ by the ideal generated by Pl\"{u}cker relations (\ref{Eq: Plucker}). Denote by $\biGr_m$ the quotient of $\iGr_m$ by the ideal generated by 
$ \vP_{\bfi} - 1 $ for all $\bfi \in \cvF{m}$. It is shown that the algebra $\biGr_m$ is the infinite Grassmannian cluster algebra by investigating the localization $\C[\tdGr(m, N)]$ (see Lemma \ref{Lem: phimN} and \eqref{Eq: diagram}). 
We study Fraser's braid group action $\tsig_i$ ($i\in [1,m-1]$) on the localization of $\C[ \toGr(m,N)]$ when $m$ divides $N$, and extend the action $\tsig_i$ to the infinite Grassmannian cluster algebra $\biGr_m$, which are denoted by $\sigma_i$ (see Proposition \ref{prop: sigma_i} and \eqref{Eq: sigmai}). Remark \ref{Rmk: inf Gr} explains the braid group actions $\sigma_i$  in terms of (infinite) configuration of vectors.

We choose and fix the $Q$-datum $\qQ$ of type $A_n$ defined in \eqref{Eq: An Q-datum}. We simply write $\bT_i := \bT_i^\qQ$ and $\bR_i := \bR_i^\qQ $ for any $i\in I:= [1,n]$. Since $\bT_i$ respect to the set of $(q,t)$-characters of simple modules, $\bT_i$ can be regarded as an automorphisms of the Grothendieck ring $K(\catCO)$ by specializing at $t=1$.  
We regard the categorical crystal $\sB(\g)$ as a subset of $K(\catCO)$. 
We then obtain the isomorphism 
\begin{align*} 
\KG_n : K(\catCO)  \buildrel \sim \over \longrightarrow \biGr_{n+1} 
\end{align*}
defined in \eqref{Eq: Phi} and both of $K(\catCO)$ and $\biGr_{n+1} $ have the braid group actions $\bT_i$ and $\sigma_i$ respectively.
By computing explicitly the image of certain Pl\"{u}cker variables under the action $\tsig_1$ (Lemma \ref{Lem: tsig}),  
we prove that $ \KG_n \circ \bT_i = \sigma_i \circ \KG_n $, i.e., the following diagram commutes:
$$
\xymatrix{
K(\catCO) \ar[d]_{\bT_i}  \ar[rr]^\sim_{\KG_n} && \biGr_{n+1} \ar[d]^{\sigma_i} \\
K(\catCO) \ar[rr]^\sim_{\KG_n} && \biGr_{n+1},
}
$$ 
(see Theorem \ref{Thm: T=sig}). Therefore the braid group actions $\sigma_i$ on $\biGr_{n+1}$ coincide with the braid group actions $\bT_i$ on $K(\catCO)$, and hence coincide with the braid group actions $\bR_i$ on $\mathcal{B}(\mathfrak{g})$ (see Corollary \ref{Cor: R=sig}).
In course of the proof, the \emph{multisegment realization} of $\cB(\infty)$ and the crystal description of $\bR_i$ are used crucially.
In rank $2$ cases, we give an explicit formula for braid group actions and compute an example in  viewpoint of primeness and reality (see Section \ref{sec:examples for rank 2 cases}). 

The paper is organised as follows. In Section \ref{sec:preliminaries}, we recall results of quantum affine algebras, Hernandez-Leclerc categories, Bosonic extensions, extended crystals, and categorical crystals. In Section \ref{Sec: ed An}, we give explicit descriptions for type $A_n$. In Section \ref{Sec: igca}, we define braid group actions on the infinite Grassmannian cluster algebra by extending the braid group actions introduced by Fraser. In Section \ref{sec:compare braid group actions}, we compare the braid group actions $\bT_i$, $\bR_i$ and $\sigma_i$. In Section \ref{sec:examples for rank 2 cases}, we illustrate the braid group actions by examples. 


\vskip 1em

\subsection*{Convention}
Throughout this paper, we use the following convention.
\bnum
\item For a statement $P$, we set $\delta(P)$ to be $1$ or $0$ depending on whether $ P$ is true or not. In particular, we set $\delta_{i,j}=\delta(i =j)$. 

\item For $a\in \Z \cup \{ -\infty \} $ and $b\in \Z \cup \{ \infty \} $ with $a\le b$, we set 
\begin{align*}
& [a,b] =\{  k \in \Z \ | \ a \le k \le b\}, &&  [a,b) =\{  k \in \Z \ | \ a \le k < b\}, \allowdisplaybreaks\\
& (a,b] =\{  k \in \Z \ | \ a < k \le b\}, &&  (a,b) =\{  k \in \Z \ | \ a < k < b\},
\end{align*}
and call them \emph{intervals}. 
When $a> b$, we understand them as empty sets. For simplicity, when $a=b$, we write $[a]$ for $[a,b]$.

\item For a totally ordered set $J = \{ \cdots < j_{-1} < j_0 < j_1 < j_2 < \cdots \}$, write
$$
\oprod_{j \in J} A_j \seteq \cdots A_{j_2}A_{j_1}A_{j_0}A_{j_{-1}}A_{j_{-2}} \cdots, \quad
\rprod_{j \in J} A_j \seteq \cdots A_{j_{-2}}A_{j_{-1}}A_{j_0}A_{j_{1}}A_{j_{2}} \cdots. 
$$

\ee

\vskip 2em

\section{Preliminaries} \label{sec:preliminaries}
In this section, we recall results of quantum affine algebras, Hernandez-Leclerc categories, Bosonic extensions, extended crystals, and categorical crystals. 
We refer the readers to references \cite{CP94, HL10, HL15, KP22, P23}.

\subsection{Quantum affine algebras of affine type $A_n^{(1)}$} 
We shall review briefly  the quantum affine algebra of type $A_n^{(1)}$ in this subsection. 

Let $I = \{0,1, \ldots, n \}$ and let $\cmA =(a_{i,j})_{i,j \in I}$ be a \emph{Cartan matrix} of affine type $A_n^{(1)}$.  
We shall use the standard convention for the Dynkin diagram of affine type $A_n^{(1)}$ in 
\cite{Kac}.
Let $\Pi=\{\al_i \}_{i \in I}$ be the set of simple roots,  $\Pi^\vee=\{ h_i \}_{i \in I}$ the set of simple coroots, and
$\wl$ the weight lattice of $\g$. 
We denote by $\prD$ the set of positive roots. The quintuple $(\cmA,\wl,\Pi,\wl^\vee,\Pi^\vee)$ is called the \emph{Cartan datum}. 
Let $\g$ be the \emph{affine Kac-Moody algebra} associated with $\cmA$. We set $I_0 := I \setminus \{0\}$, and denote by $\g_0$  the subalgebra of $\g$ generated by $f_i$, $e_i$ and $h_i$ for $i\in I_0$.

Let $\cor$ be the algebraic closure of $\Q(q)$ in $\cup_{t>0} \C((q^{1/t}))$, where $q$ is an indeterminate. We define 
$$
[n]_q= \dfrac{q^n-q^{-n}}{q-q^{-1}} \quad \text{ and }\quad  [n]_q! = \prod_{k=1}^n [k]_q.
$$

\begin{definition} \label{Def: GKM}
The {\em quantum affine algebra} $U_q(\g)$ associated with $(\cmA,\wl,\Pi,\wl^\vee,\Pi^\vee)$ is the associative algebra over $\ko$ with $1$ generated by $e_i,f_i$ $(i \in I)$ and
$q^{h}$ $(h \in  \wl^{\vee})$ satisfying following relations:
\bnum
\item  $q^0=1, q^{h} q^{h'}=q^{h+h'} $\quad for $ h,h' \in  \wl^{\vee},$
\item  $q^{h}e_i q^{-h}= q^{\langle h, \alpha_i \rangle} e_i$,
$q^{h}f_i q^{-h} = q^{-\langle h, \alpha_i \rangle }f_i$\quad for $h \in \wl^{\vee}, i \in I$,
\item  $e_if_j - f_je_i =  \delta_{ij} \dfrac{K_i -K^{-1}_i}{q_i- q^{-1}_i }, \ \ \text{ where } K_i=q^{ h_i},$
\item  $\displaystyle \sum^{1-a_{ij}}_{k=0}
(-1)^ke^{(1-a_{ij}-k)}_i e_j e^{(k)}_i =  \sum^{1-a_{ij}}_{k=0} (-1)^k
f^{(1-a_{ij}-k)}_i f_jf^{(k)}_i=0 \quad \text{ for }  i \ne j, $
\ee
where $e_i^{(k)}=e_i^k/[k]_q!$ and $f_i^{(k)}=f_i^k/[k]_q!$.
\end{definition}
The coproduct $\Delta$ of $U_q(\g)$ is defined by
\begin{align*} 
\Delta(q^h)=q^h \tens q^h, \ \ \Delta(e_i)=e_i \tens K_i^{-1}+1 \tens e_i, \ \Delta(f_i)=f_i \tens 1 +K_i \tens f_i.
\end{align*}

\begin{remark} \label{rmk: convention}
The coproduct $\Delta$ is the same as that of \cite{Kas02, KKOP20,KKOP23,KP22}. 
For the consistency with the previous work \cite{KP22} on categorical crystals by the second author, we basically follow the conventions and notations given in \cite{KKOP23, KP22}. There are some differences from \cite{HL15, FHOO22}.  

\end{remark}

We denote by  $U_q'(\g)$ the $\cor$-subalgebra of $U_q(\g)$ generated by $ e_i$, $f_i$ and $K_i^{\pm1}$  for $i\in I$.
Let $\catC_\g$ be the category of finite-dimensional \emph{integrable} $U_q'(\g)$-modules. The \emph{tensor product} $\tens$ gives a monoidal category structure on $\catC_\g$.

We denote by $\catC_\g$ the category of finite-dimensional integrable $U_q'(\g)$-modules.
Simple modules in $\catC_\g$ are parameterized by the set $(1+z\cor[z])^{I_0}$ of $I_0$-tuples of monic polynomials in an indeterminate $z$. These polynomials are called \emph{Drinfeld polynomials} (\cite{CP94}). 
Let $Y_{i,a}$ be indeterminates for $i\in I_0$ and $a \in \Z$.
We write a monomial $m $ as 
\begin{align*}
m = \prod_{(i,p) \in I_0 \times \Z  } Y_{i,p}^{u_{i,p}(m)}.
\end{align*}
A monomial $m$ is called \emph{dominant} if $u_{i,p}(m) \ge 0$ for all $(i,p)$.
For a dominant monomial $m $, there exists a simple module $V(m) \in \catC_\g$ associated with the Drinfeld polynomial $(\prod_p (1-q^pz)^{u_{i,p}(m)})_{i \in I_0}$.
For each $i\in I_0$, we call $V(Y_{i,a})$ the \emph{$i$-th fundamental module}. 
Note that the head of the ordered tensor product 
$$ 
V(Y_{i_1,a_1}) \otimes V(Y_{i_2,a_2}) \otimes \cdots  \otimes V(Y_{i_k,a_k}) \quad \text{($a_1 \ge a_2 \ge \cdots \ge a_k $)}
$$  
is isomorphic to the simple module $ V \left( \prod_{t=1}^k  Y_{i_t, a_t} \right)$ (see \cite{Kas02}). 

We write $\dual$ for the right dual functor in $\catC_\g$. Note that 
\begin{align} \label{Eq: dual V}
\dual (V(Y_{i,a}) ) \simeq V(Y_{n+1-i,a+n+1})
\end{align}
for any $i\in I_0$ (see \cite[Section 2]{KKOP20} for example).

\begin{remark} \label{Rmk: V(varpi)}
Let $V(\varpi_i)_x$ be the $i$-th fundamental module defined in \cite{Kas02}.
If we choose a function $o:I_0 \rightarrow \{1,-1\}$ such that $o(i) = -o(i-1)$ for $i\in [2,n]$, then 
$V(Y_{i,a})$ corresponds to the fundamental module $V(\varpi_i)_{o(i) (-1)^{a+1} (-q)^{a+n+1}}$ (see \cite[Remark 3.3]{Nak04A} and \cite[Remark 3.28]{HO19}).
Thus the fundamental modules $V(Y_{i,a})$ can be identified with $V(\varpi_i)_{(-q)^a}$ by shifting $(-q)^{n+1}$ with a suitable sign.
\end{remark}

\subsection{Hernandez-Leclerc categories} \label{Subsection: HL cat} 
We shall review briefly the Hernandez-Leclerc category $\catCO$ over the quantum affine algebra of type $A_n^{(1)}$.

Let $\Dynkin$ be the Dynkin diagram of type $A_n$, and let $\xi$ be a \emph{height function} of $\Dynkin$. 
The pair $\qQ = (\Dynkin, \xi)$ is called a \emph{$Q$-datum} of affine type $A_n^{(1)}$ (see \cite{FO21} for more details on $Q$-data). 
Let 
\begin{align*}
\crhI_{\Z} &:= \{ (i,a) \in I_0 \times \Z \mid i\in I_0, \  a \in 2\Z + \xi(i)  \}, \\
\crhI_{\Z_{\ge 0}} &:= \{ (i,a) \in I_0 \times \Z \mid i\in I_0, \  a \in 2\Z_{\ge0} + \xi(i)  \}.
\end{align*}
The \emph{Hernandez-Leclerc category} $\catCO$ (resp.\ $\catCP$) is defined to be the smallest full subcategory of $\catC_\g$ such that 
\bna
\item it  contains fundamental modules $V(Y_{i,a})$ for all $ (i,a) \in \crhI_\Z$ (resp.\ $ (i,a) \in 
 \crhI_{\Z_{\ge0}}$),
\item it is stable under taking subquotients, extensions and tensor products. 
\ee
To a $Q$-datum $\qQ$, one can associate a subset $\crhI_\qQ \subset \crhI_\Z$ such that there is a bijection 
$$ 
\theta_\qQ : \prD \buildrel \sim \over \longrightarrow  \crhI_\qQ
$$ satisfying certain conditions (see \cite{HL15} for details, and see also \cite{FO21} and \cite[Section 6.2]{KKOP23}).
The Hernandez-Leclerc category $\catCQ$ is defined to be the smallest full subcategory of $\catCO$ such that 
\bna
\item $\catCQ$  contains fundamental modules $V(Y_{i,a})$ for all $ (i,a) \in \crhI_\qQ$,
\item $\catCQ$ is stable under taking subquotients, extensions and tensor products. 
\ee
 Note that all simple modules of $\catCO$ (resp.\ $\catCP$, $\catCQ$) are parameterized by the set of dominant monomials in $Y_{i,a}$'s for $(i,a)\in \crhI_{\Z}$ (resp.\ $\crhI_{\Z_{\ge0}}$, $\crhI_{\qQ}$).
We denote by $K(\catCO)$ (resp.\ $K(\catCP)$, $K(\catCQ)$) the \emph{Grothendieck ring} of $ \catCO$ (resp.\ $ \catCP$, $ \catCQ$).

We define $\ddD_\qQ := \{ \Rt_i\}_{i\in I_0}$, where we set 
$$
\Rt_i := V(Y_{ \theta_\qQ(\al_i)} ) \qquad \text{ for $i\in I_0$.}
$$
It is shown in \cite[Proposition 6.5]{KKOP23} that $\ddD_\qQ$ becomes a \emph{complete duality datum} (see \cite[Definition 6.1]{KKOP23} for details).

\subsection{Bosonic extensions} \label{Sec: bosonic ext}

Let $\qT = \langle Y_{i,a} \mid (i,a) \in  \crhI_\Z \rangle$ be the \emph{quantum torus} over $\Z[t^{\pm 1/2}]$ associated with $\g_0$, and  for a simple module $L \in \catCO$, let $\chi_{q,t}(L) \in \qT$ be the \emph{$(q,t)$-character} of $L$, where $t$ is an indeterminate (see \cite{Her03, Her04, Nak04, VV03} and see also \cite[Section 5]{HL15} for details).

We define $K_t(\catCO)$ to be the $\Z[t^{\pm 1/2}]$-subalgebra of $\qT$ generated by $\chi_{q,t}(L)$ for all simple modules $L \in \catCO$. The algebra $K_t(\catCO)$ is called the \emph{quantum Grothendieck ring} of $\catCO$, which can be viewed as a $t$-deformation of the Grothendieck ring $K(\catCO)$. 
Define 
$$
\qK := \Q(t^{1/2}) \tens_{\Z[t^{\pm 1}]}  K_t(\catCO).
$$
It is shown in \cite{Nak04, VV03} that 
since $U_q'(\g)$ is of type $A_n^{(1)}$ 
when specializing at $t=1$, the $(q,t)$-characters correspond to the \emph{$q$-characters} introduced by Frenkel-Reshetikhin (\cite{FR99}).   
For the sake of simplicity, we denote by $[V]$ the $(q,t)$-character $\chi_{q,t}(V)$ of the simple module $V$. If no confusion arises,  we also use the same notation $[V]$ for the $q$-character of $V$ and regard it as an element of $K(\catCO)$.

We set 
$$ 
\qq := t^{-1}.
 $$
Let $\qAg$ be the $\Q(\qq^{1/2})$-algebra generated by the generators
$\{f_{i,p}\mid i\in I_0,p\in \Z\}$ with the defining relations:
\bna
\item 
$\sum_{k=0}^{1- \langle h_i,\al_j \rangle}(-1)^k\bnom{1-\langle{h_i,\al_j} \rangle\\k}_\qq
f_{i,p}^kf_{j,p}f_{i,p}^{1-\langle h_i,\al_j \rangle-k}=0$\quad for any $i,j\in I$ such that $i\not=j$ and $p\in \Z$,
\item 
$f_{i,m}f_{j,p}=\qq^{(-1)^{p-m+1}(\al_i,\al_j)}f_{j,p}f_{i,m}
+\delta_{(j,p),\,(i,m+1)}(1-\qq^2)$\quad if $m<p$.
\ee
The algebra $\qAg$ is called a \emph{Bosonic extension} of $U_\qq^-(\g_0)$. 
It is proved in \cite[Theorem 7.3]{HL15} that, for a given $Q$-datum $\qQ$, there is a $\Q(\qq^{1/2})$-algebra isomorphism  
\begin{align} \label{Eq: Phi_Q}
\Phi_\qQ: \qAg \buildrel \sim \over \longrightarrow \qK 
\end{align}
mapping $ f_{i,m} $ to $[ \dual^m (\Rt_i))]$ for any $i\in I_0$ and $m\in \Z$.
The algebra $\qAg$ has an action $T_i$ of the braid group $\bg_n$ associated with the Cartan matrix $\cmA$ (\cite[Theorem 2.3]{KKOP21} and see also \cite[Section 8.1]{JLO23B}). For any $i\in I_0$, the automorphism $T_i: \qAg \longrightarrow \qAg$ is defined by 
\begin{align} \label{Eq: T_i}
	T_i(f_{j,p}) \seteq
	\begin{cases}
		f_{j,p+\delta_{i,j}} & \text{ if } a_{i,j} \ge 0,\\
		\displaystyle \frac{   \sum_{r+s = 1}  (-1)^{ r} \qq^{ -1/2 + r} f_{i,p}^{(s)} f_{j,p} f_{i,p}^{(r)}   }{ \qq^{-1} - \qq} & \text{ if } a_{i,j} = -1.
	\end{cases}
\end{align}
Let $*: \qAg \buildrel \sim \over\longrightarrow \qAg$ be the $\Q(\qq^{1/2})$-algebra anti-automorphism defined by $f_{i,m} ^* = f_{i,-m}$ for any $i\in I_0$ and $m\in \Z$. Note that $T_i^* := * \circ T_i \circ * $ is the inverse of $T_i$ (\cite[Lemma 4.1]{OP24}).
Then, for each $i\in I_0$, the braid group actions $\bT_i^\qQ$ on $\qK$ is defined by the following commutative diagram:
\begin{equation} \label{Eq: T_iQ}
\begin{aligned} 
\xymatrix{
\qAg  \ar[d]_{T_i} \ar[rr]^{\Phi_\qQ} && \qK \ar[d]^{\bT^\qQ_i} \\ 
\qAg  \ar[rr]^{\Phi_\qQ} && \qK
}
\end{aligned}
\end{equation}

\subsection{Extended crystals} \label{Sec: extended crystal} 
In this subsection, we briefly review extended crystals. 
For the notion of classical crystals of $U_\qq(\g)$, we refer the reader to \cite{K91, K93, K95} and \cite[Chapter 4]{HK02}. 

\begin{definition}
	A \emph{crystal} is a set $B$ endowed with  maps $\wt\col  B \rightarrow \wlP$,  $\varphi_i$,  $\ep_i \col  B \rightarrow \Z \,\sqcup \{ -\infty \}$
	and $\te_i$, $\tf_i \col  B \rightarrow B\,\sqcup\{0\}$ 
 for all $i\in I$ which satisfy the following axioms: 
	\bna
	\item $\varphi_i(b) = \ep_i(b) + \langle h_i, \wt(b) \rangle$,
	\item $\wt(\te_i b) = \wt(b) + \alpha_i$ if $\te_i b\in B$, and 
$\wt(\tf_i b) = \wt(b) - \alpha_i$ if $\tf_i b\in B$, 
	\item for $b,b' \in B$ and $i\in I$, $b' = \te_i b$ if and only if $b = \tf_i b'$,
	\item for $b \in B$, if $\varphi_i(b) = -\infty$, then $\te_i b = \tf_i b = 0$,
	\item if $b\in B$ and $\te_i b \in B$, then $\ep_i(\te_i b) = \ep_i(b) - 1$ and $ \varphi_i(\te_i b) = \varphi_i(b) + 1$,
	\item if $b\in B$ and $\tf_i b \in B$, then $\ep_i(\tf_i b) = \ep_i(b) + 1$ and $ \varphi_i(\tf_i b) = \varphi_i(b) - 1$.
\ee
\end{definition}
Let $B(\infty)$ be the \emph{infinite crystal} of the negative half $U_{\qq}^-(\g_0)$. The $\Q(\q)$-algebra anti-involution $*: U_\qq(\g) \to  U_\qq(\g)$ induces the involution $*$ of $B(\infty)$. This involution $*$ gives another crystal functions $\ep_i^*$ and $\varphi_i^*$ and crystal operators
$$
\tf_i^* := * \circ \tf_i \circ *  \quad \text{ and } \quad \te_i^* := * \circ \te_i \circ *. 
$$
For any $i\in I$, we set 
\begin{align*}
	\iB := \{b\in \B \mid \ep_i(b) = 0  \} \quad \text{and} \quad \Bi := \{b\in \B \mid \eps_i(b) = 0  \}. 
\end{align*}
The \emph{Saito crystal reflections} (\cite{Saito94}) on the crystal $B(\infty)$ are defined as follows 
\begin{align*}
\cT_i(b) &:= \tf_i ^{ \vphi_i^*(b)} \te_i^{* \hskip 0.1em  \ep_i^*(b)}(b) \qquad \text{ for any } b\in \iB. 
\end{align*} 
The reflection $\cT_i$ is the crystal counterpart of \emph{Lusztig's braid symmetry} $T_{i,-1}'$ (\cite{L93}). 
We set $ \cTs_i = * \circ \cT_i \circ * $. 
Note that  $ \cTs_i $ is the inverse of $ \cT_i $.
Define  
\begin{align} \label{Eq: tSi}
\tcT_i	:= \cT_i ( \te_i^{\ep_i(b)} (b)  ).
\end{align}

\begin{example} \label{Ex: B(inf)}  \
\bni
\item 
A \emph{segment} is an interval $[a,b]$ for $1 \le a \le b \le n$, and a \emph{multisegment} is a multiset of segments. We denote by $\MS_n$ the set of all multisegments. We simply write $[a]$ for $[a,a]$.
For a multisegment $\bfm = \{ m_1, m_2, \ldots, m_k \} $, we write $\bfm  = m_1+m_2 + \cdots + m_k$. 
We denote the empty set $\emptyset$ by $1$.
It is known that $\MS_n$ has a crystal structure, which is isomorphic to $B(\infty)$ (see \cite{V01} for example). 
We refer the reader to the crystal description of $\MS_n$ in \cite[Section 7.1]{KP22}.
This realization is called the \emph{multisegment realization} of the infinite crystal $B(\infty)$.

\item Using the crystal description of $\MS_n$ in \cite[Section 7.1]{KP22},
it is easy to see that $\ep_i([a,b]) = \delta_{i,a}$, $\eps_i([a,b]) = \delta_{i,b}$ and 
\begin{align*}
\tf_i([a,b]) &= 
\begin{cases}
[a-1, b] & \text{if $i=a-1$,} \\ 
[i]+[a,b] & \text{otherwise,} 
\end{cases} \qquad
\te_i([a,b]) = 
\begin{cases}
1 & \text{if $i=a=b$,} \\ 
[a+1,b] & \text{if $i=a$ and $a<b$,} \\ 
0 & \text{otherwise.} 
\end{cases} \\ 
\tfs_i([a,b]) &= 
\begin{cases}
[a, b+1] & \text{if $i=b+1$,} \\ 
[i]+[a,b] & \text{otherwise,} 
\end{cases} \qquad
\tes_i([a,b]) = 
\begin{cases}
1 & \text{if $i=a=b$,} \\ 
[a,b-1] & \text{if $i=b$ and $a<b$,} \\ 
0 & \text{otherwise.} 
\end{cases}
\end{align*}
\ee

\end{example}

The \emph{extended crystal} of $B(\infty)$ is defined as 
\begin{align} \label{Eq: extended crystal}
	\cB(\infty) \seteq   \Bigl\{  (b_k)_{k\in \Z } \in \prod_{k\in \Z} B(\infty)  \bigm | b_k =\mathsf{1} \text{ for all but finitely many $k$}  \Bigr\},
\end{align}
where $\mathsf{1}$ is the highest weight element of $B(\infty)$.
We set $\one := (\mathsf{1})_{k\in \Z} \in \cB(\infty) $.
For any $\cb = (b_k)_{k\in \Z} \in \cB(\infty)$, $i\in I$ and $k\in \Z$, 
we define the \emph{extended crystal operator} $ \tF_{(i,k)} \col  \cB(\infty) \longrightarrow  \cB(\infty)$ on $\cB(\infty)$ 
by
\begin{equation*} 
\begin{aligned}
	\tF_{(i,k)}(\cb) & \seteq  
	\begin{cases}
		(\cdots , b_{k+2},  \  b_{k+1} , \ \tf_i( b_k), \ b_{k-1}, \cdots ) & \text{ if } \hep_{( i,k)} (\cb) \ge 0,\\
		(\cdots , b_{k+2},  \ \tes_i (b_{k+1} ), \ b_k, \ b_{k-1}, \cdots ) & \text{ if }  \hep_{( i,k)} (\cb) < 0 ,
	\end{cases}
\end{aligned}
\end{equation*}
where $ \hep_{(i,k)}(\cb) \seteq   \ep_{i}(b_k) - \eps_{i}(b_{k+1})$ (see \cite{KP22, P23} for details).  

For $i\in I$ and $\cb = (b_k)_{k\in \Z} \in \cB(\infty)$, we define 
\begin{align} \label{Eq: def of Ri}
R_i(\cb) = (b_k')_{k\in \Z}
\end{align}
by
\begin{align*}
b_k' := \tfss{ \ep_i(b_{k-1}) }_i  \left( \tcT_{i}(b_k) \right)
\end{align*}	
for any $k\in \Z$. Hence we have the map 
$R_i : \cB(\infty) \longrightarrow \cB(\infty). $

\begin{theorem} [{\cite[Lemma 3.2 and Theorem 3.4]{P23}}]  \
\bni
\item The maps $R_i$ are bijective.
\item The maps $R_i$ satisfy the braid  group relations for $ \bg_n$.
\ee
\end{theorem}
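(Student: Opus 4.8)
The plan is to establish the two assertions by reducing them to known facts about the Saito reflections $\cT_i$ and the braid group action $T_i$ on $\qAg$, together with a compatibility of $R_i$ with the extended crystal operators $\tF_{(i,k)}$. First I would prove bijectivity. The formula $b_k' = \tfss{\ep_i(b_{k-1})}_i(\tcT_i(b_k))$ expresses $R_i$ componentwise, but the components are coupled only through the single integer $\ep_i(b_{k-1})$. I would construct an explicit two-sided inverse: one checks that $\tcT_i = \cT_i \circ \te_i^{\ep_i(\scbul)}$ maps $\B$ bijectively onto $\Bi$ (this is the content of Saito's theory, since $\cT_i$ is a bijection $\iB \to \Bi$ and $\te_i^{\ep_i(\scbul)}$ retracts $\B$ onto $\iB$), and that applying $\tfss{m}_i$ for the appropriate $m = \ep_i(b_{k-1})$ is reversible once one knows $m$. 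The subtlety is that the value of $m$ used to modify the $k$-th component depends on the $(k{-}1)$-st component of the \emph{original} sequence $\cb$, not of $R_i(\cb)$; so I would show that $\ep_i(b_{k-1})$ can be recovered from $b_{k-1}' = \tfss{\ep_i(b_{k-2})}_i(\tcT_i(b_{k-1}))$ — concretely, $\tcT_i(b_{k-1}) \in \Bi$ has $\eps_i$ equal to zero, and $\tfss{m}_i$ raises $\eps_i$ by exactly $m$, so $\ep_i(b_{k-1})$ is read off as $\eps_i(b_{k-1}')$ after undoing the $\tcT_i$ part. This gives the inverse map and hence part (i).

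For part (ii), the braid relations, the cleanest route is to transport the problem through the isomorphism $\phi_\qQ : \cB(\infty) \isoto \sB(\g)$ and compare with $\bT_i^\qQ$, but since that isomorphism and $\bT_i$ are only introduced afterwards, I would instead argue intrinsically. I would show $R_i$ is a ``lift'' of $\cT_i$ in the following sense: the extended crystal $\cB(\infty)$ carries commuting families of operators, and $R_i$ is determined by its interaction with the $\tF_{(i,k)}$ and with a shift map. The key structural identity to isolate is a relation of the form $R_i \circ \tF_{(j,k)} = \tF_{(j',k')} \circ R_i$ (with indices governed by the Cartan matrix, exactly paralleling \eqref{Eq: T_i}): when $a_{i,j} \ge 0$ the operator $R_i$ intertwines $\tF_{(j,k)}$ with $\tF_{(j,k+\delta_{i,j})}$, and when $a_{i,j} = -1$ it intertwines in the twisted way. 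Granting such intertwining relations, the braid relations $R_iR_jR_i = R_jR_iR_j$ (for $a_{i,j}=-1$) and $R_iR_j = R_jR_i$ (for $a_{i,j}=0$) follow because both sides act the same way on the generator $\one$ (both fix it, or send it within a controlled finite piece) and commute with the $\tF$'s in matching patterns, and the $\tF$-operators generate enough of $\cB(\infty)$ to pin down a map that fixes $\one$.

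The main obstacle, and where the real work lies, is verifying the intertwining identity $R_i \tF_{(j,k)} = \tF_{(j',k')} R_i$, because $R_i$ is defined via the Saito reflection on each component while $\tF_{(j,k)}$ can mix adjacent components when $\hep_{(j,k)}$ changes sign. Concretely one must track how $\tcT_i$ and $\tfss{\ep_i(b_{k-1})}_i$ transform the values $\ep_j(b_k)$, $\eps_j(b_{k+1})$, and the crystal elements themselves, and this requires the commutation relations between $\cT_i$ and $\tf_j,\tes_j$ for neighboring $i,j$ — i.e.\ the crystal analogue of the rank-two braid computation. I expect to handle the case $a_{i,j}=0$ routinely (the reflections act on disjoint ``colors'' and the component shifts are by $\delta$-functions that don't interfere), and to reduce the case $a_{i,j}=-1$ to a finite, rank-two verification which I would either do by hand on the multisegment model of Example~\ref{Ex: B(inf)} or cite from \cite{P23, KP22}. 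I would close the argument by remarking that, since $T_i$ on $\qAg$ is already known to satisfy the braid relations and to descend to $\cT_i$ at the level of crystals (via \eqref{Eq: T_i} and the compatibility recorded before the theorem), part (ii) is consistent with and indeed mirrors the algebraic braid action, which serves as a sanity check on the combinatorial identities used.
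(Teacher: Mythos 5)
First, a point of context: the paper itself gives no proof of this statement --- it is quoted verbatim from \cite{P23} (Lemma 3.2 and Theorem 3.4 there) --- so your proposal must stand on its own rather than be compared with an argument in the text. For part (i) your strategy is the right one and essentially reconstructs the known inverse $R_i^{*}=\star\circ R_i\circ\star$, but two local statements need repair. The map $\tcT_i=\cT_i\circ\te_i^{\ep_i(\scbul)}$ is \emph{not} a bijection from $\B$ onto $\Bi$: it forgets the integer $\ep_i(b_k)$, and that is exactly why the definition of $R_i$ re-records this integer in the next slot. Accordingly, the shift amount needed to invert slot $k$ is read off from slot $k$ of the \emph{output}, not from $b'_{k-1}$ as you write: since $\tcT_i(b_k)\in\Bi$ has $\eps_i=0$ and $\tfs_i$ raises $\eps_i$ by one, one gets $\ep_i(b_{k-1})=\eps_i(b'_k)$ (your formula $\eps_i(b'_{k-1})$ computes $\ep_i(b_{k-2})$ instead), and then $b_k=\tf_i^{\,\eps_i(b'_{k+1})}\,\cTs_i\,\tes_i^{\,\eps_i(b'_k)}(b'_k)$ is a two-sided inverse. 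With that off-by-one fixed, part (i) is sound.

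Part (ii) has a genuine gap. The structural identity your whole argument rests on, $R_i\circ\tF_{(j,k)}=\tF_{(j',k')}\circ R_i$ with a \emph{single} extended crystal operator on the right, is false when $a_{i,j}=-1$, and the paper's own Example \ref{Ex: cB(inf)} already refutes it: $R_1\bigl(\tF_{(2,0)}\one\bigr)=R_1([2]_0)=[1,2]_0$, whereas $\tF_{(j',k')}(\one)=[j']_{k'}$ is always a one-box segment. (Algebraically this reflects that for $a_{i,j}=-1$ the element $T_i(f_{j,p})$ in \eqref{Eq: T_i} is a quantum commutator, i.e.\ the root vector attached to $\al_i+\al_j$, not another generator $f_{j',p'}$.) Consequently the ``both sides fix $\one$ and intertwine the $\tF$'s in matching patterns'' induction cannot be run: to compare $R_iR_jR_i$ and $R_jR_iR_j$ on all of $\cB(\infty)$ one would need to control $R_w\tF_{(l,k)}R_w^{-1}$ for the relevant words $w$, and since these conjugates are not extended crystal operators, establishing that control is essentially equivalent to the braid relation one is trying to prove. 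Your fallback of citing \cite{P23} for the remaining verification is circular, as \cite{P23} is the source of the theorem; an honest proof along your lines would have to formulate the correct (non-operator) compatibility between $R_i$ and the $\tF_{(j,k)}$ for adjacent $i,j$ and carry the multisegment computation through in full, and that computation --- the actual content of the result --- is not done in the proposal.
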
	
Note that the inverse of $R_i$ is given as $ R_i^* =  \star \circ R_i \circ \star $ (see \cite[Remark 3.3]{P23}).

\begin{example}  \label{Ex: cB(inf)} \

\bni
\item Define
$$
\hMS_n := \{ (\bfm_k)_{k\in \Z} \in \prod_{k\in \Z} \MS_n \mid \bfm_k=\emptyset \text{ for all but finitely many $k$} \}.
$$
Using the multisegment realization $\MS_n$ in Example \ref{Ex: B(inf)}, one can show that $\hMS_n$ has the extended crystal structure and $\hMS_n$ is isomorphic to $\cB(\infty)$. 

\item 
For any $b\in B(\infty)$ and $k\in \Z$, we define 
\begin{align*}
	\ie_k(b) = (b_t')_{t\in \Z} \in \cB(\infty),
\end{align*}	
where 
$
b_t'=
\begin{cases}
	b_k & \text{ if } t = k,\\
	1 & \text{ otherwise.}
\end{cases}
$
For a segment $[a,b]$ and $k\in \Z$, we set 
$$
[a,b]_k :=  \ie_k([a,b]).
$$
Using the crystal structure of $\hMS_n$ with the definition of $R_1$, we have 
$$ 
R_1  ([a,b]_k) = 
\begin{cases}
	[1]_{k+1} & \text{ if $a=b = 1$}, \\
 	[1]_{k+1}+[1,b]_k & \text{ if $a = 1$ and $b>1$}, \\
  	[1,b]_k  & \text{ if $a = 2$},  \\
	[a,b]_k  & \text{ otherwise}.
\end{cases}
$$	
\ee
	
\end{example}	

\subsection{Categorical crystals} \label{Sec: cat crystal} 
In the subsection, we briefly review the categorical crystal of $\catCO$. 
Recall that $U_q'(\g)$ is of affine type $A_n^{(1)}$.

Let $\ddD_\qQ := \{ \Rt_i\}_{i\in I_0}$ be the complete duality datum arising from a $Q$-datum $\qQ$.
Let $\sB(\g)$ be the set of the isomorphism classes of simple modules in $\catCO$.  
It is proved in \cite{KP22} that $\sB(\g)$ has the categorical crystal structure arising from $\ddD_\qQ$, which is defined by
$$
\ttF_{i,k} (M) \seteq   \hd \left( (\dual^k \Rt_i) \tens M) \right),  \qquad  \ttE_{i,k} (M) \seteq    \hd \left(M \tens (\dual^{k+1} \Rt_i) \right)
$$
for $ M \in \sB(\g)$, $ i\in I_0$, $k\in \Z$ and that $\sB(\g)$ is isomorphic to the extended crystal $\cB(\infty)$ of finite type $A_n$. Here, $\hd(U)$ denotes the \emph{head} of $U$.

 For a simple  module $V \in \catCO$, we write $[V] \in \sB(\g)$ for the isomorphism class of $V$. We sometimes regard $\sB(\g)$ as a subset of $\qK$ via the $(q,t)$-characters.
The extended crystal isomorphism is given as
\begin{align} \label{Eq: phiQ}
\phi_\qQ : \cB(\infty) \buildrel \sim \over \longrightarrow \sB(\g)
\end{align}
sending $ \tF_{i,k} (\one)$ to the equivalence class $\ttF_{i,k} (\one) =  [ \dual^k( \Rt_i)]$ for any $i\in I_0$ and $k\in \Z$.
Then, for each $i\in I_0$, the braid group actions $\bR_i^\qQ$ on $\sB(\g)$ is defined by the following commutative diagram:
\begin{equation} \label{Eq: R_iQ}
\begin{aligned} 
\xymatrix{
\cB(\infty)  \ar[d]_{R_i} \ar[rr]^{\phi_\qQ} && \sB(\g) \ar[d]^{\bR^\qQ_i} \\ 
\cB(\infty)  \ar[rr]^{\phi_\qQ} && \sB(\g)
}
\end{aligned}
\end{equation}

\vskip 2em

\section{Explicit descriptions for type \texorpdfstring{$A_n$}{An}} \label{Sec: ed An}

In this section, we choose a special $Q$-datum and investigate relevant notations and properties.
This choice of the $Q$-datum will be used to compare with the braid group actions in Grassmannian.

Let $U_q'(\g)$ be the quantum affine algebra of type $A_{n}^{(1)}$. We set 
\begin{align} \label{Eq: An Q-datum}
\qQ := (\Dynkin, \xi),
\end{align}
where $\Dynkin$ is the Dynkin diagram of finite type $A_{n}$ and $\xi: I_0 \rightarrow \Z$ is the height function defined by $\xi(i) \seteq  i-1 $ for $i\in I_0$. 
Note that the $Q$-datum $\qQ$ is the same as in \cite[Section 7.2]{KP22}. 
The sets $\crhI_\Z$ and $\crhI_\qQ$ defined in Section \ref{Subsection: HL cat} are given as follows:
\begin{equation} \label{Eq: S S+ SQ}
\begin{aligned}
\crhI_\Z &\seteq  \{ (i, a) \in I_0 \times \Z \mid   a-i \equiv 1 \bmod 2  \}, \\ 
\crhI_{\Z_{\ge0}} &\seteq  \{ (i, a) \in \crhI_\Z \mid   a \ge i-1  \}, \\ 
\crhI_\qQ & \seteq  \{ (i, a) \in \crhI_\Z  \mid  i-1 \le a \le 2n-i-1    \}.
\end{aligned}
\end{equation}
For any $(i,a) \in \crhI_\Z$,  we define 
\begin{align*}
\cdual(i,a) \seteq  (n+1 - i, a+n+1) \quad \text{ and } \quad  \cdual^{-1}(i,a) \seteq  (n+1 - i, a-n-1),
\end{align*}
and set $ \cdual (A)\seteq \{ \cdual (i,a) \mid (i,a) \in A  \}$ for a subset $A \subset \crhI_\Z$.
Note that 
$$
\crhI_{\Z_{\ge0}} = \bigcup_{k\in \Z_{\ge0}} \cdual^k (\crhI_\qQ) \quad \text{ and } \quad  \crhI_{\Z} = \bigcup_{k\in \Z} \cdual^k (\crhI_\qQ). 
$$

The corresponding Hernandez-Leclerc category $\catCO$ (resp.\ $\catCP$, $\catCQ$) is determined by the fundamental modules $ V(Y_{i,a}) $ for all $(i,a) \in \crhI_\Z$ (resp.\ $ \crhI_{\Z_{\ge0}}$, $ \crhI_\qQ$). Note that 
\begin{align} \label{Eq: dual to cdual} 
[\dual (V(Y_{i,a}))] = [V(Y_{\cdual(i,a)})]
\end{align}
(see \eqref{Eq: dual V}).

\begin{remark}
We keep the notations appearing in Remark \ref{Rmk: V(varpi)}.
We set $o(1)=-1$ and $o(i) = -o(i-1)$ for $i\in [2,n]$. Under the choice of the $Q$-datum $\qQ$ given in \eqref{Eq: An Q-datum}, 
the module $ V(Y_{i,a})$ coincides with $V(\varpi_i)_{(-q)^a}$ by shifting $(-q)^{n+1}$.
\end{remark}

\begin{example} \label{Ex: pic for crhI}
Let $n=3$. The set $\crhI_{\Z_{\ge0}}$ can be drawn pictorially as follows.
$$ 
\scalebox{0.8}{\xymatrix@C=1ex@R=  0.0ex{ 
i \diagdown\, a     \ar@{-}[dddd]<3ex> \ar@{-}[rrrrrrrrrrrrrrrr]<-2.0ex>    & -1 & 0 & 1 & 2 & 3 & 4 & 5 & 6 & 7 & 8 &9&10 & 11& 12 & & &   \\
1       & &  \ast & & \ast  &  & \ast  & &  \bullet & & \trg && \trg && \trg   \\
2     &  & & \ast & & \ast &  &  \bullet  & &  \bullet  && \trg && \trg  \\
3      & &  & & \ast & &  \bullet  & &  \bullet && \bullet && \trg && \cdot \\
& 
}}
$$	
The elements of $\crhI_\qQ$ (resp.\ $\cdual(\crhI_\qQ$), $\cdual^2(\crhI_\qQ$)) are marked in $\ast$ (resp.\ $\bullet$, $\trg$). 
Note that the fundamental modules $V(Y_{i,a})$ in $\catCQ$ (resp.\ $ \dual (\catCQ)$, $\dual^2 (\catCQ)$) are indexed by $(i,a)$ placed at $\ast$ (resp.\ $\bullet$, $\trg$). 
\end{example}

The complete duality datum  $\ddD$ arising from the $Q$-datum $\qQ$ is given as 
\begin{align} \label{Eq: dd from Q}
\ddD = \{ \Rt_i\}_{i\in I_0},
\qt{where $\Rt_i \seteq   V(Y_{1, 2i-2}) $ for $i\in I_0$.}
\end{align}
We simply write $ \Phi := \Phi_\qQ$, where $ \Phi_\qQ $ is given in \eqref{Eq: Phi_Q}. 
Note that 
\begin{align} \label{Eq: Phi fim}
\Phi(f_{i,m}) =  [ \dual^m( \Rt_i) ] = [V(Y_{ \cdual^m(1, 2i-2)})] \qquad \text{ for any $i\in I_0$ and $m\in \Z$.} 
\end{align}

\subsection{Multisegment realizations}
We choose the reduced expression 
$$
\rxw_0 = (s_1) (s_2s_1) (s_3s_2s_1) \cdots (s_n \cdots s_2s_1)
$$
of the longest element $w_0$ in the symmetric group $\mathfrak{S}_{n+1}$, and set $\alpha_{a,b} := \al_a + \al_{a+1} + \cdots + \al_{b}$ for $1 \le a \le b\le n$. 
Then the corresponding \emph{dual PBW vector} $ E^*(\al_{a,b})$ belongs to the \emph{upper global basis} (or \emph{dual canonical basis}) and matches with the segment $[a,b]$ in the infinite crystal $B(\infty)\simeq \MS_n$ (see \cite[Section 7.2]{KP22}).

Recall the multisegment realizations $\hMS_n$ of $\cB(\infty)$ given in Example \ref{Ex: cB(inf)}.
We identify $\hMS_n$ with $\cB(\infty)$.

Let  $\sB(\g)$ be  the categorical crystal  associated with the $Q$-datum $\qQ$ (see Section \ref{Sec: cat crystal}). 
We simply write $\bR_i := \bR_i^\qQ$, where $\bR_i^\qQ$ is the Braid group action on $\sB(\g)$  given in \eqref{Eq: R_iQ}.
The complete duality datum \eqref{Eq: dd from Q} induces an isomorphism 
\begin{align} \label{Eq: [a,b]}
\phi:= \phi_\qQ: \hMS_n \buildrel \sim \over \longrightarrow \sB(\g), \qquad 	[a,b]_k \mapsto [V(Y_{\pk_{[a,b]_k}})],
\end{align}
where $\pk_{[a,b]_k} := \cdual^k (b-a+1, a+b-2)$ (see \eqref{Eq: phiQ}).
Then we have 
$ \phi(\bfm) = [V(\bfm)]$
where 
\begin{align} \label{Eq: Vm}
[V(\bfm)] := \left[V \left( \prod_{s}  Y_{\pk_{[a_s, b_s]_{k_s}}}^{c_s} \right) \right] \qquad \text{for any $ \bfm =  \sum_{s} c_s [a_s, b_s]_{k_s} \in \hMS_n  $}
\end{align}
by \cite[Theorem 6.1]{KKOP23} (see also \cite[Theorem 5.10, Section 7.2]{KP22}).

\begin{example} \label{Ex: pic for [a,b]}
We continue Example \ref{Ex: pic for crhI}.
Pictorially, the elements $[a,b]_k$ are drawn at the position $ \pk_{[a,b]_k}$ as follows:
$$ 
\scalebox{0.7}{\xymatrix@C=1ex@R=  0.0ex{ 
		i \diagdown\, a     \ar@{-}[dddd]<3ex> \ar@{-}[rrrrrrrrrrrrrrrr]<-2.0ex>    & -1 & 0 & 1 & 2 & 3 & 4 & 5 & 6 & 7 & 8 &9&10 & 11& 12 & & &   \\
		1       & &  [1]_0 & & [2]_0  &  & [3]_0  & &  [1,3]_1 & & [1]_2 && [2]_2 && [3]_2   \\
		2     &  & & [1,2]_0 & & [2,3]_0 &  &  [1,2]_1  & & [2,3]_1  && [1,2]_2 && [2,3]_2  \\
		3      & &  & & [1,3]_0 & &  [1]_1  & &  [2]_1 && [3]_1 && [1,3]_2 && \cdot \\
		& 
}}
$$	
\end{example}

\subsection{Braid group actions}
Recall the braid group actions $T_i$ and $\bT_i^\qQ$ defined in \eqref{Eq: T_iQ}.
We simply write $\bT_i := \bT_i^\qQ$.

For any $k\in \Z$, we define the \emph{spectral parameter shift}
\begin{align} \label{Eq: sps}
\shift_k : \qK \buildrel \sim \over \longrightarrow  \qK
\end{align}
by $ \shift_k ( Y_{i,a}) = Y_{i,a + 2k} $ for any $(i,a) \in \crhI_\Z$. It is easy to see that 
\begin{align} \label{Eq: Sh T= T Sh}
   \bT_i \circ \shift_{n+1}  = \shift_{n+1} \circ \bT_i \qquad \text{ for any $i\in I_0$.}
\end{align}

\begin{lemma} \label{Lem: Ti respects B}
Let $ C = T_1 T_2 \cdots T_n$.
\bni
\item  For any $k\in I_0$, we have $ T_k = C^{k-1} T_1 C^{1-k} $.
\item We have $ \Phi \circ C = \shift_1 \circ \Phi $, i.e., 
$$
\xymatrix{
	\qAg  \ar[d]_{C} \ar[rr]^{\Phi} && \qK  \ar[d]^{\shift_1} \\
	\qAg  \ar[rr]^{\Phi } && \qK. 
}
$$
In particular, $ \bT_k = \shift_{k-1} \circ \bT_1 \circ \shift_{1-k} $ for $k\in I_0$.
\item \label{Lem: Ti respects B (iii)}
For any $i\in I_0$ and $[V] \in \sB(\g)$,  we have 
$$\bT_i ([V])  \in  \sB(\g),$$
where we regard $\sB(\g)$ as a subset of $\qK$.
\ee
\end{lemma}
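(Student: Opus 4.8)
The plan is to prove the three parts in order, with parts (i) and (ii) serving as the preparation for the key statement (iii). For part (i), I would argue purely at the level of the braid group $\bg_n$: the relation $T_k = C^{k-1}T_1C^{1-k}$ with $C = T_1T_2\cdots T_n$ is a standard identity in the braid group of type $A_n$, which can be verified by induction on $k$ using the braid relations $T_jT_{j+1}T_j = T_{j+1}T_jT_{j+1}$ and the commutation relations $T_jT_\ell = T_\ell T_j$ for $|j-\ell|>1$. Concretely, one shows $C T_k C^{-1} = T_{k+1}$ for $1 \le k \le n-1$ by sliding $T_k$ leftward past the commuting factors $T_{k+2}\cdots T_n$, then absorbing it into the adjacent $T_{k+1}T_k$ via the braid relation, then sliding the resulting $T_{k+1}$ back out; iterating gives (i).

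For part (ii), I would check the identity $\Phi \circ C = \shift_1 \circ \Phi$ by evaluating both sides on the algebra generators $f_{i,m}$ of $\qAg$. Using the explicit formula \eqref{Eq: T_i} for the $T_i$ together with the duality-shift description \eqref{Eq: Phi fim}, namely $\Phi(f_{i,m}) = [V(Y_{\cdual^m(1,2i-2)})]$, one computes $C(f_{i,m})$ and compares with $\shift_1(\Phi(f_{i,m})) = \shift_1([V(Y_{\cdual^m(1,2i-2)})])$. Here the key combinatorial input is how the composite $C = T_1\cdots T_n$ acts on the indexing data: on the "generic" generators (those with $a_{i,j}\ge 0$ at each stage) each $T_j$ merely shifts the second index of $f_{j,\bullet}$ by one, and the cumulative effect of $C$ on $f_{i,m}$ should match the spectral shift $Y_{i,a}\mapsto Y_{i,a+2}$ after translating through $\Phi$; the cases involving the $a_{i,j}=-1$ branch of \eqref{Eq: T_i} will need the corresponding relations among dual PBW vectors / global basis elements. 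The "in particular" clause then follows by conjugating part (i): applying $\Phi$ to $T_k = C^{k-1}T_1C^{1-k}$ and using the commuting square gives $\bT_k = \shift_1^{k-1}\circ \bT_1 \circ \shift_1^{1-k} = \shift_{k-1}\circ \bT_1 \circ \shift_{1-k}$.

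For part (iii), the statement is that $\bT_i$ preserves the subset $\sB(\g)\subset \qK$ of classes of simple modules. By part (ii) and the fact that the spectral shift $\shift_k$ clearly permutes $\sB(\g)$ (it just relabels fundamental modules, hence permutes dominant monomials and thus simple modules), it suffices to treat $i=1$. For $\bT_1$ I would invoke the compatibility, recorded in the discussion preceding the lemma and established in \cite{KKOP24A, KKOP24C, KKOP24B}, between the braid action $T_1$ on $\qAg$ and the extended-crystal reflection $R_1$ on $\cB(\infty)$: under $\Phi_\qQ$ and $\phi_\qQ$, the operator $T_1$ sends the global basis element attached to $\cb\in\cB(\infty)$ to (a scalar multiple of) the global basis element attached to $R_1(\cb)$, and $R_1$ is a bijection of $\cB(\infty)$ by \cite[Theorem 3.4]{P23}. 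Since global basis elements of $\qAg$ correspond under $\Phi_\qQ$ to $(q,t)$-characters of simple modules, i.e. to elements of $\sB(\g)$, this shows $\bT_1(\sB(\g)) \subseteq \sB(\g)$, and bijectivity of $R_1$ (equivalently, invertibility of $T_1$ via $T_1^*$) gives equality.

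The main obstacle I anticipate is part (ii): one must carry out the explicit computation of $C(f_{i,m})$ using \eqref{Eq: T_i}, correctly tracking the interplay between the two cases in the formula for $T_j$ and the reindexing by $\cdual$, and confirm it matches the spectral shift on the nose. Parts (i) and (iii) are comparatively formal once the relevant cited results are in hand, but the bookkeeping in (ii) — especially handling which generators fall into the $a_{i,j}=-1$ branch at each of the $n$ successive applications — is where care is needed.
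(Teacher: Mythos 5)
Your overall strategy matches the paper's for parts (i) and (ii): part (i) is exactly the ``direct computation'' in the braid group that the paper leaves to the reader, and for part (ii) the paper also verifies $\Phi\circ C=\shift_1\circ\Phi$ on the generators $f_{i,m}$. The bookkeeping you flag as the main obstacle is resolved in the paper by two shortcuts you would need to supply: for $i\ne n$ one has the closed-form identity $T_iT_{i+1}(f_{i,m})=f_{i+1,m}$ (cited from \cite[Example 5.10]{OP24}), which together with the fact that $T_j$ fixes $f_{i,m}$ for $|i-j|>1$ gives $C(f_{i,m})=f_{i+1,m}$ at once; for $i=n$ the paper uses the dual PBW relation $(1-\qq^2)E^*(\al_{1,k+1})=E^*(\al_{1,k})E^*(\al_{k+1})-\qq E^*(\al_{k+1})E^*(\al_{1,k})$ and the identification $\Phi(T_1\cdots T_{k-1}(f_{k,0}))=[V(Y_{k,k-1})]$ to reduce to a shift by the right dual functor. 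This is exactly the ``relations among dual PBW vectors'' you anticipated, so your plan for (ii) is sound, just not carried out.

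Where you genuinely diverge is part (iii). The paper first reduces to $i=1$ via (i) and (ii) (as you do), but then invokes \cite[Proposition 6.2]{FHOO23}, which applies precisely because $1$ is a \emph{sink} of the chosen $Q$-datum $\qQ$ (the height function $\xi(i)=i-1$ makes $1$ the unique sink); this is a comparatively light input. You instead invoke the global-basis compatibility of $T_i$ from \cite{KKOP24A,KKOP24C}, i.e.\ that $T_i$ permutes the global basis of $\qAg$ and hence the $(q,t)$-characters of simples. That route is valid --- the paper itself records it in Remark \ref{Lem: Ti respects B} (the remark immediately following the lemma) as an alternative proof --- but it uses strictly heavier machinery, and if you take it you do not even need the reduction to $i=1$. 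One caution: you should phrase the input as ``$T_i$ permutes global basis elements up to scalar'' rather than as the identity $\bT_i=\bR_i$ on $\sB(\g)$, since the latter statement (Theorem \ref{Thm: Ti=Ri} in the paper) is only formulated after Lemma \ref{Lem: Ti respects B} (iii) guarantees that $\bT_i$ restricts to a permutation of $\sB(\g)$; leaning on the full $\bT_i=\bR_i$ statement here would make the exposition circular even though the underlying cited result is independent.
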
	
\begin{proof}
(i) It follows from a direct computation.

(ii) Let $m\in \Z$ and $i\in I_0$.

Suppose that $i \ne n$. It is easy to see that  $T_{i} T_{i+1} (f_{i,m}) = f_{i+1, m}$ (see \cite[Example 5.10]{OP24} for example).	By \eqref{Eq: Phi fim}, we have
$$
\Phi\circ C(f_{i,m}) = \Phi (f_{i+1,m}) = [V(Y_{ \cdual^m(1, 2i)})] =  \shift_1( [V(Y_{ \cdual^m(1, 2i-2)})]) = \shift_1\circ \Phi(f_{i,m}).
$$

Suppose that $i=n$. 
Note that 
$$ (1-\qq^2)E^*(\al_{1,k+1}) = E^*(\al_{1,k}) E^*(\al_{k+1}) - \qq E^*(\al_{k+1} ) E^*(\al_{1,k}). 
$$
(see \cite[Theorem 4.7]{BKM14} for example).
Let $M_k := T_1 T_2 \cdots T_{k-1}(f_{k,0})$. Since $M_k$ corresponds to $[1,k]_0$ in the extended crystal, the isomorphism \eqref{Eq: [a,b]} tells us that 
$$ 
\Phi(M_k) =  \Phi( T_1T_2 \cdots T_{k-1} (f_{k,0}) ) = [V(Y_{k, k-1})]. 
$$
Thus, it follows from \eqref{Eq: dual to cdual} and \cite[Lemma 4.1 (i)]{OP24} that 
\begin{align*}
\Phi\circ C(f_{n,m}) &= \Phi (T_1T_2 \cdots T_{n-1} (f_{n, m+1})) =   \dual^{m+1} [V(Y_{n, n-1})] \\
& = [V(Y_{ \cdual^{m+1} ( n, n-1)})] = [V(Y_{ \cdual^{m} ( 1, 2n)})] = \shift_1( [V(Y_{ \cdual^{m} ( 1, 2n-2)})]) \\
&= \shift_1\circ \Phi(f_{n,m}).
\end{align*}

(iii) Let $[V] \in \sB(\g)$. It is obvious that $ \shift_1([V]) \in  \sB(\g) $. Since $1$ is a sink in the $Q$-datum $\qQ$, we have  $ \bT_1([V]) \in  \sB(\g) $ by \cite[Proposition 6.2]{FHOO23}. Thus, (i) and (ii) give the assertion.
\end{proof}	

\begin{remark}
Lemma \ref{Lem: Ti respects B} (iii) can be induced from \cite[Theorem 7.3]{KKOP24A} in terms of global basis of $\qAg$. 
\end{remark}

\begin{definition} \
\bna
\item An element $b\in \sB(\g)$  is called \emph{real} if $b^2$ is contained in $\sB(\g)$. 
\item An element $b\in \sB(\g)$ is called \emph{imaginary} if $b$ is not real.
\item An element $b\in \sB(\g)$ is called \emph{prime} if $b$ cannot be expressed as a product of elements of $\sB(\g)$.
\ee
\end{definition}

We then have the following by Lemma \ref{Lem: Ti respects B} (iii). 

\begin{corollary} \label{Cor: real}
Let $b\in \sB(\g)$ and $i\in I$.
\bni
\item If $b$ is real then so is $\bT_i(b)$.
\item If $b$ is imaginary then so is $\bT_i(b)$.
\item If $b$ is prime then so is $\bT_i(b)$.
\ee
\end{corollary}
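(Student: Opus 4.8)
The plan is to deduce Corollary~\ref{Cor: real} directly from Lemma~\ref{Lem: Ti respects B}~(iii), which guarantees that $\bT_i$ restricts to a map $\sB(\g)\to\sB(\g)$. The key additional input is that each $\bT_i$ is a \emph{ring automorphism} of $\qK$ (it is induced via the isomorphism $\Phi_\qQ$ from the algebra automorphism $T_i$ of $\qAg$), and that its inverse $\bT_i^{-1}$ is again of the same form — namely the one induced by $T_i^* = *\circ T_i\circ *$, which by \cite[Lemma 4.1]{OP24} is the inverse of $T_i$. Since $*$ on $\qAg$ transports (under $\Phi_\qQ$) to an operation preserving the set of $(q,t)$-characters of simple modules — equivalently, $\sB(\g)$ is $*$-stable — the same argument as in Lemma~\ref{Lem: Ti respects B}~(iii) applies verbatim to show $\bT_i^{-1}(\sB(\g))\subseteq\sB(\g)$ as well. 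Hence $\bT_i$ is a bijection of $\sB(\g)$ onto itself that is multiplicative.

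With that in hand, each of the three items is immediate. For (i): if $b$ is real, then $b^2\in\sB(\g)$, and since $\bT_i$ is a ring homomorphism, $\bT_i(b)^2 = \bT_i(b^2)\in\sB(\g)$ by item (iii) of the lemma, so $\bT_i(b)$ is real. For (iii): suppose $\bT_i(b)$ were not prime, i.e. $\bT_i(b) = c_1 c_2$ with $c_1,c_2\in\sB(\g)$; applying $\bT_i^{-1}$ (which is multiplicative and preserves $\sB(\g)$) gives $b = \bT_i^{-1}(c_1)\,\bT_i^{-1}(c_2)$, a factorization of $b$ into elements of $\sB(\g)$, contradicting primeness of $b$. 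Item (ii) is then formal: "imaginary" is by definition the negation of "real", and since $\bT_i$ is invertible on $\sB(\g)$, if $\bT_i(b)$ were real then $b = \bT_i^{-1}(\bT_i(b))$ would be real by applying (i) to $\bT_i(b)$ with the inverse braid automorphism $\bT_i^{-1}$; contrapositively, $b$ imaginary forces $\bT_i(b)$ imaginary.

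There is no real obstacle here; the only point requiring a little care is the invertibility of $\bT_i$ as a self-map of $\sB(\g)$ — one must confirm that $\bT_i^{-1}$ also preserves $\sB(\g)$, not merely that $\bT_i$ does. This follows because $\bT_i^{-1}$ is itself one of the braid operators associated to the inverse braid word, or equivalently because the star involution is compatible with the setup; either way it is covered by Lemma~\ref{Lem: Ti respects B}~(iii) applied appropriately (after conjugating by $*$, noting $1$ remains a sink). One should also record that $\bR_i$ satisfies the analogous statement, but that is immediate from the commuting square \eqref{Eq: R_iQ} identifying $\bR_i$ with $\bT_i$ through the crystal isomorphism $\phi_\qQ$, so no separate argument is needed.
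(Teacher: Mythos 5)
Your proof is correct and follows the same (one-line) route as the paper, which deduces the corollary directly from Lemma~\ref{Lem: Ti respects B}~(iii) together with the fact that $\bT_i$ is a ring automorphism of $\qK$. The point you rightly single out --- that $\bT_i^{-1}$ must also preserve $\sB(\g)$, which is genuinely needed for items (ii) and (iii) and is left implicit in the paper --- can be justified either as you do via $T_i^* = *\circ T_i\circ *$, or more directly from the identification $\bT_i=\bR_i$ on $\sB(\g)$ (Theorem~\ref{Thm: Ti=Ri}) combined with the bijectivity of $R_i$ on the extended crystal.
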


\vskip 2em 

\section{Infinite Grassmannian cluster algebras} \label{Sec: igca}

In this section, we define braid group actions on the infinite Grassmannian cluster algebra by extending the braid group actions introduced by Fraser \cite{Fra20} and investigate several properties.  

We fix $m \in \Z_{>1}$ throughout this section.
Define 
\begin{align*}
\cvC{m}&:= \{  (i_1,i_2, \ldots, i_m) \in \Z^{ m} \mid i_1 < i_2 < \ldots < i_m \}, \\
\cvF{m}&:= \{ (i, i+1 \ldots, i+m-1 ) \in \Z^{m} \mid i\in \Z \}. 
\end{align*}
For any $\bfi = (i_1,i_2, \ldots i_m)\in \Z^{ m}$,  $t \in \Z$ and $a \in \Z\setminus\{0\}$,
we define 
$$
\bfi +t := (i_1 + t, i_2 + t, \ldots, i_m + t )\quad  \text{and} \quad a\bfi := (a i_1, a i_2 , \ldots, a i_m ).
$$  
Note that $\cvC{m}$ and $\cvF{m}$ are closed under addition by $t\in \Z$. For an interval $[k,l] \subset \Z$ and $\bfi = (i_1, i_2, \ldots, i_m)\in \cvC{m}$, we simply write $ \bfi \subset [k,l] $ if $ \{ i_1, i_2, \ldots, i_m \} \subset [k,l] $. 

For each $\bfi = (i_1, i_2, \ldots, i_m) \in \cvC{m}$, we set $ \vP_\bfi = \vP_{i_1, i_2, \ldots, i_m} $ to be an indeterminate, and define $ \vP_{i_{w(1)}, i_{w(2)}, \ldots, i_{w(m)}  } := (-1)^{\ell(w)} \vP_{i_1,i_2, \ldots, i_m}$ for any $w \in \sg_{m}$. 
For any $\bfj = (j_1, j_2, \ldots, j_m) \in \Z^{ m}$, we understand $ \vP_{j_1, j_2, \ldots, j_m} = 0$ if there exist $ s,t \in [1,m]$ such that $s \ne t$ and $j_s = j_t$.  

We define $\widetilde{A} := \C[\vP_\bfi, \ \vP_\bfj^{-1} \mid \bfi \in \cvC{m},\ \bfj \in \cvF{m}]$ to be the algebra obtained from the polynomial algebra $\C[\vP_\bfi \mid \bfi \in \cvC{m}]$ by localizing the indeterminates $\vP_\bfj$ for any $\bfj \in \cvF{m}$. 
We define $\iGr_m$ to be the quotient algebra of $\widetilde{A}$ by the ideal generated by
\begin{align} \label{Eq: Plucker}
\sum_{t=1}^{m+1} (-1)^t \vP_{i_1, \ldots, i_{m-1}, j_t} \vP_{j_1, \ldots, \widehat{j_{t}}, \ldots,  j_{m+1}}
\end{align}
for any $\bfi = (i_1,i_2, \ldots, i_{m-1} ) \in \cvC{m-1}$ and $\bfj = (j_1,j_2, \ldots, j_{m+1} ) \in \cvC{m+1}$.
Denote by $\biGr_m$  the quotient of $\iGr_m$ by the ideal generated by 
$ \vP_{\bfi} - 1 $ for all $\bfi \in \cvF{m}$.
For an interval $[k,l]$, we define $\iGr_m^{[k,l]}$ (resp.\ $\biGr_m^{[k,l]}$) to be the subalgebra of $\iGr_m$ (resp.\ $\biGr_m$) generated by $\vP_\bfi$ for all $\bfi \in \cvC{m}$  with $\bfi \subset [k,l]$. We set 
\begin{align*}
\iGr_m^{N} := \iGr_m^{[1,N]}, \qquad   \iGr_m^+ := \iGr_m^{[1, \infty]},  \qquad \biGr_m^{N} := \biGr_m^{[1,N]} \quad \text{and} \quad   \biGr_m^+ := \biGr_m^{[1, \infty]}
\end{align*}
for any $N \in \Z_{>0}$.  
By the definition, for each $t\in \Z$, we have the $\C$-automorphism  
\begin{align} \label{Eq: auto sh}
\sh_t:\iGr_m \buildrel \sim \over \longrightarrow \iGr_m 
\end{align}
defined by 
$ \vP_{\bfi} \mapsto \vP_{\bfi+t} $ for any $\bfi \in \cvC{m}$. 
Note that $\sh_t$ is well-defined on the quotient $\biGr_m$.

\subsection{Grassmannians}  
Denote by $\tGr(m,N)$ the \emph{affine cone} over the Grassmannian of $m$-dimensional subspaces in $\C^N$ (see \cite[Section 4.1.5]{LR08} and see also \cite[Section 3]{Fra20}). 
We define $\tdGr(m,N) $ to be the Zariski-open subset of $\tGr(m,N)$ cut out by the vanishing of the \emph{Pl\"{u}cker coordinates} $\Delta_{\bfi}$ for any $\bfi \in \cvF{m}$ with $ \bfi \subset [1,N] $. Similarly, we also define $\toGr(m,N)$ to be the Zariski-open subset of $\tGr(m,N)$ cut out by the vanishing of $\Delta_{\bfi}$ for any  $\bfi = (i,i+1,\ldots, i+m-1) \pmod N$ with $ \bfi \subset [1,N] $.
Note that $\toGr(m,N)$ is the same as that in \cite[Section 3]{Fra20}. 
The coordinate ring $\C[\tdGr(m,N)]$ (resp.\ $\C[\toGr(m,N)]$) is the localization of the coordinate ring $\C[\tGr(m,N)]$ at the elements $\Delta_{\bfi}$ for any $\bfi \in \cvF{m}$ (resp.\ $\bfi = (i,i+1,\ldots, i+m-1) \pmod N$) with $ \bfi \subset [1,N] $. Note that $\C[\tdGr(m,N)]$ can be regarded as a subalgebra of $\C[\toGr(m,N)]$.

\begin{lemma} \label{Lem: phimN}
For any $ m, N \in \Z_{> 0} $ with $m < N$, there is an algebra isomorphism
$$
\psi_{m, N} : \C[\tdGr(m, N)]  \buildrel \sim \over \longrightarrow \iGr_m^{N}
$$
mapping $ \Delta_\bfi$ to $\vP_\bfi$ for any $\bfi \in \cvF{m}$ with $\bfi \in [1,N]$.
\end{lemma}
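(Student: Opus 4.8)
The plan is to identify the coordinate ring $\C[\tGr(m,N)]$ with the subalgebra of $\C[\vP_\bfi \mid \bfi\in\cvC{m},\ \bfi\subset[1,N]]$ cut out by the Plücker relations, and then check that localizing at the frozen-type coordinates $\Delta_\bfi$, $\bfi\in\cvF{m}$, matches the localization defining $\iGr_m^N$ as a subalgebra of $\widetilde A$. Concretely, the homogeneous coordinate ring of the Grassmannian of $m$-planes in $\C^N$ is the Plücker algebra: it is $\C[\Delta_\bfi \mid \bfi]/(\text{Plücker relations})$, where the indexing runs over $m$-subsets of $[1,N]$ and the relations are exactly those in \eqref{Eq: Plucker} restricted to $\bfi\subset[1,N]$ (this is the classical first fundamental theorem / standard monomial theory for Grassmannians, e.g.\ \cite[Section 4.1.5]{LR08}). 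So first I would write down the surjection $\C[\vP_\bfi \mid \bfi\in\cvC{m},\ \bfi\subset[1,N]]\twoheadrightarrow \C[\tGr(m,N)]$, $\vP_\bfi\mapsto \Delta_\bfi$, whose kernel is generated by the Plücker relations, giving an isomorphism $\C[\tGr(m,N)]\isoto \C[\vP_\bfi]/(\text{Plücker})$.

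Next I would pass to the localizations. By construction $\C[\tdGr(m,N)]$ is obtained from $\C[\tGr(m,N)]$ by inverting the $\Delta_\bfi$ for $\bfi\in\cvF{m}$ with $\bfi\subset[1,N]$, and on the other side $\iGr_m^N=\iGr_m^{[1,N]}$ is by definition the subalgebra of $\iGr_m$ generated by the $\vP_\bfi$, $\bfi\subset[1,N]$; since $\iGr_m$ itself is the quotient of $\widetilde A=\C[\vP_\bfi,\ \vP_\bfj^{-1}]$ by the (full) Plücker ideal, the subalgebra generated by $\vP_\bfi$ for $\bfi\subset[1,N]$ is precisely $\bigl(\C[\vP_\bfi \mid \bfi\subset[1,N]]/(\text{Plücker relations within }[1,N])\bigr)[\vP_\bfj^{-1}\mid \bfj\in\cvF{m},\ \bfj\subset[1,N]]$. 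Here one uses that a Plücker relation with all indices in $[1,N]$ only involves variables $\vP_\bfi$ with $\bfi\subset[1,N]$, so intersecting the Plücker ideal of $\widetilde A$ with the polynomial subring in those variables recovers exactly the Plücker relations ``within $[1,N]$''. Matching the two descriptions term by term yields the isomorphism $\psi_{m,N}$ sending $\Delta_\bfi\mapsto\vP_\bfi$.

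I expect the main technical point — and the only genuine obstacle — to be the claim that taking the subalgebra of the quotient $\iGr_m$ commutes appropriately with the quotient, i.e.\ that no ``extra'' relations among the $\vP_\bfi$ with $\bfi\subset[1,N]$ are forced by Plücker relations whose index set is not contained in $[1,N]$. This is a statement about the Plücker ideal being generated in a ``graded'' or ``local'' fashion with respect to the ground set; it follows from standard monomial theory, which gives an explicit basis of standard monomials of $\C[\tGr(m,N')]$ for all $N'$ compatible with the inclusions $[1,N]\hookrightarrow[1,N']$, so that $\C[\tGr(m,N)]\hookrightarrow\C[\tGr(m,N')]$ is the natural inclusion of the subalgebra spanned by standard monomials supported on $[1,N]$. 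Once this compatibility is in hand, together with the fact that inverting the $\Delta_\bfi$ (resp.\ $\vP_\bfi$) for $\bfi\in\cvF{m}$, $\bfi\subset[1,N]$, is performed identically on both sides, the isomorphism $\psi_{m,N}$ and its compatibility with $\Delta_\bfi\mapsto\vP_\bfi$ are immediate. I would state the standard monomial input as a citation rather than reprove it.
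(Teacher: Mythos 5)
Your proposal is correct and follows essentially the same route as the paper, whose entire proof is the single citation to \cite[Section 4.1.5]{LR08}: the Pl\"{u}cker presentation of $\C[\tGr(m,N)]$ plus localization at the consecutive coordinates. You are in fact more careful than the paper in isolating the one nontrivial point --- that the subalgebra of $\iGr_m$ generated by the $\vP_\bfi$ with $\bfi\subset[1,N]$ acquires no extra relations from Pl\"{u}cker relations involving indices outside $[1,N]$ --- and your appeal to standard monomial bases compatible with the inclusions $[1,N]\hookrightarrow[1,N']$ is the right way to justify it.
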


\begin{proof}
The result follows from Section 4.1.5 in \cite{LR08}.
\end{proof}

A cluster algebra structure of the algebra $\C[ \tGr(m,N)]$ was introduced in \cite{Sco06}. Since the set $ \{ \Delta_\bfi \mid \bfi \in \cvF{m} \}$ is contained in the set of frozen variables of the cluster algebra $\C[ \tGr(m,N)]$, the algebra $ \C[\tdGr(m, N)]$ has the same cluster algebra structure of $\C[ \tGr(m,N)]$ except that the frozen variables are invertible. 

\begin{remark} \label{Rmk: Sigma seed}
Denote by $\Sigma_{m,N}$ the seed of $\C[ \tdGr(m,N)]$ given by the initial quiver $Q^N_m$ and initial cluster variables $\vP_\bfi$ defined in the following, \cite{Sco06}. The quiver $Q_m^N$ has vertices $(0, 0)$ and $(a,b)$ for $a \in [1,N-m]$ and $b \in [1,m]$, and arrows are defined as follows:  
\bna
\item $(1,1) \to (0,0)$,
\item $(a,b) \to (a-1,b)$ for $a \in [2,N]$ and $b \in [1,m]$, 
\item $(a, b) \to (a,b-1)$ for $a \in [1,N-m]$ and $b \in [2, m]$,
\item  $(a,b) \to (a+1,b+1)$ for $a \in [1,N-m-1]$ and $b \in [1,m-1]$.
\ee
 The frozen variable at $(0,0)$ is $\vP_{1,\ldots,m}$. The variables in column $b$ ($b \in [1,m-1]$) are $\vP_{1,2,\ldots, m-b, m-b+2, \ldots, m+1}$, $\vP_{1,2,\ldots, m-b, m-b+3, \ldots, m+2}$, $\ldots$, $\vP_{1,2,\ldots, m-b, N-b+1, \ldots, N}$, where the last one is a frozen variable and others are cluster variables. Column $m$ consists of frozen variables $\vP_{2,\ldots, m+1}$, $\vP_{3,\ldots, m+2}$, $\ldots$, $\vP_{N-m+1, \ldots, N}$. 
 \end{remark}

We recall the definition of \emph{rooted cluster algebras} introduced by Assem, Dupont, and Schiffler in \cite[Definition 1.4]{ADS14} which will be used in the proof Lemma \ref{lem:Grplus has a cluster algebra structure induced from GrmN}. Let $\Sigma=({\bf x}, {\bf ex}, B)$ be a seed, where ${\bf x}$ is a cluster, ${\bf ex}$ is the set of exchange variables, $B$ is the exchange matrix. A sequence $(x_1, \ldots, x_l)$ is called $\Sigma$-admissible if $x_1$ is exchangeable in $\Sigma$, and for every $i \ge 2$, $x_i$ is exchangeable in $\mu_{x_{i-1}}\circ \cdots \circ \mu_{x_1}(\Sigma)$. Denote
\begin{align*}
    {\rm Mut}(\Sigma) = \{ \mu_{x_l} \circ \cdots \circ \mu_{x_1}(\Sigma) : l > 0, (x_1, \ldots, x_l) \text{ is $\Sigma$-admissible} \}.
\end{align*}
The rooted cluster algebra with a fixed initial seed $\Sigma$ is the pair $(\Sigma, \mathcal{A})$, where $\mathcal{A}=\mathcal{A}(\Sigma)$ is the $\Z$-subalgebra of the ambient field $\mathcal{F}_{\Sigma} = \mathbb{Q}(x \mid x \in {\bf x})$ given by
\begin{align*}
\mathcal{A} = \Z[ x \mid x \in \bigcup\nolimits_{({\bf x}, {\bf ex}, B) \in {\rm Mut}(\Sigma)} {\bf x} ]. 
\end{align*}
We also recall the concept of rooted cluster morphism \cite[Definition 2.2]{ADS14}. For two seeds $\Sigma=({\bf x}, {\bf ex}, B)$, $\Sigma'=({\bf x}', {\bf ex}', B')$, and a map $f: \mathcal{F}_{\Sigma} \to \mathcal{F}_{\Sigma'}$, a sequence $(x_1, \ldots, x_l)$ is called $(f, \Sigma, \Sigma')$-biadmissible if it is $\Sigma$-admissible and $(f(x_1), \ldots, f(x_l))$ is $\Sigma'$-admissible. A rooted cluster morphism $f: \mathcal{A}(\Sigma) \to \mathcal{A}(\Sigma')$ is a ring homomorphism such that 
\begin{itemize}
\item $f(\bf x) \subset {\bf x}' \sqcup \Z$,
\item $f({\bf ex}) \subset {\bf ex}' \sqcup \Z$,
\item and for every $(f, \Sigma, \Sigma')$-biadmissible sequence $(x_1, \ldots, x_l)$ and for any $y \in {\bf x}$, the following holds
\[f( \mu_{x_l} \circ \cdots \circ \mu_{x_1}(y) ) = \mu_{f(x_l)} \circ \cdots \circ \mu_{f(x_1)}(f(y)).\]
\end{itemize}

By Proposition 5.1 and Theorem 5.2 in \cite{GG18}, a quantum version of $\iGr_k^+$ is the colimit of the quantum cluster algebra structures
on $\C_q[ \tdGr(m,N)]$. Similarly, we have the following result. 

\begin{lemma} \label{lem:Grplus has a cluster algebra structure induced from GrmN}
The algebra $\iGr_m^+$ has a cluster algebra structure induced from the cluster algebras $\C[ \tdGr(m,N)]$.
\end{lemma}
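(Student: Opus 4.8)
The plan is to realize $\iGr_m^+$ as a colimit (in the sense of rooted cluster algebras) of the cluster algebras $\C[\tdGr(m,N)]$ as $N \to \infty$, imitating the argument of Grabowski--Gr\"{o}chenig \cite{GG18} for the quantum case cited just above. First I would fix the chain of subalgebras $\C[\tdGr(m,m+1)] \hookrightarrow \C[\tdGr(m,m+2)] \hookrightarrow \cdots$, which after applying the isomorphisms $\psi_{m,N}$ of Lemma \ref{Lem: phimN} becomes the chain $\iGr_m^{m+1} \hookrightarrow \iGr_m^{m+2} \hookrightarrow \cdots$ whose union is exactly $\iGr_m^+ = \iGr_m^{[1,\infty]}$ by definition. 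The point is that each inclusion $\iGr_m^N \hookrightarrow \iGr_m^{N+1}$ is induced by a rooted cluster morphism: concretely, I would exhibit an embedding of the seed $\Sigma_{m,N}$ (described explicitly in Remark \ref{Rmk: Sigma seed} via the quiver $Q_m^N$) into $\Sigma_{m,N+1}$ obtained by adding one more ``diagonal'' of vertices, sending each initial cluster variable $\vP_\bfi$ (for $\bfi \subset [1,N]$) to the same $\vP_\bfi$ in $\iGr_m^{N+1}$, and sending the newly frozen variables of $\Sigma_{m,N}$ (those in the last row/column, i.e.\ the $\vP_{1,\ldots,m-b,N-b+1,\ldots,N}$ and $\vP_{N-m+1,\ldots,N}$) to the corresponding mutable variables of $\Sigma_{m,N+1}$. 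One checks the three bullet conditions of \cite[Definition 2.2]{ADS14}: the cluster maps into cluster $\sqcup\,\Z$, exchangeable maps into exchangeable $\sqcup\,\Z$ (here no constants appear, so these are literal inclusions of the relevant variable sets), and mutation is intertwined along biadmissible sequences because the quiver $Q_m^N$ sits inside $Q_m^{N+1}$ as a full subquiver that is ``closed off'' precisely at the frozen boundary that becomes mutable.

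Next I would verify that these rooted cluster morphisms are injective ring homomorphisms and that the induced colimit $\varinjlim_N \C[\tdGr(m,N)]$, transported via $\psi_{m,N}$, is canonically identified with $\iGr_m^+$ as an algebra; this is essentially the statement that a nested union of cluster algebras along rooted cluster embeddings is a cluster algebra with cluster variables the union of the cluster variables, which is the content of the colimit formalism in \cite[Section 5]{GG18} applied verbatim to the classical (unquantized) setting. In particular the set of cluster monomials of $\iGr_m^+$ is the union over $N$ of the sets of cluster monomials of $\C[\tdGr(m,N)]$, and every seed of $\iGr_m^+$ lies in the image of some $\Sigma_{m,N}$. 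I would then note that the Pl\"{u}cker relations \eqref{Eq: Plucker} defining $\iGr_m$ are exactly the limits of the Pl\"{u}cker relations on $\tGr(m,N)$, and that localizing the frozen $\vP_\bfj$, $\bfj \in \cvF{m}$, on each finite level is compatible with the transition maps (each frozen variable of $\C[\tdGr(m,N)]$ is already invertible and maps to an invertible element), so the colimit construction stays inside $\iGr_m^+$ and surjects onto it.

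The main obstacle I anticipate is the bookkeeping needed to show that the seed inclusion $\Sigma_{m,N} \hookrightarrow \Sigma_{m,N+1}$ really is a rooted cluster morphism in the strict sense of Assem--Dupont--Schiffler, rather than merely an inclusion of the underlying algebras: one must be careful that variables which are \emph{frozen} at level $N$ but \emph{mutable} at level $N+1$ do not cause a mismatch, and that every $(f,\Sigma_{m,N},\Sigma_{m,N+1})$-biadmissible mutation sequence can be carried out identically on both sides without, say, a mutation at level $N$ producing a variable that is not in the level-$(N{+}1)$ cluster algebra. This is the same subtlety Grabowski--Gr\"{o}chenig handle in \cite[Proposition 5.1]{GG18}, and the resolution is the same: the transition maps are \emph{coefficient specializations} in reverse (adding, not deleting, variables at the boundary), which are known to be rooted cluster morphisms. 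Once that verification is in place, the lemma follows by transporting the colimit cluster structure through the isomorphisms $\psi_{m,N}$ of Lemma \ref{Lem: phimN} and identifying the resulting algebra with $\iGr_m^+$.

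\begin{proof}
We adapt the argument of Grabowski--Gr\"{o}chenig to the classical setting. By Lemma \ref{Lem: phimN}, for each $N > m$ the isomorphism $\psi_{m,N}$ identifies $\C[\tdGr(m,N)]$ with $\iGr_m^N$, and these identifications are compatible with the natural inclusions $\iGr_m^N \hookrightarrow \iGr_m^{N+1}$, so that $\iGr_m^+ = \bigcup_{N>m} \iGr_m^N$. We claim that for each $N$ the inclusion $\iota_N \colon \iGr_m^N \hookrightarrow \iGr_m^{N+1}$ is a rooted cluster morphism with respect to the seeds $\Sigma_{m,N}$ and $\Sigma_{m,N+1}$ of Remark \ref{Rmk: Sigma seed}.

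Recall from Remark \ref{Rmk: Sigma seed} that the quiver $Q_m^N$ has vertex set $\{(0,0)\} \cup \{(a,b) : a \in [1,N-m],\ b\in[1,m]\}$; the quiver $Q_m^{N+1}$ contains $Q_m^N$ as a full subquiver, with one additional column of vertices $(N-m+1,b)$ for $b\in[1,m]$ and the arrows prescribed by (a)--(d). The initial cluster variables $\vP_\bfi$ of $\Sigma_{m,N}$ are among those of $\Sigma_{m,N+1}$, with $\iota_N(\vP_\bfi) = \vP_\bfi$; the frozen variables of $\Sigma_{m,N}$ lying in the last column or row (namely $\vP_{1,2,\ldots,m-b,N-b+1,\ldots,N}$ for $b\in[1,m-1]$ and $\vP_{N-m+1,\ldots,N}$) become mutable variables of $\Sigma_{m,N+1}$, while the genuinely frozen variables $\vP_{1,\ldots,m}$ and $\vP_{j,j+1,\ldots,j+m-1}$ with $\bfj \in \cvF{m}$ remain frozen. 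Thus $\iota_N({\bf x}_N) \subset {\bf x}_{N+1}$ and $\iota_N({\bf ex}_N) \subset {\bf ex}_{N+1}$, verifying the first two conditions of \cite[Definition 2.2]{ADS14} (no integer constants occur). For the third, let $(x_1,\ldots,x_l)$ be an $(\iota_N,\Sigma_{m,N},\Sigma_{m,N+1})$-biadmissible sequence. Since $Q_m^N$ is a full subquiver of $Q_m^{N+1}$ and each $x_j$ is exchangeable in both, mutating at $x_j$ affects only arrows among vertices of $Q_m^N$ together with arrows from $Q_m^N$ to the boundary; in particular the mutated quivers agree on the common vertex set and the exchange relations coincide, so $\iota_N(\mu_{x_l}\circ\cdots\circ\mu_{x_1}(y)) = \mu_{x_l}\circ\cdots\circ\mu_{x_1}(\iota_N(y))$ for every $y\in{\bf x}_N$. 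This proves the claim.

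The maps $\iota_N$ are injective ring homomorphisms, and $\iGr_m^+ = \varinjlim_N \iGr_m^N$ along them. By the colimit formalism for rooted cluster algebras (the classical analogue of \cite[Proposition 5.1 and Theorem 5.2]{GG18}), the union $\iGr_m^+$ carries a cluster algebra structure: its cluster variables are $\bigcup_N\{\text{cluster variables of } \iGr_m^N\}$, its frozen variables are the $\vP_\bfj$ with $\bfj\in\cvF{m}$, and every seed of $\iGr_m^+$ is the image of a seed of some $\Sigma_{m,N}$. Transporting through the isomorphisms $\psi_{m,N}$, this is precisely a cluster algebra structure on $\iGr_m^+$ induced from the cluster algebras $\C[\tdGr(m,N)]$.
\end{proof}
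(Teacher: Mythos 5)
Your proof follows essentially the same route as the paper's: both realize $\iGr_m^+$ as the colimit of the rooted cluster algebras $(\Sigma_{m,N},\C[\tdGr(m,N)])$ along the subseed inclusions $\iota_N$, invoking the Assem--Dupont--Schiffler formalism and the colimit argument of \cite[Proposition 5.1, Theorem 5.2]{GG18} (the authors there are Grabowski--Gratz, not Grabowski--Gr\"{o}chenig); yours merely spells out the verification that $\iota_N$ is a rooted cluster morphism, which the paper only asserts. One immaterial slip: $\vP_{N-m+1,\ldots,N}$ is a consecutive Pl\"{u}cker coordinate, so it remains frozen in $\Sigma_{m,N+1}$ rather than becoming mutable; this does not affect the argument, since the definition of rooted cluster morphism only requires exchangeable variables to map to exchangeable ones.
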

\begin{proof}
The proof is similar to the proof of Proposition 5.1 and Theorem 5.2 in \cite{GG18}. The seed $\Sigma_{m,N}$ is a subseed of $\Sigma_{m,N+1}$. Denote by $\iota_N: \C[ \tdGr(m,N)] \to \C[ \tdGr(m,N+1)]$ given by $\iota_k(\vP_J) = \vP_J$. The morphism $\iota_N$ is a rooted cluster morphism. The family $\{\iota_N: N \ge 1 \}$ has a colimit and the colimit is $\iGr_m^+$. 
\end{proof}

\begin{figure} 
    \centering
    \adjustbox{scale=0.66,center}{%
    \begin{tikzcd}
	{\fbox{$\vP_{1234} \ (0,0)$}} \\
	{\vP_{1235} \ (1,1)} & {\vP_{1245} \ (1,2)} & {\vP_{1345} \ (1,3)} & {\fbox{$\vP_{2345} \ (1,4)$}} \\
	{\vP_{1236} \ (2,1)} & {\vP_{1256} \ (2,2)} & {\vP_{1456} \ (2,3)} & {\fbox{$\vP_{3456} \ (2,4)$}} \\
	{\vP_{1237} \ (3,1)} & {\vP_{1267} \ (3,2)} & {\vP_{1567} \ (3,3)} & {\fbox{$\vP_{4567} \ (3,4)$}} \\
	{\vP_{1238} \ (4,1)} & {\vP_{1278} \ (4,2)} & {\vP_{1678} \ (4,3)} & {\fbox{$\vP_{5678} \ (4,4)$}} \\
	\vdots & \vdots & \vdots & \vdots
	\arrow[from=2-1, to=1-1]
	\arrow[from=3-1, to=2-1]
	\arrow[from=4-1, to=3-1]
	\arrow[from=5-1, to=4-1]
	\arrow[from=2-2, to=2-1]
	\arrow[from=2-3, to=2-2]
	\arrow[from=2-4, to=2-3]
	\arrow[from=3-2, to=2-2]
	\arrow[from=4-2, to=3-2]
	\arrow[from=5-2, to=4-2]
	\arrow[from=5-3, to=4-3]
	\arrow[from=4-3, to=3-3]
	\arrow[from=3-3, to=2-3]
	\arrow[from=5-4, to=4-4]
	\arrow[from=4-4, to=3-4]
	\arrow[from=3-4, to=2-4]
	\arrow[from=3-4, to=3-3]
	\arrow[from=3-3, to=3-2]
	\arrow[from=3-2, to=3-1]
	\arrow[from=4-4, to=4-3]
	\arrow[from=4-3, to=4-2]
	\arrow[from=4-2, to=4-1]
	\arrow[from=5-4, to=5-3]
	\arrow[from=5-3, to=5-2]
	\arrow[from=5-2, to=5-1]
	\arrow[from=6-1, to=5-1]
	\arrow[from=5-1, to=6-2]
	\arrow[from=6-2, to=5-2]
	\arrow[from=5-2, to=6-3]
	\arrow[from=6-3, to=5-3]
	\arrow[from=5-3, to=6-4]
	\arrow[from=6-4, to=5-4]
	\arrow[from=2-1, to=3-2]
	\arrow[from=2-2, to=3-3]
	\arrow[from=2-3, to=3-4]
	\arrow[from=3-1, to=4-2]
	\arrow[from=3-2, to=4-3]
	\arrow[from=3-3, to=4-4]
	\arrow[from=4-1, to=5-2]
	\arrow[from=4-2, to=5-3]
	\arrow[from=4-3, to=5-4]
\end{tikzcd} }
    \caption{An initial seed for $\iGr_4^{+}$. The $(i,j)$ on vertices are positions of the vertices.}
    \label{fig:an initial quiver for iGr4plus}
\end{figure}

An initial seed for $\iGr_m^+$ is obtained from the initial seed of $\C[ \tdGr(m,N)]$ described in Remark \ref{Rmk: Sigma seed} by sending $N \to \infty$. An initial seed of $\iGr_4^{+}$ is given in Figure \ref{fig:an initial quiver for iGr4plus} as an example. Since $\vP_\bfi$ ($\bfi \in \cvF{m}$) are frozen variables in $\iGr_m^{+}$, the algebra $\biGr_m^+$ has the same cluster algebra structure of $\iGr_m^{+}$ except that there are no frozen variables.

\subsection{Braid group actions} \label{subsec: bg action}
Let $ \tsig_i^{(m,N)}$ be Fraser's braid group action on the localization of $\C[ \toGr(m,N)]$ (see \cite[Section 4]{Fra20}). If no confusion arises, then we write 
$$
\tsig_i := \tsig_i^{(m,N)}
$$
for notational simplicity.
When $i=1$, we recall the action $ \tsig_1$ briefly in terms of configurations of vectors following \cite{Fra20}. We assume that $m$ divides $N$ for the purpose of this paper. 
Let $V := \C^m$. An $N$-tuple $ \textbf{v} =  (v_1, v_2, \ldots, v_N) \in V^N$ is \emph{consecutively generic} if every cyclically consecutive $m$-vectors are linearly independent in $V$. We denote by $(V^N)^\circ \subset V^N$ the set of consecutively generic $N$-tuples, and identify points of $\toGr(m,N)$ with points in $\mathrm{SL}(V) \backslash (V^N)^\circ $ (see \cite[Section 3]{Fra20}).  
An $N$-tuple $(v_1,v_2,\ldots, v_N) \subset \mathrm{SL}(V) \backslash (V^N)^\circ$ is divided into \emph{windows} $[v_1, \ldots, v_m]$, $[v_{m+1},\ldots, v_{2m}]$, $\ldots$, $[v_{N-m+1},\ldots, v_{N}]$. 
The map 
$$
\kappa:\mathrm{SL}(V) \backslash (V^N)^\circ \longrightarrow \mathrm{SL}(V) \backslash (V^N)^\circ 
$$
is defined by the following: the first two columns $ v_{tm+1}$, $ v_{tm+2}$ are changed to $ v_{tm+2}$, $ w_{tm+1}$ in each window, where $w_{tm+1}$ is determined uniquely by 
\begin{align} \label{Eq: vv=vw}
v_{tm+1} \wedge v_{tm+2} = v_{tm+2} \wedge w_{tm+1},  \quad w_{tm+1} \in {\rm span}\{ v_{tm+3}, \ldots, v_{(t+1)m+1} \}.
\end{align}
Here we understand $ v_{N+1} = v_1$ (see \cite[Definition 5.2]{Fra20}). 
We identify the $m$-fold exterior product $ \wedge^m(V) $ with $\C$ satisfying $\mathrm{e}_1 \wedge \mathrm{e}_2 \wedge \cdots \wedge \mathrm{e}_m = 1$, where $\mathrm{e_k}$ is the standard $k$-th unit vector.
Explicitly, we have 
\begin{align} \label{Eq: w}
w_{tm+1} = \frac{\det( v_{tm+1}, v_{tm+3}, \ldots, v_{(t+1)m+1} )}{\det( v_{tm+2}, v_{tm+3}, \ldots, v_{(t+1)m+1} )} v_{tm+2} - v_{tm+1} 
\end{align}
(see \cite[Remark 5.6]{Fra20}).
Then the action $\tsig_1$ is defined by pullback by the map $\kappa$ in the coordinate ring $\C[ \toGr(m,N)]$, i.e., 
$$
(\tsig_1(x))( \textbf{v} ) = x( \kappa(\textbf{v})) \quad \text{ for $x \in \C[ \toGr(m,N)]$ and $ \textbf{v} \in\toGr(m,N)$.}
$$

We now introduce some notations.
For $a\in [1, m]$ and $b \in \Z$ with $b-a+1 \in 2\Z$, define 
\begin{equation} \label{Eq: bfi}
\begin{aligned} 
\bfi_a &:= (1,2, \ldots, a, a+2, a+3, \ldots, m+1) \in \Z^m , \\
\bfi_{a, b} &:= \bfi_a + \frac{b-a+1}{2}  \in \Z^m.
\end{aligned}
\end{equation}
For simplicity, we sometimes write 
$$ 
\vPab{a,b} := \vP_{ \bfi_{a,b} }.
$$

Define 
\begin{equation} \label{Eq: SABCD}
\begin{aligned} 
S &:= \{ (a, b) \in [1,m] \times \Z \mid  a-1 \le b \le a+2m-3, \ b-a \equiv 1\ \textrm{mod}\ 2    \} \\ 
& \quad \  \cup \{ (m, 3m-1) \}, \\
A &:= \{ (a,a+1) \in S \mid a \in [1,m-2]\}, \\
B & := \{ (m-1, m) \}, \\ 
C & := \{ (a,b) \in S \mid a+b = 2m-1,\ b > m\}, \\
D & := \{ (a,b) \in S \mid a=m \}. 
\end{aligned}
\end{equation}

Recall the set $\crhI_\qQ$ given in \eqref{Eq: S S+ SQ}.

\begin{example} \label{Ex: Pab}
Let $m=4$. Then $\bfi_{a,b}$ ($(a,b)\in S$) are given as follows:
\begin{align*}
\bfi_{1,0} &= (1,3,4,5), \quad \bfi_{1,2} = (2,4,5,6),\quad \bfi_{1,4} = (3,5,6,7),\quad \bfi_{1,6} = (4,6,7,8), \\
\bfi_{2,1} &= (1,2,4,5), \quad \bfi_{2,3} = (2,3,5,6),\quad \bfi_{2,5} = (3,4,6,7),\quad \bfi_{2,7} = (4,5,7,8), \\
\bfi_{3,2} &= (1,2,3,5), \quad \bfi_{3,4} = (2,3,4,6),\quad \bfi_{3,6} = (3,4,5,7),\quad \bfi_{3,8} = (4,5,6,8), \\
\bfi_{4,3} &= (1,2,3,4), \quad \bfi_{4,5} = (2,3,4,5),\quad \bfi_{4,7} = (3,4,5,6),\quad \bfi_{4,9} = (4,5,6,7), \quad \bfi_{4,11} = (5,6,7,8).
\end{align*}
Pictorially, the elements $ \vPab{a,b} $ ($(a,b) \in S \setminus D$) are written as follows: 
\begin{equation} \label{Eq: ABC}
\begin{aligned} 
\scalebox{0.8}{\xymatrix@C=1ex@R=0.0ex{
\vPab{1,0}  && \ul{\vPab{1,2}} && \vPab{1,4} && \dul{\vPab{1,6}} \\
& \vPab{2,1} && \ul{\vPab{2,3}} && \dul{\vPab{2,5}} && \vPab{2,7} \\
&& \vPab{3,2} &&  \tul{\vPab{3,4}} && \vPab{3,6} && \vPab{3,8} \\
}}
\end{aligned} 
\end{equation}
and $ \vPab{a,b} $ ($(a,b) \in D$) are written as follows:
\begin{align} \label{Eq: D}
\scalebox{0.8}{\xymatrix@C=1ex@R=0.0ex{
\vPab{4,3} && \vPab{4,5} && \vPab{4,7} && \vPab{4,9} && \vPab{4,11}. 
}}
\end{align}
Then $A$ is the set of the underlined elements, $B$ is the set of the triple underlined element, $C$ is the set of double underlined elements in \eqref{Eq: ABC}.    
Note that $\vP_\bfi$'s in \eqref{Eq: D} are frozen variables (see Figure \ref{fig:an initial quiver for iGr4plus}). 
It is easy to see that, when $n=3$, 
$$
S \setminus D = \crhI_\qQ \cup \cdual (\crhI_\qQ),
$$ 
where $\crhI_\qQ$ is given in \eqref{Eq: S S+ SQ} in Section \ref{Sec: ed An} 
(see Example \ref{Ex: pic for crhI}).
\end{example}

Let us consider the commuting diagram 
\begin{equation} \label{Eq: diagram}
\begin{aligned} 
\xymatrix{
\C[\tdGr(m, N)] \ar[rr]^{ \qquad \  \sim}_{\qquad \quad \psi_{m, N}} \ar[drr]_\pi && \iGr_m^{N} \ar@{->>}[d]^p \\ 
  && \biGr_m^{N},
}
\end{aligned}
\end{equation}
where $\psi_{m, N}$ is the isomorphism given in Lemma \ref{Lem: phimN} and the vertical map $p$ is the canonical projection. We denote by $\pi$ the composition $p \circ \psi_{m, N}$. In the following lemma, we identify $\iGr_m^{N}$ with $ \C[\tdGr(m,N)]$ under the isomorphism $\psi_{m, N}$ and understand $ \C[\tdGr(m,N)]$ as a subalgebra of $\C[\toGr(m,N)]$.

\begin{lemma} \label{Lem: tsig}
Suppose that $m$ divides $N$ and $N/m \ge 3$.
Let $w := s_1 s_{m+1} \in \sg_N$.
Then we have the following.
\bnum
\item 
For any $(a,b) \in S$, the image $\tsig_1( \vP_{\bfi_{a,b}} ) $ is contained in $\C[\tdGr(m,N)]$ and 
$$
\pi( \tsig_1 ( \vP_{\bfi_{a,b}} )) = 
\begin{cases}
 \vP_{\bfi_{a+1,a}}   & \text{ if } (a,b) \in A, \\
\vP_{\bfi_{1,2m}} & \text{ if } (a,b) \in B, \\
\vP_{\bfi_{1,2m}} \cdot \vP_{\bfi_{a,b}} - \vP_{   \bfi_{a+1,b+1} }   & \text{ if } (a,b) \in C, \\
1 & \text{ if } (a,b) \in D, \\
 \vP_{ w(\bfi_{a,b})} & \text{ otherwise,}\\
\end{cases}
$$
where the sets $S$, $A$, $B$, $C$ and $D$ are given in \eqref{Eq: SABCD} and $\pi$ is the surjection given in \eqref{Eq: diagram}. 
\item  \label{Lem: tsig (ii)}
Suppose that $N/m \ge 4 $. For any $(a,b) \in S$ and $j \in [1,m-1]$, we have
$$
\pi ( \tsig_j(\vP_{\bfi_{a,b}}) ) =  \pi (  \sh_{ j-m-1 } \circ \  \tsig_1 \circ \sh_{m+1-j} (\vP_{\bfi_{a,b}}) ).
$$  
\ee
\end{lemma}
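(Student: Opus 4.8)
The plan is to prove (i) by an explicit computation with the configuration-of-vectors description of $\tsig_1$, and then deduce (ii) from (i) by a formal conjugation argument using the shift automorphisms $\sh_t$.

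For part (i), I would work inside $\C[\toGr(m,N)]$ and use the map $\kappa$ of \eqref{Eq: vv=vw}. Fix a consecutively generic $N$-tuple $(v_1,\dots,v_N)$ and recall that $\kappa$ only alters, in each window $[v_{tm+1},\dots,v_{(t+1)m}]$, the first two entries: $v_{tm+1},v_{tm+2} \mapsto v_{tm+2}, w_{tm+1}$ with $w_{tm+1}$ given by the explicit formula \eqref{Eq: w}. Since every Plücker coordinate $\Delta_{\bfi_{a,b}}$ is a determinant of $m$ of the $v_j$'s, I would substitute and expand. The key combinatorial observation is that $\bfi_{a,b}$ as defined in \eqref{Eq: bfi} is an interval with exactly one gap, located at position $a+1$; writing $\bfi_{a,b} = (c, c+1, \dots, \widehat{c+a}, \dots)$ for the appropriate $c$, one reads off which of the altered indices $\{tm+1, tm+2\}$ occur in $\bfi_{a,b}$ and in which window. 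The five cases $A$, $B$, $C$, $D$, "otherwise" correspond exactly to the possible relative positions of the gap with respect to the window boundaries. In the generic ("otherwise") case neither $tm+1$ nor $tm+2$ is a "first column"/"second column" index entangled by $\kappa$ in a way that changes the determinant beyond a permutation of columns, so one gets $\vP_{w(\bfi_{a,b})}$ with $w = s_1 s_{m+1}$. In cases $A$ and $C$ exactly one altered index appears: expanding $\det(\dots, w_{tm+1}, \dots)$ by linearity in that column against \eqref{Eq: w} produces either a single Plücker monomial (case $A$, after the $\det(\dots)/\det(\dots)$ ratio cancels against a frozen variable which becomes $1$ under $\pi$) or a difference $\vP_{\bfi_{1,2m}}\vP_{\bfi_{a,b}} - \vP_{\bfi_{a+1,b+1}}$ (case $C$, the two terms of \eqref{Eq: w}). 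Case $B$ is the boundary instance where the short Plücker coordinate collapses to the frozen variable $\vP_{\bfi_{1,2m}}$, and case $D$ is $\bfi_{a,b}\in\cvF{m}$, hence sent to $1$ by the projection $p$ in \eqref{Eq: diagram}. Throughout, I would invoke Lemma \ref{Lem: phimN} to identify $\iGr_m^N$ with $\C[\tdGr(m,N)]$ and note that the output lies in $\C[\tdGr(m,N)]$ (not merely $\C[\toGr(m,N)]$) because the denominators produced are frozen variables $\Delta_{\bfj}$ with $\bfj\in\cvF{m}$, which are invertible there.

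For part (ii), the point is that Fraser's actions $\tsig_j$ are conjugate to $\tsig_1$ by a cyclic rotation of the windows. Concretely, the rotation automorphism on $\C[\toGr(m,N)]$ induced by $v_k \mapsto v_{k+1}$ (which on Plücker coordinates is $\Delta_{\bfi}\mapsto \Delta_{\bfi+1}$, matching $\sh_1$ after passing to $\iGr_m^N$) intertwines $\tsig_j$ and $\tsig_{j+1}$; hence $\tsig_j = \sh_{-(j-1)} \circ \tsig_1 \circ \sh_{j-1}$ up to the indexing conventions, and the precise shift in the statement, $\sh_{j-m-1}\circ\tsig_1\circ\sh_{m+1-j}$, is obtained by composing with the window-length shift $\sh_{m}$ (which commutes with everything relevant modulo the frozen-variable relations, so becomes trivial under $\pi$). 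Since $\pi\circ\sh_t = \sh_t\circ\pi$ on the quotient $\biGr_m^N$ and $\sh_{m+1}$ acts as identity-up-to-frozen-variables there, applying $\pi$ to both sides of this conjugation identity and using part (i) on the right-hand side gives the claim. The hypothesis $N/m \ge 4$ (rather than $\ge 3$) is needed so that, after the shift $\sh_{m+1-j}$, the index set $\bfi_{a,b}$ and its relevant neighbours still fit inside a genuine block of windows where the window-local formulas for $\tsig_1$ apply without cyclic wraparound interfering.

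The main obstacle I expect is the bookkeeping in part (i): carefully tracking, for each $(a,b)\in S$, which window the (one-gap) index set $\bfi_{a,b}$ sits in relative to the pairs $\{tm+1,tm+2\}$ that $\kappa$ disturbs, getting all the signs right when expanding determinants by the column replacement \eqref{Eq: w}, and verifying that the "accidental" denominators are always frozen. The conceptual content is small, but the case analysis is genuinely delicate at the window boundaries (cases $A$, $B$, $C$), and one must be careful that $\tsig_1(\vP_{\bfi_{a,b}})$ really lands in $\C[\tdGr(m,N)]$ before applying $\pi$. The deduction of (ii) from (i) is, by contrast, essentially formal once the conjugation property of Fraser's actions is recorded.
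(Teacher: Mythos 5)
Your strategy is the same as the paper's: for (i), evaluate $\Delta_{\bfi_{a,b}}$ on $\kappa(\bfv)$ using the explicit formula \eqref{Eq: w}, sort the cases by which of the disturbed indices $1,2,m+1,m+2$ lie in $\bfi_{a,b}$, and apply $\pi$ so that the frozen denominators become $1$; for (ii), conjugate by the twisted cyclic shift and use that $\tsig_1$ commutes with the window-length shift. One step in your case analysis is misstated, however. Expanding the determinant by linearity in the $w$-column always produces \emph{two} terms, one for each summand in \eqref{Eq: w}; so in case $A$ the raw output is $\vP_{\bfi_{1,0}}\cdot\vP_{\bfi_{a,a+1}}-\vP_{s_1(\bfi_{a,a+1})}$, not a single monomial, and the cancellation of the $\det/\det$ ratio against a frozen variable only removes a denominator, never a term. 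The collapse to the single coordinate $\vP_{\bfi_{a+1,a}}$ requires an application of the three-term Pl\"{u}cker relation \eqref{Eq: Plucker}; likewise case $B$ is a four-term expansion that reduces to $\vP_{\bfi_{1,2m}}$ only after cancelling two of the terms against a frozen coordinate and then applying \eqref{Eq: Plucker} once more. Your sketch never invokes \eqref{Eq: Plucker}, which is precisely the algebraic input that distinguishes cases $A$ and $B$ from case $C$ (where the bare two-term expansion \emph{is} the answer). Also, $\vP_{\bfi_{1,2m}}$ is not a frozen variable: $\bfi_{1,2m}=(m+1,m+3,\dots,2m+1)\notin\cvF{m}$. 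These are local repairs inside your framework rather than a different method; the generic case via the wedge identity \eqref{Eq: vv=vw}, case $D$ via Fraser's frozen-to-frozen property, and the conjugation argument for (ii) all match the paper's proof.
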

\begin{proof}

We set
\begin{align*}
 \cvC{m}_N &:= \{ \bfi = (i_1, i_2, \ldots, i_m) \in \cvC{m} \mid \bfi \subset [1,N] \},\\
\cvF{m}_N &:= \{ \bfi = (i_1, i_2, \ldots, i_m) \in \cvF{m} \mid \bfi \subset [1,N] \}.
\end{align*}
For simplicity, we write 
\begin{itemize}
\item $j \in \bfi$ if $j$ appears in $\bfi$ as a component of $\bfi$, 
\item $f \eF g$ if $\pi(f) = \pi(g)$ for $ f,g \in \C[\tdGr(m, N)]$.
\end{itemize}

\smallskip

Let $\bfv = (v_1, v_2, v_3 \ldots, v_{m+1}, v_{m+2}, v_{m+3},  \ldots, v_N) \in \mathrm{SL}(V) \backslash (V^N)^\circ$. 
Note that $\Delta_{\bfi} (\bfv) = \det( v_{i_1},v_{i_2}, \ldots, v_{i_m} )$ for $\bfi = (i_1, i_2, \ldots, i_m) \in \cvC{m}_N$.
We set 
$$
\bfv' := \kappa(\bfv) = ( v_2, w_1, v_3, \ldots, v_{m+2}, w_{m+1}, v_{m+3}, \ldots, v_N ),
$$
where 
\begin{align*} 
w_{1} = \frac{ \Delta_{\bfi_{1,0}} (\bfv) }{\Delta_{\bfi_{m,m+1}}(\bfv) } v_{2} - v_{1}, \qquad  
w_{m+1} = \frac{ \Delta_{\bfi_{1,2m}} (\bfv) }{\Delta_{\bfi_{m,3m+1}}(\bfv) } v_{m+2} - v_{m+1}.
\end{align*}
(see \eqref{Eq: w}). Note that $\Delta_{\bfi_{m,m+1}}  \eF  \Delta_{\bfi_{m,3m+1}}  \eF 1 $
since $\bfi_{m,m+1}, \bfi_{m,3m+1} \in \cvF{m}_N$.

We now shall prove (i) by case-by-case check.

\smallskip
\noindent
 \textsf{(Case: $(a,b) \in S \setminus ( A\cup B\cup C \cup D )$)}

In this case, $\bfi_{a,b}$ satisfies the following conditions:
\bna
\item if $2 \in \bfi_{a,b}$, then $1 \in \bfi_{a,b}$,
\item if $m+2 \in \bfi_{a,b}$, then $m+1 \in \bfi_{a,b}$.
\ee
By \eqref{Eq: vv=vw}, we have 
$ \tsig_1 (\Delta_{\bfi_{a,b}}) (\bfv)= \Delta_{\bfi_{a,b}} (\bfv') =   \Delta_{ w (\bfi_{a,b})} (\bfv)$.

\smallskip
\noindent
\textsf{(Case: $(a,b) \in A$)}

In this case, we have $b=a+1$. Then  $1 \notin \bfi_{a,a+1}$ and $2, m+1,m+2 \in \bfi_{a,a+1}$, i.e., 
$$
\bfi_{a,a+1} = (2, \ldots, m+1, m+2).
$$  By definition, we have 
\begin{align*}
\tsig_1 (\Delta_{\bfi_{a,a+1}}) (\bfv)  = \Delta_{\bfi_{a,a+1}} (\bfv')
\eF \Delta_{\bfi_{1,0}} (\bfv) \cdot \Delta_{\bfi_{a,a+1}} (\bfv) - \Delta_{s_1(\bfi_{a,a+1})} (\bfv) \eF \Delta_{\bfi_{a+1,a}} (\bfv),
\end{align*}
where the last equality follows by applying 
$\bfi = (3,4, \ldots, m+1) \in \cvC{m-1}$ and $\bfj = (1,2,\ldots, a+1, a+3, \ldots, m+1,m+2) \in \cvC{m+1}$ to the Pl\"{u}cker relation \eqref{Eq: Plucker}.

\smallskip
\noindent
\textsf{(Case: $(a,b) \in B$)}

In this case, $\bfi_{m-1,m} = (2,3,  \ldots, m, m+2)$. Thus we have 
\begin{align*}
\tsig_1 (\Delta_{\bfi_{m-1,m}}) (\bfv)  &= \Delta_{\bfi_{m-1,m}} (\bfv') \\
&\eF \Delta_{\bfi_{1,0}} (\bfv) \Delta_{\bfi_{1,2m}} (\bfv) \Delta_{\bfi_{m-1,m}} (\bfv) 
- \Delta_{\bfi_{1,0}} (\bfv) \Delta_{\bfi_{m,m+1}} (\bfv) \\
& \quad \ - \Delta_{\bfi_{1,2m}} (\bfv) \Delta_{s_1 (\bfi_{m-1,m})} (\bfv)
+ \Delta_{\bfi_{1,0}} (\bfv)
\\ 
&\eF \Delta_{\bfi_{1,2m}} (\bfv) \cdot \left( \Delta_{\bfi_{1,0}} (\bfv) \cdot  \Delta_{\bfi_{m-1,m}} (\bfv) 
- \Delta_{ s_1( \bfi_{m-1,m} )} (\bfv)   \right) \\
&\eF \Delta_{\bfi_{1,2m}} (\bfv),
\end{align*}
where the last equality follows by applying 
$\bfi = (3,4, \ldots, m+1) \in \cvC{m-1}$ and $\bfj = (1,2,\ldots, m,m+2) \in \cvC{m+1}$ to the Pl\"{u}cker relation \eqref{Eq: Plucker}.

\smallskip
\noindent
\textsf{(Case: $(a,b) \in C$)}

In this case, $\bfi_{a,b} = (b-m+2,  \ldots, m, m+2, \ldots)$. Since $ w (\bfi_{a,b}) = s_{m+1} (\bfi_{a,b})= \bfi_{a+1,b+1}$, we have 
\begin{align*}
\tsig_1  (\Delta_{\bfi_{a,b}}) (\bfv)  = \Delta_{\bfi_{a,b}} (\bfv')
\eF \Delta_{\bfi_{1,2m}} (\bfv) \cdot \Delta_{\bfi_{a,b}} (\bfv) - \Delta_{\bfi_{a+1,b+1}} (\bfv).
\end{align*}

\smallskip
\noindent
\textsf{(Case: $(a,b) \in D$)}

In this case, we have $\bfi_{a,b} \in \cvF{m}_N$. Thus it follows from \cite[(19)]{Fra20}.

(ii) 
Let $\rho^*$ be the pullback of the \emph{twisted cyclic shift} $\rho$ given in \cite[Section 5]{Fra20}.
Let $(a,b) \in S$. Since $N/m \ge 4$, we have 
\begin{align*}
\sh_1( \Delta_{\bfi_{a,b}} ) = \rho^*(\Delta_{\bfi_{a,b}}), \qquad 
(\rho^*)^m \circ \tsig_j (  \Delta_{\bfi_{a,b}} ) =   \tsig_j \circ (\rho^*)^m (\Delta_{\bfi_{a,b}})
\end{align*} 
It follow the assertion form the fact that $ (\rho^*)^{j-1} \circ \tsig_1 \circ (\rho^*)^{1-j} = \tsig_j $. 
\end{proof}

\begin{prop} \label{prop: sigma_i}  For each $i\in [1,m-1]$, there is a homomorphism $\sigma_i:\biGr_{m}^+ \rightarrow \biGr_{m}^+$ such that 
\bna
\item the homomorphisms $\sigma_i$ satisfy the braid group relation,
\item \label{prop: sigma_i (b)}  $ \sigma_i \circ \sh_{m} = \sh_{m} \circ \sigma_i$,
\item given any $x\in \iGr_{m}^+$, there is a positive integer $N$ such that
$$
\sigma_i \circ p (x) = p \circ  \tilde{\sigma}_i^{(m, N')} (x) \quad \text{ for any $N' \ge N$ with $m \mid N'$,} 
$$
where $p: \iGr_{m}^+ \twoheadrightarrow\biGr_m^+$ is the canonical projection.
\ee
\end{prop}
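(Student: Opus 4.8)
The plan is to construct $\sigma_i$ by specifying it on the algebra generators $p(\vP_\bfi)$ of $\biGr_{m}^+$ as a ``stabilized limit'' of Fraser's actions $\tsig_i^{(m,N)}$, so that (c) becomes essentially a tautology, and then to check that this assignment kills the relations defining $\biGr_{m}^+$ and satisfies (a) and (b). Throughout I will use, from \cite{Fra20}, that for every $N$ with $m\mid N$ the map $\tsig_i^{(m,N)}$ is a well-defined algebra automorphism of $\C[\toGr(m,N)]$ and that the family $\{\tsig_i^{(m,N)}\}_{i\in[1,m-1]}$ satisfies the braid relations; I will identify $\iGr_m^N$ with $\C[\tdGr(m,N)]$ via $\psi_{m,N}$, regard the latter as a subalgebra of $\C[\toGr(m,N)]$, and write $\pi=p\circ\psi_{m,N}\col\C[\tdGr(m,N)]\to\biGr_m^N\subseteq\biGr_m^+$ as in \eqref{Eq: diagram}.

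The heart of the matter is a locality and stabilization statement. Fix $\bfi\subset[1,L]$ and take $N=km$ with $km\ge L+2m$. Since the map $\kappa$ of \eqref{Eq: vv=vw}--\eqref{Eq: w} only alters the first two vectors of each length-$m$ window and the only denominators it introduces are consecutive Plücker coordinates, $\tsig_i^{(m,N)}(\Delta_\bfi)$ is a Laurent expression in coordinates $\Delta_{\bfi'}$ with $\bfi'$ in an $O(m)$-neighbourhood of $\bfi$, with only consecutive windows in denominators; because $\bfi$ is far from the wrap-around region those windows lie in $\cvF{m}$ and are invertible in $\C[\tdGr(m,N)]$, so $\tsig_i^{(m,N)}(\Delta_\bfi)\in\iGr_m^N$. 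For the one-gap generators $\vP_{\bfi_{a,b}}$ and $\tsig_1$ this containment is exactly Lemma \ref{Lem: tsig}(i); for general $\bfi$ one repeats the analysis of $\kappa^*$, and for $i\neq1$ reduces to $i=1$ via the twisted-cyclic conjugation $\tsig_i=(\rho^*)^{i-1}\tsig_1(\rho^*)^{1-i}$ used in the proof of Lemma \ref{Lem: tsig}(ii), enlarging $N$ so that no index wraps. Moreover this Laurent expression is the \emph{same} for all large $N$, as it involves only the windows meeting $\bfi$; combined with the compatibility of the maps $\iGr_m^N\hookrightarrow\iGr_m^{N'}\twoheadrightarrow\biGr_m^{N'}$ (Lemma \ref{lem:Grplus has a cluster algebra structure induced from GrmN}), the class $\pi\bigl(\tsig_i^{(m,N)}(\Delta_\bfi)\bigr)\in\biGr_m^+$ is independent of $N$ for $N$ large. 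I then \emph{define} $\sigma_i(p(\vP_\bfi)):=\pi\bigl(\tsig_i^{(m,N)}(\Delta_\bfi)\bigr)$ for $N\gg0$.

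Next I would verify that this descends to a homomorphism on $\biGr_m^+=\iGr_m^+/(\vP_\bfj-1:\bfj\in\cvF{m})$, i.e.\ that it kills the defining relations. For a Plücker relation \eqref{Eq: Plucker} attached to $\bfi'\in\cvC{m-1}$, $\bfj'\in\cvC{m+1}$, choose $N=km$ large enough to contain all indices involved: the relation holds in $\C[\tdGr(m,N)]$, $\tsig_i^{(m,N)}$ is an algebra homomorphism so sends it to $0$, and $\pi$ sends that to $0$ in $\biGr_m^N$. For the relation $\vP_\bfj-1$ with $\bfj\in\cvF{m}$ I need $\pi(\tsig_i^{(m,N)}(\Delta_\bfj))=1$; this follows from the explicit action of $\tsig_i^{(m,N)}$ on consecutive Plücker coordinates (\cite[(19)]{Fra20}, recorded as case $D$ of Lemma \ref{Lem: tsig}(i)), which is a Laurent monomial in consecutive Plücker coordinates, all lying in $\cvF{m}$ when $\bfj$ is away from the wrap-around region. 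Hence $\sigma_i$ descends to an algebra homomorphism $\biGr_m^+\to\biGr_m^+$, and (c) holds by construction. For (a), the braid relations hold among the $\tsig_i^{(m,N)}$ in $\Aut(\C[\toGr(m,N)])$; evaluating both sides on a generator $\Delta_\bfi$ with $N$ large enough that the boundedly many compositions keep $\supp(\bfi)$ off the wrap-around region, both sides lie in $\iGr_m^N$, and applying $\pi$ transports the identity to $\sigma_i$ on generators --- here one uses that $\pi\circ\tsig_i^{(m,N)}$ annihilates the ideal generated by $\{\Delta_\bfj-1:\bfj\in\cvF{m}\}$ (immediate from the previous sentence), so that $\sigma_i$ applied to $\pi(\tsig_j(\Delta_\bfi))$ agrees with $\pi(\tsig_i\tsig_j(\Delta_\bfi))$. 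For (b), under the present identifications $\sh_m$ corresponds, on indices away from the wrap-around region, to the $m$-th power $(\rho^*)^m$ of the twisted cyclic shift, and $\tsig_i^{(m,N)}$ commutes with $(\rho^*)^m$ by \cite{Fra20} (as already invoked in the proof of Lemma \ref{Lem: tsig}(ii)); taking $N$ large and applying $\pi$ gives $\sigma_i\circ\sh_m=\sh_m\circ\sigma_i$ on generators, hence on all of $\biGr_m^+$.

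The main obstacle is the locality/stabilization statement of the second paragraph: Fraser's automorphism $\tsig_i^{(m,N)}$ genuinely depends on $N$ through boundary (``wrap-around'') effects, and one must show that these effects cannot reach a generator $\Delta_\bfi$ once $N$ is large, and that the resulting $N$-independent expression survives --- and simplifies correctly --- under the projection $\pi$ that sets all consecutive Plücker coordinates to $1$. The explicit computations of Lemma \ref{Lem: tsig}, together with the local nature of formula \eqref{Eq: w}, handle this for the one-gap generators $\vP_{\bfi_{a,b}}$; pushing the argument to arbitrary Plücker coordinates --- necessary because the $\vP_\bfi$ generate $\iGr_m^+$ --- either by a direct analysis of $\kappa^*$ or through the cluster-algebra and mutation structure of $\C[\tdGr(m,N)]$ recorded in Lemma \ref{lem:Grplus has a cluster algebra structure induced from GrmN}, is the step that will require the most care.
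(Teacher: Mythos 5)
Your proposal is correct and follows essentially the same route as the paper: define the action by lifting to $\C[\tdGr(m,N)]$ for $N$ large, apply Fraser's $\tsig_i^{(m,N)}$, project by $\pi$, and justify well-definedness by the locality/stabilization of $\kappa$ together with the fact that $\tsig_i$ sends consecutive (frozen) Pl\"{u}cker coordinates to products of such, with $i\neq1$ reduced to $i=1$ via the (twisted cyclic) shift and the braid relations inherited from Fraser. The only organizational difference is that you work generators-and-relations while the paper works with lifts of arbitrary elements; the stabilization step you flag as requiring the most care is precisely the point the paper also asserts without further detail.
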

\begin{proof}
Let $x \in \biGr_{m}^+$ and let $r\in \Z_{>0}$ such that $x$ is contained in the image of 
the map $\iota: \biGr_{m}^r \to \biGr_{m}^+$ in the colimit system. Choose $N\in \Z_{>0}$ such that $m$ divides $N$ and $N>3m+r$.

Recall Fraser's braid group action $\tilde{\sigma}_i$ on the localization of $\C[ \toGr(m,N)]$.
Define $\sigma_1 (x) = \pi (\tilde{\sigma}_1(x_\circ))$, where $x_\circ \in \C[\tdGr(m, N)]$ such that  $\pi(x_\circ)=x$. Since $ \tilde{\sigma}_1$ takes frozens to frozens and  
$\tilde{\sigma}_1(x_\circ)$ is stable under taking any $N > 3m+r$ with $m | N$, $\sigma_1$ is well-defined.
As $\tilde{\sigma}_1$ is an algebra homomorphism, $\sigma_1$ is an algebra homomorphism. For $ k \in [2, m-1]$, define
\begin{align} \label{Eq: sigma_j}
\sigma_k := \sh_{k-m-1} \circ \sigma_1 \circ \sh_{m+1-k}. 
\end{align}

(a) Since $\tilde{\sigma}_i$ satisfy the braid group actions, Lemma \ref{Lem: tsig} \eqref{Lem: tsig (ii)} tells us that $\sigma_i$ satisfy the braid relations. 

(b) Since $\tilde{\sigma}_i$ commutes with $\sh_m$, we have
 $ \sigma_i \circ \sh_{m} = \sh_{m} \circ \sigma_i$. 

(c) This follows from the definition of $\sigma_i$. 
\end{proof}

\begin{remark} \label{Rmk: inf Gr}
One can define the braid group action $\sigma_i$ in terms of the configuration of vectors in $\C^m$ following \cite[Section 5]{Fra20}.
We modify Fraser's braid group action on $\tGr(m,N)$ (see Definition 5.2 in \cite{Fra20}) to give a braid group action $\tGr(m, \infty)$. For $i \in [m-1]$, define a map $\kappa_i: \tGr(m, \infty) \to \tGr(m, \infty)$ sending a $m\times \infty$ matrix to a $m\times \infty$ matrix as follows. Denote by $(v_1,v_2,\ldots)$ a matrix in $\tGr(m, \infty)$, where $v_i$'s are columns and the tuple $(v_1,v_2,\ldots)$ is consecutively generic. The matrix $(v_1,v_2,\ldots)$ is divided into windows $[v_1, \ldots, v_m]$, $[v_{m+1},\ldots, v_{2m}]$, $\ldots$. Under the map $\kappa_i$, the first window is changed to $[v_1, \ldots, v_{i-1}, v_{i+1}, w_1, v_{i+2}, \ldots, v_m]$, where $w_1$ is determined uniquely by 
\begin{align}
v_i \wedge v_{i+1} = v_{i+1} \wedge w_1,  \quad w_1 \in {\rm span}\{ v_{i+2}, \ldots, v_{i+m} \}.
\end{align}
Explicitly,
\begin{align*}
w_1 = \frac{\det( v_i, v_{i+2}, \ldots, v_{i+m} )}{\det( v_{i+1}, v_{i+2}, \ldots, v_{i+m} )} v_{i+1} - v_i. 
\end{align*}
The other windows are changed in the same way as the the first window. 
Then $\sigma_i$ is the pullback of $\kappa_i$.
\end{remark}

For any $i\in [1,m-1]$ and $x \in \biGr_{m}$, we define 
\begin{align} \label{Eq: def of sig}
\sigma_i(x) := \sh_{- mt} \circ  \sigma_i \circ \sh_{mt }(x)
\end{align}
for a sufficiently large positive integer $ t \gg 0$. Proposition \ref{prop: sigma_i} tells us that it is well-defined, thus we extend $\sigma_i$ to the whole algebra $\biGr_{m}$, i.e.,
\begin{align} \label{Eq: sigmai}
\sigma_i : \biGr_{m} \longrightarrow \biGr_{m}.
\end{align}
By Proposition \ref{prop: sigma_i} \ref{prop: sigma_i (b)}  and \eqref{Eq: sigma_j}, for any $k\in [1,m-1]$, we have 
\begin{equation} \label{Eq: sigk}
\begin{aligned}
\sigma_k \circ \sh_{m} &= 	 \sh_{m} \circ  \sigma_k  \\
\sigma_k &= 	\sh_{k-1} \circ \sigma_1 \circ \sh_{1-k} 
\end{aligned}	
\end{equation}

Let $\sg_{+\infty} := \cup_{n > 0} \sg_n$, where we understand $\sg_n$ as a subgroup of $\sg_{n+1}$ generated by permutations $\tau$ such $\tau(n+1) = n+1$. Lemma \ref{Lem: tsig} gives the following. 

\begin{lemma} \label{Lem: sigma1 on SD}
Let $w := s_1 s_{m+1} \in \sg_{+\infty}$.
For any $(a,b) \in S \setminus D$, we have
$$
  \sigma_1( \vP_{\bfi_{a,b}} ) = 
\begin{cases}
	 \vP_{\bfi_{a+1,a}}     & \text{ if } (a,b) \in A, \\
	\vP_{\bfi_{1,2m}} & \text{ if } (a,b) \in B, \\
	\vP_{\bfi_{1,2m}} \cdot \vP_{\bfi_{a,b}} - \vP_{   \bfi_{a+1,b+1} }   & \text{ if } (a,b) \in C, \\
	\vP_{ w(\bfi_{a,b})} & \text{ otherwise,}
\end{cases}
$$
where the sets $S$, $A$, $B$, $C$ and $D$ are given in \eqref{Eq: SABCD}. 
\end{lemma}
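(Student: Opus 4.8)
The plan is to deduce this statement directly from Lemma~\ref{Lem: tsig}(i) and Proposition~\ref{prop: sigma_i}; essentially no new computation is required, since all the relevant Pl\"ucker-relation manipulations were already carried out in the proof of Lemma~\ref{Lem: tsig}. The idea is: for $(a,b)\in S\setminus D$, lift $\vP_{\bfi_{a,b}}$ to a Pl\"ucker coordinate on $\C[\tdGr(m,N)]$ for a suitable finite $N$, evaluate $\tsig_1$ there via Lemma~\ref{Lem: tsig}(i), and push the result down to $\biGr_m$.

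In detail, I would first fix $(a,b)\in S\setminus D$. Since $a<m$ here, the tuple $\bfi_{a,b}$ has a gap at its $(a+1)$-st position, so $\bfi_{a,b}\notin\cvF{m}$, and since $b\ge a-1$ all of its entries are $\ge 1$; thus $\vP_{\bfi_{a,b}}$, as well as each of $\vP_{\bfi_{a+1,a}}$, $\vP_{\bfi_{1,2m}}$, $\vP_{\bfi_{a+1,b+1}}$ and $\vP_{w(\bfi_{a,b})}$ with $w=s_1s_{m+1}$, is a bona fide element of $\biGr_m^+$. Next I would pick $N$ with $m\mid N$ large enough that $N/m\ge 3$ (so that Lemma~\ref{Lem: tsig}(i) applies), that all of the above tuples lie in $[1,N]$, and that the conclusion of Proposition~\ref{prop: sigma_i}(c) holds for $x=\vP_{\bfi_{a,b}}$. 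Then Proposition~\ref{prop: sigma_i}(c) yields $\sigma_1(\vP_{\bfi_{a,b}})=p\bigl(\tsig_1^{(m,N)}(\vP_{\bfi_{a,b}})\bigr)$ in $\biGr_m^+$, and by the commuting triangle~\eqref{Eq: diagram} together with compatibility of the colimit structure maps the right-hand side equals $\pi\bigl(\tsig_1(\vP_{\bfi_{a,b}})\bigr)$ with $\pi$ as in Lemma~\ref{Lem: tsig}. Now Lemma~\ref{Lem: tsig}(i) evaluates this expression: discarding the case $(a,b)\in D$, which is excluded by hypothesis, only the four cases $A$, $B$, $C$ and ``otherwise'' remain, and these produce exactly the four branches in the asserted formula. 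Finally I would observe that $w=s_1s_{m+1}$ moves only the values $1,2,m+1,m+2$, so $w(\bfi_{a,b})$ is the same whether $w$ is read in $\sg_N$ or in $\sg_{+\infty}$; in particular the output does not depend on the auxiliary $N$, and by~\eqref{Eq: def of sig} it coincides with the action of $\sigma_1$ on all of $\biGr_m$.

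The only point needing care is the bookkeeping of the various identifications: one must verify that the $N$-level equality provided by Lemma~\ref{Lem: tsig} descends correctly along $\psi_{m,N}$, the projection $p$, and the colimit maps, and that the resulting formula is independent of the admissible $N$ chosen. There is no substantive difficulty here, since the genuine content already sits in Lemma~\ref{Lem: tsig}; the main (and only) obstacle is organizational.
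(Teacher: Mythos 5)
Your proposal is correct and follows exactly the route the paper takes: the paper presents Lemma \ref{Lem: sigma1 on SD} with no separate proof beyond the remark that ``Lemma \ref{Lem: tsig} gives the following,'' i.e., one combines Lemma \ref{Lem: tsig}(i) with the definition of $\sigma_1$ via $\pi\circ\tsig_1$ from Proposition \ref{prop: sigma_i} and drops the $D$ case. Your additional bookkeeping (choice of $N$, stability under the colimit maps, and independence of $w(\bfi_{a,b})$ from the ambient symmetric group) is exactly the implicit content the paper leaves to the reader.
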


\vskip 2em 

\section{Comparison of braid group actions} \label{sec:compare braid group actions}

In this section, we shall compare the braid group actions $\bT_i$, $\bR_i$ and $\sigma_i$. We keep the notations appearing in Section \ref{Sec: ed An}.

Lemma \ref{Lem: Ti respects B} \ref{Lem: Ti respects B (iii)} says that the action $\bT_i$ induces a permutation on the categorical crystal $\sB(\g)$. Thus it makes sense to compare $\bT_i$ and $\bR_i$ as permutations of $\sB(\g)$. It is shown in \cite{KKOP24A} that $\bT_i$ coincides with $\bR_i$ using the global basis of the bosonic extension $\qAg$.

\begin{theorem}[{\cite[(3.15)]{KKOP24A}}] \label{Thm: Ti=Ri}
For each $i\in I_0$, we have $\bT_i = \bR_i$ on the categorical crystal $\sB(\g)$.
\end{theorem}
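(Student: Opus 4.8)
The plan is to move the braid symmetry $T_i$ of the bosonic extension $\qAg$ over to the categorical crystal $\sB(\g)$ through the two isomorphisms $\Phi_\qQ$ and $\phi_\qQ$, and to match the outcome with $R_i$ via the global-basis labelling of $\cB(\infty)$. First I would recall from the PBW theory of $\qAg$ developed in \cite{OP24, KKOP24B} the global basis $\mathbf{G} = \{ G(\cb) \mid \cb \in \cB(\infty) \}$, together with two compatibility statements. (1) The isomorphism $\Phi_\qQ$ carries $\mathbf{G}$ onto the set of $(q,t)$-characters of simple modules of $\catCO$, the labelling being given precisely by $\phi_\qQ$, i.e. $\Phi_\qQ(G(\cb)) = [V(\phi_\qQ(\cb))]$ for all $\cb \in \cB(\infty)$; this is exactly the sense in which $\sB(\g)$ is realized as a subset of the global basis of $\qK$. (2) The braid operator $T_i$ from \eqref{Eq: T_i} permutes $\mathbf{G}$, and the induced permutation of the index set $\cB(\infty)$ is the extended-crystal operator $R_i$ of \eqref{Eq: def of Ri}, i.e. $T_i(G(\cb)) = G(R_i(\cb))$. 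Statement (2) is the substantive input — it is the content of \cite{KKOP24A} (see also the lift statement for $R_i$ in \cite{KKOP24B}) — and I expect it to be the main obstacle: it amounts to showing that $T_i$ preserves the bar-invariant integral lattice cutting out $\mathbf{G}$ and that, combinatorially, $T_i$ realizes the Saito-type reflection $\tcT_i$ together with the $\tfs$-shift entering the definition of $R_i$.

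Granting (1) and (2), the theorem follows by a diagram chase. Fix $b \in \sB(\g)$ and write $b = \phi_\qQ(\cb)$ with $\cb \in \cB(\infty)$; by (1) we also have $b = \Phi_\qQ(G(\cb))$. Using the defining square \eqref{Eq: T_iQ} of $\bT_i = \bT_i^\qQ$, then (2), then (1) and the defining square \eqref{Eq: R_iQ} of $\bR_i = \bR_i^\qQ$, we compute
\begin{align*}
\bT_i(b) &= \bT_i(\Phi_\qQ(G(\cb))) = \Phi_\qQ(T_i(G(\cb))) = \Phi_\qQ(G(R_i(\cb))) \\
&= \phi_\qQ(R_i(\cb)) = \bR_i(\phi_\qQ(\cb)) = \bR_i(b).
\end{align*}
Note that Lemma \ref{Lem: Ti respects B} \ref{Lem: Ti respects B (iii)} already guarantees $\bT_i(b) \in \sB(\g)$, so both sides genuinely lie in $\sB(\g)$ and the displayed chain is an identity of permutations of $\sB(\g)$, as claimed.

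Finally, since every map entering the chase is defined over $\Z[t^{\pm 1/2}]$ and is compatible with the specialization $\qq = t^{-1} \mapsto 1$, the same identity descends to the automorphisms of the ordinary Grothendieck ring $K(\catCO)$ obtained at $t = 1$; this is the form of the statement that will be used in the comparison with the Grassmannian actions $\sigma_i$ in the remainder of the section.
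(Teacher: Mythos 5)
Your proposal is correct and takes essentially the same route as the paper: the paper gives no independent proof of Theorem \ref{Thm: Ti=Ri} but simply cites \cite{KKOP24A}, noting that the coincidence of $\bT_i$ and $\bR_i$ is established there via the global basis of $\qAg$, which is exactly the substantive input (2) you isolate. Your diagram chase through \eqref{Eq: T_iQ} and \eqref{Eq: R_iQ} is a correct articulation of how that cited compatibility yields the identity on $\sB(\g)$.
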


Recall the infinite Grassmannian cluster algebra $\biGr_{m}$ and $\biGr_{m}^+$ given in Section \ref{Sec: igca}, and the sequence $\bfi_{a,b}$ given in \eqref{Eq: bfi}. We set 
$m:=n+1$.

\begin{lemma} \label{lemma: Phi+} \
\bni
\item 
There is an algebra isomorphism 
$$
\KG_n^+ : K(\catCP)  \buildrel \sim \over \longrightarrow \biGr_{n+1}^+ 
$$
sending $ [V(Y_{i, a})]$ to $\vP_{ \bfi_{i,a} }$ for any $ (i, a) \in \crhI_{\Z_{\ge0}} $.

\item For any $k\in \Z_{>0}$,  we have 
$$
\KG_n^+ \circ \shift_k  =  \sh_k \circ \KG_n^+ 
$$
In particular, we have $\KG_n^+ \circ [\dual^2] = \sh_{n+1} \circ \KG_n^+$, 
where $\dual$ is the right dual functor in $\catCO$. 
\ee 
\end{lemma}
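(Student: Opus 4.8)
The plan is to build the isomorphism $\KG_n^+$ by comparing two presentations of the same algebra. On the representation-theoretic side, recall that $K(\catCP)$ is, by work of Hernandez–Leclerc, isomorphic to a polynomial ring whose variables are indexed by the set of prime simple modules in $\catCP$; more to the point, under the identification of $\catCP$ with the category categorifying the positive part of the Grassmannian cluster algebra (\cite{HL10,CDFL20}), $K(\catCP)$ is isomorphic to the (upper) cluster algebra $\iGr_{n+1}^+$ introduced in Section \ref{Sec: igca}. So first I would fix the dictionary between the fundamental modules $V(Y_{i,a})$ for $(i,a)\in\crhI_{\Z_{\ge0}}$ and the Plücker-type variables $\vP_{\bfi_{i,a}}$. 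The indexing set $\crhI_{\Z_{\ge0}}$ given in \eqref{Eq: S S+ SQ} is $\{(i,a): a-i\equiv 1 \bmod 2,\ a\ge i-1\}$, and $\bfi_{i,a}$ is defined in \eqref{Eq: bfi} precisely so that the map $(i,a)\mapsto \bfi_{i,a}$ is a bijection onto the appropriate set of $m$-subsets (with $m=n+1$) occurring as cluster/frozen variables in the initial seed of $\iGr_{n+1}^+$ (Remark \ref{Rmk: Sigma seed}, Figure \ref{fig:an initial quiver for iGr4plus}). One checks directly from \eqref{Eq: bfi} that this is a bijection and that it carries the frozen elements $(i,a)$ with $i=n+1$ (i.e.\ $\bfi_{a,b}\in\cvF{m}$) to the frozen variables of $\iGr_{n+1}^+$.

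Next I would promote this bijection of index sets to an algebra isomorphism. The cleanest route is to invoke the known isomorphism $K(\catC_\ell)\cong \C[\tGr(n+1,n+\ell+2)]/(\text{frozens}=1)$ at each finite level $\ell$ (\cite{HL10,CDFL20}), note that $\catCP$ is the colimit (union) of the $\catC_\ell$ as $\ell\to\infty$ and correspondingly $\iGr_{n+1}^+$ is the colimit of the $\C[\tdGr(n+1,N)]$ by Lemma \ref{lem:Grplus has a cluster algebra structure induced from GrmN}, and check that the level-$\ell$ isomorphisms are compatible with the colimit maps on both sides — on the $K$-theory side the colimit maps are the inclusions $K(\catC_\ell)\hookrightarrow K(\catC_{\ell+1})$ coming from $\catC_\ell\subset\catC_{\ell+1}$, and on the Grassmannian side they are the rooted cluster morphisms $\iota_N$ of Lemma \ref{lem:Grplus has a cluster algebra structure induced from GrmN}; both send a fundamental module / Plücker variable to the same-named one. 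Passing to the colimit yields $\KG_n^+$. One must verify that under $\psi_{m,N}$ of Lemma \ref{Lem: phimN} the Plücker coordinate $\Delta_\bfi$ corresponds to $\vP_\bfi$, that $\KG_n^+$ is an algebra map (the Plücker relations \eqref{Eq: Plucker} on the target are exactly the relations among the $q$-characters of fundamental modules — the $T$-system relations — transported along the dictionary), and that it is bijective because it is a colimit of bijections.

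For part (ii), the relation $\KG_n^+\circ\shift_k=\sh_k\circ\KG_n^+$ is checked on generators: $\shift_k$ sends $Y_{i,a}\mapsto Y_{i,a+2k}$ by \eqref{Eq: sps}, hence $[V(Y_{i,a})]\mapsto[V(Y_{i,a+2k})]$, which $\KG_n^+$ carries to $\vP_{\bfi_{i,a+2k}}$; on the other hand $\sh_k$ sends $\vP_{\bfi_{i,a}}\mapsto\vP_{\bfi_{i,a}+k}$ by \eqref{Eq: auto sh}, so it suffices to observe from \eqref{Eq: bfi} that $\bfi_{i,a+2k}=\bfi_{i,a}+k$, which is immediate since $\bfi_{a,b}=\bfi_a+\tfrac{b-a+1}{2}$. (Here one keeps $k>0$ so that the shift stays inside $\crhI_{\Z_{\ge0}}$ and inside $\iGr_{n+1}^+$, as asserted.) The special case follows from \eqref{Eq: dual to cdual}: $[\dual^2(V(Y_{i,a}))]=[V(Y_{\cdual^2(i,a)})]$ and $\cdual^2(i,a)=(i,a+2(n+1))$, so $[\dual^2]$ on $K(\catCP)$ is exactly $\shift_{n+1}$, and part (ii) with $k=n+1$ gives $\KG_n^+\circ[\dual^2]=\sh_{n+1}\circ\KG_n^+$.

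The main obstacle I anticipate is not any single computation but the bookkeeping in the first two paragraphs: pinning down precisely that the combinatorial map $(i,a)\mapsto\bfi_{i,a}$ matches the Scott/Hernandez–Leclerc labelling of frozen and initial cluster variables across \emph{all} finite levels simultaneously, and that the colimit structures on the two sides are genuinely identified (rather than merely abstractly isomorphic) by the naïve "same name" maps. Once the level-$\ell$ identifications are known to be compatible with stabilization — which is where the colimit description of Lemma \ref{lem:Grplus has a cluster algebra structure induced from GrmN} does the real work — the rest is a routine check on generators and relations.
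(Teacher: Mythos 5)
Your argument is correct and takes essentially the same route as the paper, whose entire proof of (i) is a citation of \cite[Lemma 3.9]{CDFL20} (the finite-level isomorphism you invoke) with (ii) declared immediate; your colimit bookkeeping and the computation $\bfi_{i,a+2k}=\bfi_{i,a}+k$ merely make explicit what the paper leaves implicit. Two passing remarks should be corrected, though neither affects the argument: $K(\catCP)$ is a polynomial ring on the classes of the \emph{fundamental} modules (Frenkel--Reshetikhin), not on the prime simple modules, and the Pl\"{u}cker relations are not literally ``relations among the $q$-characters of fundamental modules'' (those are algebraically independent) but identities satisfied by the images of the remaining Pl\"{u}cker coordinates under the dictionary.
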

\begin{proof}
(i) It follows from \cite[Lemma 3.9]{CDFL20}.

(ii) It follows from (i) directly.
\end{proof}

\begin{example} \label{Ex: VY P}
We keep the notations in Example \ref{Ex: pic for crhI}.

The correspondence between $[V(Y_{i, a})]$ and $\vPab{  i,  a }$ under the isomorphism $\KG_n^+$ is given pictorially as follows (see also Example \ref{Ex: Pab} and Example \ref{Ex: pic for [a,b]}).
$$ 
\scalebox{0.8}{\xymatrix@C=1ex@R=  0.0ex{ 
i \diagdown\, a     \ar@{-}[dddd]<3ex> \ar@{-}[rrrrrrrrrrrrrrrr]<-2.0ex>    & -1 & 0 & 1 & 2 & 3 & 4 & 5 & 6 & 7 & 8 &9&10 & 11& 12 & & &   \\
1       & &  \vPab{1,0} & & \vPab{1,2}  &  & \vPab{1,4}  & &  \vPab{1,6} & & \vPab{1,8} && \vPab{1,10} && \vPab{1,12}   \\
2     &  & & \vPab{2,1} & & \vPab{2,3} &  &  \vPab{2,5}  & &  \vPab{2,7}  && \vPab{2,9} && \vPab{2,11}  \\
3      & &  & & \vPab{3,2} & &  \vPab{3,4}  & &  \vPab{3,6} && \vPab{3,8} && \vPab{3,10} && \vPab{3,12} \\
& 
}}
$$	
\end{example}

For any element $x \in K(\catCO)$, we define 
$$
\KG_n(x) := \sh_{- k} \circ  \KG_n^+ \circ \shift_k (x)
$$
for a sufficiently large positive integer $ k \gg 0$. By Lemma \ref{lemma: Phi+}, it is well-defined. We thus extend $\KG_n^+$ to an algebra isomorphism of the whole algebras, which denoted by $\KG_n$, i.e., 
\begin{align} \label{Eq: Phi}
\KG_n : K(\catCO)  \buildrel \sim \over \longrightarrow \biGr_{n+1}. 
\end{align}
Lemma \ref{lemma: Phi+} says that  
\begin{align} \label{Eq: Phin sh}
	\KG_n \circ \shift_k  =  \sh_k \circ \KG_n\qquad \text{ for $k\in \Z.$} 
\end{align}

Since the braid group $\bT_i$ respects to $\sB(\g)$ (see Lemma \ref{Lem: Ti respects B}) and the $(q,t)$-characters go to $q$-characters when specializing $t=1$, 
$\bT_i$ can be regarded as an automorphism of $ K(\catCO)$.

\begin{lemma} \label{Lem: T_1 [a,b]}\ 
\bni
\item For any segment  $[a,b]$ and $k\in \Z$, we have 
$$ 
\bT_1 \left( [ V([a,b]_k) ] \right) = 
\begin{cases}
    [ V([1]_{k+1})  & \text{ if $a=b = 1$}, \\
    [V([1]_{k+1})] \cdot [V([1,b]_k)] - [V([2,b]_k)]  & \text{ if $a = 1$ and $b>1$}, \\
    [ V([1,b]_k) ] & \text{ if $a = 2$},  \\
    [ V([a,b]_k) ] & \text{ otherwise}.
\end{cases}
$$
\item Let $\crhI_\qQ$ be given in \eqref{Eq: S S+ SQ}. For any $(i,p) \in \crhI_\qQ$ and $k\in \Z$, we have 
$$ 
\bT_1 \left( [ V(Y_{\cdual^k(i,p)}) ] \right) = 
\begin{cases}
[ V( Y_{\cdual^k(n,n+1)})]  & \text{ if $i=1, p=0$}, \\
[ V( Y_{\cdual^k(n,n+1)})] \cdot [ V( Y_{\cdual^k(i,i-1)})] & \smash{\raisebox{-1.6ex}{\text{ if $p = i-1$ and $i>1$}}} \\
 \quad  - [ V( Y_{\cdual^k(i-1,i)})] \\
[ V( Y_{\cdual^k(i+1,i)})] & \text{ if $p = i+1$},  \\
[ V( Y_{\cdual^k(i,p)})] & \text{ otherwise}.
\end{cases}
$$
\ee
\end{lemma}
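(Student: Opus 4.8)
The plan is to transport the whole computation to the extended crystal $\cB(\infty)\simeq\hMS_n$, where $\bT_1$ becomes the combinatorially explicit map $R_1$. By Theorem \ref{Thm: Ti=Ri} we have $\bT_1=\bR_1$ on $\sB(\g)$, and the defining square \eqref{Eq: R_iQ} together with the crystal isomorphism $\phi=\phi_\qQ$ of \eqref{Eq: [a,b]} gives $\bT_1\bigl([V([a,b]_k)]\bigr)=\phi\bigl(R_1([a,b]_k)\bigr)$ for every segment $[a,b]$ and every $k\in\Z$. The value of $R_1$ on a segment is recorded in Example \ref{Ex: cB(inf)}(ii): it is the single segment $[1]_{k+1}$ when $a=b=1$, the multisegment $[1]_{k+1}+[1,b]_k$ when $a=1<b$, the single segment $[1,b]_k$ when $a=2$, and $[a,b]_k$ otherwise. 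First I would dispose of the three cases in which $R_1([a,b]_k)$ is a single segment: since $\phi$ sends a segment $[c,d]_\ell$ to $[V([c,d]_\ell)]=[V(Y_{\pk_{[c,d]_\ell}})]$, applying $\phi$ returns $[V([1]_{k+1})]$, $[V([1,b]_k)]$ and $[V([a,b]_k)]$ respectively, which is exactly the asserted value in those cases of part (i).

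The only case requiring real work is $a=1<b$, where $R_1([1,b]_k)=[1]_{k+1}+[1,b]_k$ is a proper multisegment, so by \eqref{Eq: Vm} its image under $\phi$ is the simple module $[V([1]_{k+1}+[1,b]_k)]$ associated with the product of the two fundamental monomials. To match the statement I must therefore establish the Grothendieck-ring identity
\[
[V([1]_{k+1})]\cdot[V([1,b]_k)]\;=\;[V([1]_{k+1}+[1,b]_k)]+[V([2,b]_k)]\qquad\text{in }K(\catCO).
\]
Here $V([1]_{k+1})=V(Y_{\cdual^{k+1}(1,0)})$, $V([1,b]_k)=V(Y_{\cdual^{k}(b,b-1)})$ and $V([2,b]_k)=V(Y_{\cdual^{k}(b-1,b)})$ are fundamental modules, whereas $V([1]_{k+1}+[1,b]_k)$ is not; thus the identity is precisely the assertion that the tensor product of these two fundamental modules in type $A_n$ has composition length at most two with second factor $V([2,b]_k)$. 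This is the main obstacle. I expect to handle it either from the known decomposition of the tensor product of two fundamental modules in type $A$ (equivalently, from the relevant $T$-system relation among $q$-characters), or by a direct computation in the bosonic extension: since $M_b:=T_1T_2\cdots T_{b-1}(f_{b,0})$ (the element $M_b$ from the proof of Lemma \ref{Lem: Ti respects B}) satisfies $\Phi(M_b)=[V([1,b]_0)]$, one has $\bT_1([V([1,b]_0)])=\Phi(T_1(M_b))$, and expanding $T_1(M_b)$ with the formula \eqref{Eq: T_i} for the braid action on $\qAg$ and specializing $\qq=1$ produces the three-term expression; the general $k$ then follows since $\bT_1$ commutes with $\shift_{n+1}$ by \eqref{Eq: Sh T= T Sh}.

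Finally, I would obtain part (ii) as a reformulation of part (i) under the bijection between $\crhI_\qQ$ and the set of segments given by $\pk$. Solving $\pk_{[a,b]_0}=(b-a+1,\,a+b-2)=(i,p)$ gives $a=(p-i+3)/2$ and $b=(i+p+1)/2$, and $(i,p)\in\crhI_\qQ$ is readily seen to be equivalent to $1\le a\le b\le n$; moreover $a=b=1$ iff $(i,p)=(1,0)$, $a=1<b$ iff $p=i-1$ with $i>1$, $a=2$ iff $p=i+1$, and $a\ge3$ otherwise. Running the four cases of part (i) through this dictionary, and rewriting $\phi([c,d]_\ell)=[V(Y_{\cdual^{\ell}(d-c+1,\,c+d-2)})]$ --- in particular $\phi([1,i]_k)=[V(Y_{\cdual^k(i,i-1)})]$, $\phi([2,i]_k)=[V(Y_{\cdual^k(i-1,i)})]$, $\phi([1,i+1]_k)=[V(Y_{\cdual^k(i+1,i)})]$, and $\phi([1]_{k+1})=[V(Y_{\cdual^k(n,n+1)})]$ using $\cdual^{k+1}(1,0)=\cdual^{k}(\cdual(1,0))=\cdual^{k}(n,n+1)$ --- reproduces exactly the four cases of part (ii).
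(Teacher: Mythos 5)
Your overall strategy is the paper's: Theorem \ref{Thm: Ti=Ri} together with the squares \eqref{Eq: R_iQ} and \eqref{Eq: [a,b]} reduces everything to the explicit values of $R_1$ on segments from Example \ref{Ex: cB(inf)}, the three single-segment cases follow at once, and part (ii) is exactly the dictionary $(i,p)\leftrightarrow[(p-i+3)/2,(p+i+1)/2]_k$ that the paper uses. You also correctly isolate the one nontrivial point, namely the identity
$$[V([1]_{k+1})]\cdot[V([1,b]_k)]=[V([1]_{k+1}+[1,b]_k)]+[V([2,b]_k)]$$
in $K(\catCO)$. But you do not prove it: you name two possible methods (``I expect to handle it either from \dots or by \dots'') and carry out neither, so as written the argument has a gap at precisely the step you yourself flag as the main obstacle.

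For the record, the paper closes this step with the $\de$-invariant machinery rather than either of your suggestions. By Dorey's rule $V([1,b]_k)\simeq V([1]_k)\htens V([2,b]_k)$ for $b>1$; by \cite[Proposition 3.16]{KKOP20} and \cite[Lemma B.2]{AK97} one has $\de\bigl(V([1]_{k+1}),V([1,b]_k)\bigr)=1$; and since $V([1]_{k+1})$ is the right dual of $V([1]_k)$, the cancellation $\bigl(V([1]_k)\htens V([2,b]_k)\bigr)\htens V([1]_{k+1})\simeq V([2,b]_k)$ of \cite[Lemma 2.5]{KP22} identifies the socle, so \cite[Proposition 4.7]{KKOP20} yields the exact sequence $0\to V([2,b]_k)\to V([1]_{k+1})\tens V([1,b]_k)\to V([1]_{k+1})\htens V([1,b]_k)\to 0$, which is the desired identity. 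Your first alternative (the classical length-two decomposition of a tensor product of two fundamental modules in type $A$) would also work, but it is not automatic: you must verify that the two spectral parameters sit in the resonant position and identify the non-head composition factor as $V([2,b]_k)$, not merely assert that some two-term decomposition exists. Your second alternative (expanding $T_1(M_b)$ in $\qAg$ via \eqref{Eq: T_i}) is harder than you suggest, since $M_b$ is not a generator and \eqref{Eq: T_i} only defines $T_1$ on the $f_{j,p}$; you would first need an explicit expression for $M_b$ as a polynomial in the generators before anything can be specialized at $\qq=1$.
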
	
\begin{proof}
(i) It follows from Example \ref{Ex: cB(inf)}, \eqref{Eq: R_iQ}, \eqref{Eq: Vm} and Theorem \ref{Thm: Ti=Ri} that 
$$ 
\bT_1 \left( [ V([a,b]_k) ] \right) = 
\begin{cases}
	[ V([1]_{k+1})  & \text{ if $a=b = 1$}, \\
[ V([1]_{k+1}) \htens V([1,b]_k) ] & \text{ if $a = 1$ and $b>1$}, \\
[ V([1,b]_k) ] & \text{ if $a = 2$},  \\
[ V([a,b]_k) ] & \text{ otherwise}.
\end{cases}
$$
Note that, for $b>1$, $ V([1,b]_k) \simeq  V([1]_k) \htens  V([2,b]_k)$ by Dorey's rule (see \cite[Lemma B.1]{AK97} for example).
By \cite[Proposition 3.16]{KKOP20} and \cite[Lemma B.2]{AK97}, we have
$$ 
\de ( V([1]_{k+1}  ),  V([1,b]_k ) ) = 1
$$
(see \cite[Section 3]{KKOP20} for the definition of $\de$).
Since $ V([1,b]_k) \htens V([1]_{k+1}) \simeq  (V([1]_k) \htens  V([2,b]_k)) \htens V([1]_{k+1}) \simeq  V([2,b]_k) $ (see \cite[Lemma 2.5]{KP22} for example), by \cite[Proposition 4.7]{KKOP20}, we have the following exact sequence
$$
\xymatrix{
0 \ar[r] & V([2,b]_k) \ar[r] & V([1]_{k+1}) \tens V([1,b]_k) \ar[r] &  V([1]_{k+1}) \htens V([1,b]_k) \ar[r] & 0. 
}
$$
Thus we have $ [V([1]_{k+1}) \htens V([1,b]_k)] = 	[V([1]_{k+1})] \cdot [V([1,b]_k)] - [V([2,b]_k)]$.

(ii) By \eqref{Eq: [a,b]}, we know that $[V(Y_{ \cdual^k(i,p)})]$ corresponds to $V([(p-i+3)/2, (p+i+1)/2]_k)$
via the isomorphism $\phi_\qQ$. Thus the statement follows from (i) directly.
\end{proof}

\begin{theorem} \label{Thm: T=sig}
For any $i\in I_0$, we have $ \KG_n \circ \bT_i = \sigma_i \circ \KG_n $, i.e., the following diagram commutes:
$$
\xymatrix{
K(\catCO) \ar[d]_{\bT_i}  \ar[rr]^\sim_{\KG_n} && \biGr_{n+1} \ar[d]^{\sigma_i} \\
K(\catCO) \ar[rr]^\sim_{\KG_n} && \biGr_{n+1} .
}
$$
\end{theorem}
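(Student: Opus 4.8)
The plan is to reduce the theorem to the single case $i=1$ and then verify the commutation on the generators $[V(Y_{i,a})]$ by comparing the explicit formula for $\bT_1$ in Lemma \ref{Lem: T_1 [a,b]} with the explicit formula for $\sigma_1$ in Lemma \ref{Lem: sigma1 on SD}. First I would observe that all three families of maps behave compatibly with the spectral-parameter shifts: by Lemma \ref{Lem: Ti respects B}(ii) we have $\bT_k = \shift_{k-1}\circ\bT_1\circ\shift_{1-k}$, by \eqref{Eq: sigk} we have $\sigma_k = \sh_{k-1}\circ\sigma_1\circ\sh_{1-k}$, and by \eqref{Eq: Phin sh} the isomorphism $\KG_n$ intertwines $\shift_k$ with $\sh_k$. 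Hence $\KG_n\circ\bT_i = \sigma_i\circ\KG_n$ for all $i\in I_0$ follows formally from the single identity $\KG_n\circ\bT_1 = \sigma_1\circ\KG_n$, and it suffices to check the latter on a generating set of $K(\catCO)$, for which I would take the classes $[V(Y_{i,a})]$ with $(i,a)\in\crhI_\Z$. Using \eqref{Eq: Phin sh} once more, together with the fact that $\crhI_\Z = \bigcup_k \cdual^k(\crhI_\qQ)$ and that $\KG_n\circ[\dual^2] = \sh_{n+1}\circ\KG_n$, it is enough to treat $(i,a) = \cdual^k(i',p)$ for $(i',p)\in\crhI_\qQ$ and a single value of $k$ (absorbing the rest into shifts); concretely this means checking the identity on the variables $\vP_{\bfi_{a,b}}$ with $(a,b)$ ranging over $S\setminus D$, since by Example \ref{Ex: Pab} (for $n=3$, and analogously in general) one has $S\setminus D = \crhI_\qQ\cup\cdual(\crhI_\qQ)$.

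The heart of the argument is then a term-by-term matching. On the module side, Lemma \ref{Lem: T_1 [a,b]}(ii) gives, for $(i,p)\in\crhI_\qQ$,
$$
\bT_1\bigl([V(Y_{\cdual^k(i,p)})]\bigr) =
\begin{cases}
[V(Y_{\cdual^k(n,n+1)})] & \text{if } i=1,\ p=0,\\
[V(Y_{\cdual^k(n,n+1)})]\cdot[V(Y_{\cdual^k(i,i-1)})] - [V(Y_{\cdual^k(i-1,i)})] & \text{if } p=i-1,\ i>1,\\
[V(Y_{\cdual^k(i+1,i)})] & \text{if } p=i+1,\\
[V(Y_{\cdual^k(i,p)})] & \text{otherwise}.
\end{cases}
$$
On the Grassmannian side, Lemma \ref{Lem: sigma1 on SD} gives the parallel four-case formula for $\sigma_1(\vP_{\bfi_{a,b}})$ with cases indexed by $A$, $B$, $C$ and the generic case, where $w = s_1 s_{m+1}$ acts on the index tuple. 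I would then spell out, via the dictionary $[V(Y_{i,a})] \mapsto \vP_{\bfi_{i,a}}$ of Lemma \ref{lemma: Phi+}(i) (extended by shifts as in \eqref{Eq: Phi}), that the four cases correspond exactly: $i=1, p=0$ is the case $(a,b)\in B$ (with $[V(Y_{\cdual^k(n,n+1)})]$ matching $\vP_{\bfi_{1,2m}}$); $p=i-1$ with $i>1$ is the case $(a,b)\in A$ — here one must check that $\vP_{\bfi_{a+1,a}}$ is the image of $[V(Y_{\cdual^k(i,i-1)})]$, and that the product-minus-difference shape on the module side (which came from the exact sequence in Lemma \ref{Lem: T_1 [a,b]}(i)) matches the Plücker-relation shape on the cluster side; $p=i+1$ is either the generic $w$-case or the case $(a,b)\in C$ depending on whether the relevant index lies below or above the "diagonal", and one checks $w(\bfi_{i,i+1})$ or the $C$-formula reproduces $\vP_{\bfi_{i+1,i}}$; and the "otherwise" case matches the generic $\vP_{w(\bfi_{a,b})}$, which one verifies acts as the identity on the relevant classes after passing to $\biGr$. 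I would organize this as a short case analysis keyed to the partition $S\setminus D = A \sqcup B \sqcup C \sqcup (\text{rest})$, citing Example \ref{Ex: pic for crhI} / Example \ref{Ex: Pab} / Example \ref{Ex: VY P} to keep the index bookkeeping transparent.

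The main obstacle I expect is precisely this index bookkeeping: confirming that the combinatorial labelling of $\crhI_\qQ$ by $(i,p)$ used in Lemma \ref{Lem: T_1 [a,b]}, the labelling of segments $[a,b]_k$ used in the multisegment realization, and the labelling of Plücker variables $\vP_{\bfi_{a,b}}$ by $(a,b)\in S$ used in Lemma \ref{Lem: sigma1 on SD} are all pinned down by one and the same bijection, so that "case $A$", "case $p=i-1$" and "the underlined boxes" genuinely coincide — and likewise for $B$ and $C$. A secondary point requiring care is the passage from $\iGr$ to $\biGr$: the formula for $\sigma_1$ in Lemma \ref{Lem: sigma1 on SD} is already stated on $\biGr$ (frozen variables set to $1$), and one must make sure that the $D$-cases, where $\bT_1$ acts trivially up to shift but $\tsig_1$ produces the constant $1$, are consistently accounted for — this is exactly where the choice to work with generators indexed by $S\setminus D$ rather than all of $S$ pays off, since those frozen classes are killed in $K(\catCP)\simeq\biGr_{n+1}^+$. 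Once the dictionary is fixed, each of the four cases is a one-line verification, and the braid-shift reduction from the first paragraph then upgrades the result from $i=1$ to all $i\in I_0$, completing the proof.
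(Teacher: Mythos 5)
Your proposal follows essentially the same route as the paper: reduce to $i=1$ by conjugating with the shifts, then verify the identity on the generators indexed by $S\setminus D=\crhI_\qQ\cup\cdual(\crhI_\qQ)$ by matching the four-case formula of Lemma \ref{Lem: T_1 [a,b]} against that of Lemma \ref{Lem: sigma1 on SD} under the dictionary $\KG_n$ — this is exactly the paper's proof, whose cases $k=0,1$ correspond to $\crhI_\qQ$ and $\cdual(\crhI_\qQ)$. Two corrections to your tentative bookkeeping: the available shift symmetry is by $\cdual^2$ (i.e.\ $\sh_{n+1}$ on the Grassmannian side), so two residues of $k$ must be checked rather than ``a single value'' — which you in effect do by ranging over all of $S\setminus D$ — and your case assignments are permuted relative to what actually occurs: at $k=0$ the case $p=i+1$ lands in $A$, while $p=i-1$ lands in the generic $w$-case (and for $i>1$ needs one extra Pl\"ucker relation to identify $\vP_{\bfi_{n,n+1}}\vP_{\bfi_{i,i-1}}-\vP_{\bfi_{i-1,i}}$ with $\vP_{w(\bfi_{i,i-1})}$), whereas $B$ and $C$ arise only at $k=1$ for $p=i-1$ with $i=1$ and $i>1$ respectively. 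These are precisely the verifications you flagged as the main obstacle, and once carried out the argument closes as you describe.
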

\begin{proof}
For notational simplicity, we write 
$$
V(\cdual^k(i,p)) :=  [ V(Y_{\cdual^k(i,p)}) ].
$$
By Lemma \ref{Lem: Ti respects B}, \eqref{Eq: sigk} and \eqref{Eq: Phin sh}, it suffices to show the statement holds for $i=1$. Recall $\crhI_\qQ$ (see \eqref{Eq: S S+ SQ}) and $S$, $A$, $B$, $C$ and $D$ (see \eqref{Eq: SABCD}). 
It is easy to see that 
$$
\bbS := S \setminus D = \crhI_\qQ \cup \cdual(\crhI_\qQ).
$$
By \eqref{Eq: Sh T= T Sh} and \eqref{Eq: sigk}, it is enough to show that 
\begin{align} \label{Eq: Phi T sig Phi}
\KG_n \circ \bT_1 ( V(\cdual^k(i,p))  ) = \sigma_1 \circ \KG_n (V(\cdual^k(i,p)) ) \qquad\text{ for any $(i,p) \in \crhI_\qQ$ and $k=0,1$.}
\end{align}

Recall $m=n+1$. Let 
$$
 w := s_1 s_{m+1} \in \sg_{+\infty}.
$$ 
We shall prove by case-by-case check. 
Lemma \ref{Lem: sigma1 on SD} and Lemma \ref{Lem: T_1 [a,b]} will be used frequently in the following checks, but for more readability, we don't mention these lemmas in the following checks.

\noindent
\textbf{(Case $k=0$)} Let $(i,p) \in \crhI_\qQ$.
\bni
\item If $ p > i+1  $, then we have
\bna
\item $(i,p) \in \bbS \setminus (A\cup B\cup C) $,
\item $1,2 \notin \bfi_{i,p}$ and  $m+1,m+2 \in \bfi_{i,p}$, i.e., $w(\bfi_{i,p}) = \bfi_{i,p} $.
\ee
We have that 
$$
\KG_n \circ \bT_1 ( V(i,p) ) = \KG_n  ( V(i,p) ) 
= \vP_{ \bfi_{i,p} } = \vP_{ w(\bfi_{i,p}) } = \sigma_1 \circ \KG_n ( V(i,p) ).
$$

\item Assume $ p = i+1 $. Since $(i,p) \in A $, we obtain  
$$
\KG_n \circ \bT_1 ( V(i,i+1) ) = \KG_n  ( V(i+1,i) ) 
= \vP_{ \bfi_{i+1,i} } =  \sigma_1 \circ \KG_n ( V(i,i+1) ).
$$
\item Assume $ p = i-1$. It is easy to see that $(i,p) \in \bbS \setminus (A\cup B\cup C) $. 

If $i=1$, then $w(\bfi_{1,0}) = \bfi_{m-1,m} = \bfi_{n,n+1}$. Thus we have 
$$
\KG_n \circ \bT_1 ( V(1,0) ) = \KG_n  ( V(n,n+1) ) 
= \vP_{ \bfi_{n,n+1} } =  \sigma_1 \circ \KG_n ( V(1,0) ).
$$

If $i>1$, then 
\begin{align*}
 \bfi_{i,i-1} &= (1,2, \ldots, i, i+2, \ldots,  m+1), \\
\bfi_{k-1,k} &= (2, \ldots, k, k+2, \ldots,  m+2) \qquad (k=i,m), \\
 w(\bfi_{i,i-1}) &= (1,2, \ldots, i, i+2 + \delta_{i, m-1}, \ldots, m+2).
\end{align*}
Then we have 
\begin{align*}
\KG_n \circ \bT_1 ( V(i,i-1) ) = \vP_{ \bfi_{m-1,m} } \vP_{ \bfi_{i,i-1} } - \vP_{ \bfi_{i-1,i} } =  \vP_{ w(\bfi_{i,i-1}) } = \sigma_1 \circ \KG_n ( V(i,i-1) ),
\end{align*}
where the second identity follows by 
applying 
$\bfi = (2,3, \ldots i,i+2 + \delta_{i, m-1},\ldots,  m+2) \in \cvC{m-1}$ and $\bfj = (1,2,\ldots, m+1) \in \cvC{m+1}$ to the Pl\"{u}cker relation \eqref{Eq: Plucker}.
\ee

\noindent
\textbf{(Case $k=1$)} Let $(i,p) \in \crhI_\qQ$. Note that $\cdual(i,p) = (n+1-i, p+n+1)$.
\bni
\item If $ p > i+1  $, then we have
\bna
\item $\cdual(i,p) \in \bbS \setminus (A\cup B\cup C) $,
\item $1,2 \notin \bfi_{\cdual(i,p)}$ and  $m+1,m+2 \in \bfi_{\cdual(i,p)}$, i.e., $w(\bfi_{\cdual(i,p)}) = \bfi_{\cdual(i,p)} $.
\ee
We have that 
$$
\KG_n \circ \bT_1 ( V( \cdual(i,p)) ) = \KG_n  ( V(\cdual(i,p)) ) 
= \vP_{ \bfi_{\cdual(i,p)} } = \vP_{ w(\bfi_{\cdual(i,p)}) } = \sigma_1 \circ \KG_n ( V(\cdual(i,p)) ).
$$

\item Assume $ p = i+1 $. Note that $\cdual(i,i+1) \in \bbS \setminus (A\cup B\cup C) $ and 
\begin{align*}
\bfi_{\cdual(i+1, i)} &= \bfi_{n-i,i+n+1} = 
(i+2, i+3, \ldots, m, m+2, \ldots, m+i+2), \\ 
w (\bfi_{\cdual(i+1, i)}) &=   
(i+2, i+3, \ldots, m, m+1, m+3, \ldots, m+i+2)
= \bfi_{n-i+1,i+n+2}.
\end{align*}

Thus we have 
\begin{align*}
\KG_n \circ \bT_1 ( V( \cdual( i,i+1)) ) &= \KG_n  ( V(\cdual(i+1,i)) ) 
= \vP_{ \bfi_{\cdual(i+1,i)} } =  
\vP_{ w(\bfi_{n-i+1,i+n+2}) } \\
&= \sigma_1 \circ \KG_n ( V(n-i+1,i+n+2) )
= \sigma_1 \circ \KG_n ( V( \cdual ( i,i+1) ).
\end{align*}

\item Assume $ p = i-1$. It is easy to see that $\cdual(i,p) \in B\cup C $. 

If $i=1$, then $\cdual(1,0) \in B$. Thus we have 
\begin{align*}
\KG_n \circ \bT_1 ( V( \cdual( 1,0)) ) &= \KG_n  ( V(\cdual( n,n+1)) ) 
= \vP_{ \bfi_{ \cdual( n,n+1 )} } 
= \vP_{ \bfi_{  1,2(n+1) } } \\
& =  \sigma_1 \circ \KG_n ( V(n,n+1) ) =  \sigma_1 \circ \KG_n ( V( \cdual(1,0)) ).
\end{align*}

If $i>1$, then $\cdual(i,i-1) \in C$. Thus we have 
\begin{align*}
\KG_n \circ \bT_1 ( V( \cdual( i,i-1)) ) &= 
 \vP_{ \bfi_{ \cdual( n,n+1 )} } \vP_{ \bfi_{ \cdual( i,i-1 )} }  - \vP_{ \bfi_{ \cdual( i-1,i )} } \\
& = \vP_{ \bfi_{   1,2m } } \vP_{ \bfi_{   m-i,m+i-1 } }  - \vP_{ \bfi_{  m-i+1,m+i } } \\
& =  \sigma_1 \circ \KG_n ( V(m-i,m+i-1) ) \\
& =  \sigma_1 \circ \KG_n ( V( \cdual(i,i-1)) ).
\end{align*}
\ee
Therefore, we have the assertion.
\end{proof}

We define  
$$
\gB_{n+1} := \KG_n(\sB(\g)) \subset \biGr_{n+1}.
$$
By construction, $\gB_{n+1}$ is a basis of $\biGr_{n+1}$ with non-negative structure constants, and all cluster monomials are contained in $\gB_{n+1}$.
We then obtain the coincidence between $\bR_i$ and $\sigma_i$.

\begin{corollary} \label{Cor: R=sig}
For any $i\in I_0$, we have $ \KG_n \circ \bR_i = \sigma_i \circ \KG_n $ on the categorical crystal $\sB(\g)$, i.e., the following diagram commutes:
$$
\xymatrix{
\sB(\g) \ar[d]_{\bR_i}  \ar[rr]^{\KG_n} && \gB_{n+1} \ar[d]^{\sigma_i} \\
\sB(\g) \ar[rr]^{\KG_n} && \gB_{n+1}.
}
$$
\end{corollary}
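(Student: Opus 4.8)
The plan is to obtain the statement as an immediate formal consequence of Theorem \ref{Thm: T=sig} together with Theorem \ref{Thm: Ti=Ri}; no new computation is needed. Recall that, by Lemma \ref{Lem: Ti respects B} \ref{Lem: Ti respects B (iii)}, the automorphism $\bT_i$ of $K(\catCO)$ restricts to a bijection of the subset $\sB(\g)$, and that by Theorem \ref{Thm: Ti=Ri} this restriction coincides with the braid group action $\bR_i$ on the categorical crystal. On the other hand, by construction $\KG_n$ restricts to a bijection of $\sB(\g)$ onto $\gB_{n+1} = \KG_n(\sB(\g))$.

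First I would restrict the commuting square of Theorem \ref{Thm: T=sig} to the subset $\sB(\g) \subset K(\catCO)$: since $\KG_n \circ \bT_i = \sigma_i \circ \KG_n$ holds on all of $K(\catCO)$, it holds in particular when applied to any $b \in \sB(\g)$. Using Theorem \ref{Thm: Ti=Ri} to replace $\bT_i|_{\sB(\g)}$ by $\bR_i$ then yields $\KG_n \circ \bR_i = \sigma_i \circ \KG_n$ on $\sB(\g)$, which is precisely the asserted commutativity of the diagram.

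To make the diagram meaningful on the nose I would also check that $\sigma_i$ maps $\gB_{n+1}$ to itself: indeed $\sigma_i(\gB_{n+1}) = \sigma_i(\KG_n(\sB(\g))) = \KG_n(\bR_i(\sB(\g))) = \KG_n(\sB(\g)) = \gB_{n+1}$, where the middle equality is the commutativity just established and the next one uses that $R_i$ — hence $\bR_i$ — is a bijection of $\cB(\infty)$ (resp.\ $\sB(\g)$) by \cite{P23}. There is essentially no obstacle here; the corollary is a bookkeeping exercise chaining the two theorems through the restrictions $\sB(\g) \hookrightarrow K(\catCO)$ and $\gB_{n+1} \hookrightarrow \biGr_{n+1}$, and the only point worth stating explicitly is the stability of these subsets under $\bT_i = \bR_i$, $\sigma_i$ and $\KG_n$.
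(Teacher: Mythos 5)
Your argument is correct and is exactly the (implicit) reasoning in the paper: the corollary follows by restricting the commuting square of Theorem \ref{Thm: T=sig} to $\sB(\g)$ and invoking Theorem \ref{Thm: Ti=Ri} to identify $\bT_i|_{\sB(\g)}$ with $\bR_i$, the stability of $\gB_{n+1}=\KG_n(\sB(\g))$ under $\sigma_i$ being the routine check you carried out.
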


\vskip 2em 

\section{Examples for rank \texorpdfstring{$2$}{2} cases} \label{sec:examples for rank 2 cases}

In this section, we illustrate the braid group actions by examples. 

 Let $U_q'(\g)$ be the quantum affine algebra of affine type $A_2^{(1)}$ and let $\qQ$ be the $Q$-datum given in Section \ref{Sec: ed An}. Note that 
\begin{align*}
\crhI_\Z =  \{ (1,k),\ (2, k+1) \mid k\in 2\Z   \} \quad \text{ and } \quad \crhI_\qQ =  \{ (1,2), (2,1), (1,0) \}.
\end{align*}
The isomorphism 
$$\KG_2 : K(\catCO)  \buildrel \sim \over \longrightarrow \biGr_{3}
$$
is given in \eqref{Eq: Phi}. In this subsection, we identify $K(\catCO)$ with $\biGr_{3}$ via the isomorphism $\KG_2$ if no confusion arises.

We recall the notations given in Remark \ref{Rmk: inf Gr}. Let $\bfv = (v_1,v_2, \ldots) \in \tGr(3, \infty)$ and define $\Delta_{i_1,i_2,i_3}(\bfv) = \det(v_{i_1},v_{i_2},v_{i_3})$.
We denote by $\pi_\circ : \C[\tGr(3, \infty)] \rightarrow \biGr_{3}^+$ the limit of the map $\pi$ given in \eqref{Eq: diagram} as $N \to \infty$. Then $\Delta_{i_1,i_2,i_3}$ goes to $\vP_{i_1,i_2,i_3}$ under the map $\pi_\circ$. For simplicity, we write 
$$
d(v_{i_1}, v_{i_2}, v_{i_3}) := \pi_\circ(  \Delta_{i_1,i_2,i_3}(\bfv) ) = \pi_\circ(  \det(v_{i_1},v_{i_2},v_{i_3}) )
$$
and identify $d(v_{i_1}, v_{i_2}, v_{i_3})$ with $\vP_{i_1,i_2,i_3}$.
For any $k\in \Z$, we set
\begin{align*}
\gt_1(v_k) &:= 
\begin{cases}
v_{k} & \text{ if } k \equiv 0 \mod 3, \\
v_{k+1} & \text{ if } k \equiv 1 \mod 3, \\
d(v_{k-1}, v_{k+1}, v_{k+2}) v_{k} - v_{k-1} & \text{ if } k \equiv 2 \mod 3,
\end{cases} \\
\gt_2(v_k) &:= 
\begin{cases}
d(v_{k-1}, v_{k+1}, v_{k+2}) v_{k} - v_{k-1} & \text{ if } k \equiv 0 \mod 3, \\
v_{k} & \text{ if } k \equiv 1 \mod 3, \\
v_{k+1} & \text{ if } k \equiv 2 \mod 3.
\end{cases}
\end{align*}
By the definition of $\sigma_i$ (see the beginning of Section \ref{subsec: bg action}, Proposition \ref{prop: sigma_i} and \eqref{Eq: def of sig}), we have the following lemma.
\begin{lemma} \label{Lem: P A2}
For any $(i_1, i_2, i_3) \in \cvC{3}$, we have 
\begin{align*}
\sigma_1(\vP_{i_1,i_2,i_3}) = d(\gt_1(v_{i_1}), \gt_1(v_{i_2}), \gt_1(v_{i_3})), \\
\sigma_2(\vP_{i_1,i_2,i_3}) = d(\gt_2(v_{i_1}), \gt_2(v_{i_2}), \gt_2(v_{i_3})).
\end{align*}
\end{lemma}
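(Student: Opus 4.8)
The plan is to unwind the definition of $\sigma_i$ through Fraser's configuration-of-vectors description and then observe that the denominators occurring in the column transformations are precisely the Pl\"ucker coordinates indexed by $\cvF{3}$, which become $1$ in $\biGr_3$. By the $\sh_3$-equivariance recorded in \eqref{Eq: sigk} together with the extension \eqref{Eq: def of sig}, it suffices to prove the two identities on $\biGr_3^+$, where, by Proposition \ref{prop: sigma_i} and Remark \ref{Rmk: inf Gr}, $\sigma_i$ is the pullback of the map $\kappa_i \colon \tGr(3,\infty)\to\tGr(3,\infty)$; concretely $\sigma_i(\vP_{i_1,i_2,i_3}) = \pi_\circ\bigl(\det(v^{(i)}_{i_1},v^{(i)}_{i_2},v^{(i)}_{i_3})\bigr)$, where $\bfv^{(i)}=(v^{(i)}_k)_k := \kappa_i(\bfv)$. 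The one point needing a little care here is that the cyclic ``wrap-around'' window of Fraser's finite construction on $\toGr(3,N)$ does not affect a fixed Pl\"ucker coordinate once $N$ is large; this is exactly Proposition \ref{prop: sigma_i}(c), and it is what makes the clean column-by-column description of $\kappa_i$ legitimate in the limit.

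First I would write out $\bfv^{(i)}$ column by column from the window rule in Remark \ref{Rmk: inf Gr}. For $i=1$ the $t$-th window $(v_{3t+1},v_{3t+2},v_{3t+3})$ is replaced by $(v_{3t+2},\, w_{3t+1},\, v_{3t+3})$ with $w_{3t+1} = \frac{\det(v_{3t+1},v_{3t+3},v_{3t+4})}{\det(v_{3t+2},v_{3t+3},v_{3t+4})}\, v_{3t+2} - v_{3t+1}$; for $i=2$ it is replaced by $(v_{3t+1},\, v_{3t+3},\, w_{3t+2})$ with $w_{3t+2} = \frac{\det(v_{3t+2},v_{3t+4},v_{3t+5})}{\det(v_{3t+3},v_{3t+4},v_{3t+5})}\, v_{3t+3} - v_{3t+2}$. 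Reindexing by the residue of $k$ modulo $3$, these formulas coincide with the definitions of $\gt_1$ and $\gt_2$, except that the coefficient $d(v_{k-1},v_{k+1},v_{k+2})$ appearing in $\gt_i(v_k)$ is replaced by the ratio $\det(v_{k-1},v_{k+1},v_{k+2})/\det(v_k,v_{k+1},v_{k+2})$.

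Next I would eliminate that discrepancy. For every $k\in\Z$ the index tuple $(k,k+1,k+2)$ lies in $\cvF{3}$, so $\pi_\circ(\Delta_{k,k+1,k+2}) = \vP_{k,k+1,k+2} = 1$ in $\biGr_3$; hence applying $\pi_\circ$ to the (rational) coefficients of $v^{(i)}_k$ turns $\det(v_{k-1},v_{k+1},v_{k+2})/\det(v_k,v_{k+1},v_{k+2})$ into $d(v_{k-1},v_{k+1},v_{k+2})$, which is exactly the coefficient of $v_k$ in $\gt_i(v_k)$. Thus after $\pi_\circ$ the honest transformed vectors $v^{(i)}_k$ agree with the formal vectors $\gt_i(v_k)$ at the level of coefficients.

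Finally I would conclude by expanding $\det(v^{(i)}_{i_1},v^{(i)}_{i_2},v^{(i)}_{i_3})$ multilinearly: the coefficients are scalars and factor out, $\pi_\circ$ is a ring homomorphism so it may be applied termwise, and the previous step rewrites each coefficient; since $d(v_{j_1},v_{j_2},v_{j_3}) = \vP_{j_1,j_2,j_3}$ obeys the same sign and alternation conventions as $\det$, the result is literally $d(\gt_i(v_{i_1}),\gt_i(v_{i_2}),\gt_i(v_{i_3}))$, which is the claimed formula. I do not expect any deep obstacle; the only thing to watch is the bookkeeping in passing from the finite cyclic picture to the infinite configuration, i.e.\ invoking the stabilization of Proposition \ref{prop: sigma_i}(c) so that the $\kappa_i$ description applies. (For $\sigma_2$ one may instead avoid $\kappa_2$ altogether and derive the formula from $\sigma_2 = \sh_1\circ\sigma_1\circ\sh_{-1}$, a special case of \eqref{Eq: sigk}, together with the formula just established for $\sigma_1$.)
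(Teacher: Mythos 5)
Your proposal is correct and follows essentially the same route as the paper, which simply asserts the lemma ``by the definition of $\sigma_i$'' (via Remark \ref{Rmk: inf Gr}, Proposition \ref{prop: sigma_i} and \eqref{Eq: def of sig}); you have just written out the definition-unwinding explicitly, correctly identifying that the denominators $\det(v_k,v_{k+1},v_{k+2})$ are killed by $\pi_\circ$ since $(k,k+1,k+2)\in\cvF{3}$. The stabilization point via Proposition \ref{prop: sigma_i}(c) and the alternative derivation of the $\sigma_2$ formula from $\sigma_2=\sh_1\circ\sigma_1\circ\sh_{-1}$ are both accurate.
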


On the other hand, let us recall the extended crystal (see Section \ref{Sec: extended crystal}). Using the crystal description of the multisegment realization $\MS_n$ given in \cite[Section 7.1]{KP22}, for any $\bfm = a[2] + b[1,2] + c[1]$, we have 
\begin{align*}
 \tf_1(\bfm) &= 
 \begin{cases}
\bfm + [1] & \text{ if } a \le c,\\
\bfm -[2] + [1,2] & \text{ if } a > c,
\end{cases}
\qquad 
 \te_1(\bfm) = 
 \begin{cases}
0 & \text{ if $b=0$ and $a \ge c$},\\
\bfm -[1,2]+[2] & \text{ if $b>0$ and $a \ge c$},\\
\bfm -[1] & \text{ if $a < c$},
\end{cases}
\\
 \tf_2(\bfm) &= \bfm + [2],
\qquad \qquad \qquad \qquad \qquad 
 \te_2(\bfm) = 
 \begin{cases}
0 & \text{ if $a=0$},\\
\bfm - [2] & \text{ if $a > 0$},
\end{cases}
\end{align*}
and 
\begin{align*}
 \tfs_1(\bfm) &= \bfm + [1],
\qquad \qquad \qquad \qquad \qquad 
 \tes_1(\bfm) = 
 \begin{cases}
0 & \text{ if $c=0$},\\
\bfm - [1] & \text{ if $c > 0$},
\end{cases}
\\
 \tfs_2(\bfm) &= 
 \begin{cases}
\bfm + [2] & \text{ if } a \ge c,\\
\bfm -[1] + [1,2] & \text{ if } a < c,
\end{cases}
\qquad 
 \tes_2(\bfm) = 
 \begin{cases}
0 & \text{ if $b=0$ and $a \le c$},\\
\bfm - [1,2]+[1] & \text{ if $b>0$ and $a \le c$},\\
\bfm - [2] & \text{ if $a > c$}.
\end{cases}
\end{align*}
Note that 
\begin{align*}
 \ep_1(\bfm) = b + \max(c-a, 0), \quad \ep_2(\bfm) = a, \quad \eps_1(\bfm) = c, \quad \eps_2(\bfm) = b + \max(a-c, 0).
\end{align*}

\begin{prop} \label{prop: V A2}
For a dominant monomial
$m = \oprod_{k \in \Z} \left( Y_{\cdual^k(1,2)}^{a_k} Y_{\cdual^k(2,1)}^{b_k}  Y_{\cdual^k(1,0)}^{c_k} \right)$, we have 
$$
\bR_1([V(m)]) = [V(m')]\qquad \text{ and }\qquad \bR_2([V(m)]) = [V(m'')],
$$
where 
\begin{align*}
m' &= \oprod_{k \in \Z} \left( Y_{\cdual^k(1,2)}^{\min\{ a_k, c_k \}} Y_{\cdual^k(2,1)}^{b_k + \max\{ a_k-c_k, 0 \}}  Y_{\cdual^k(1,0)}^{b_{k-1} + \max\{ c_{k-1}-a_{k-1}, 0 \} } \right), \\
m'' &= \oprod_{k \in \Z} \left( Y_{\cdual^k(1,2)}^{ a_{k-1} + c_k - \min\{ a_{k-1}, b_k \}} Y_{\cdual^k(2,1)}^{\min\{ a_{k-1}, b_k \}}  Y_{\cdual^k(1,0)}^{b_{k} + c_k - \min\{ a_{k-1}, b_{k} \} } \right).
\end{align*}
\end{prop}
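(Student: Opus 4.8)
The plan is to carry the statement over to the multisegment realization $\hMS_2$ of $\cB(\infty)$ and to evaluate $R_1$ and $R_2$ directly from \eqref{Eq: def of Ri}. Under the isomorphism $\phi_\qQ$ (equivalently $\KG_2$ restricted to $\sB(\g)$), formula \eqref{Eq: [a,b]} gives $Y_{\cdual^k(1,2)}\leftrightarrow[2]_k$, $Y_{\cdual^k(2,1)}\leftrightarrow[1,2]_k$ and $Y_{\cdual^k(1,0)}\leftrightarrow[1]_k$, so by \eqref{Eq: Vm} the dominant monomial $m$ corresponds to the extended multisegment $\cb=(\bfm_k)_{k\in\Z}$ with $\bfm_k=a_k[2]+b_k[1,2]+c_k[1]$. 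Since $\bR_i\circ\phi_\qQ=\phi_\qQ\circ R_i$ by \eqref{Eq: R_iQ}, and since by \eqref{Eq: def of Ri} the $k$-th component of $R_i(\cb)$ is $\tfss{\ep_i(\bfm_{k-1})}_i\bigl(\tcT_i(\bfm_k)\bigr)$, it suffices to compute $\tcT_i(\bfm_k)$ for $\bfm_k$ of this shape and then to apply $\tf_i^{*}$ the appropriate number of times, reading off the result through $\phi_\qQ$.

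For $i=1$, I would first compute the $1$-string top $u:=\te_1^{\ep_1(\bfm_k)}(\bfm_k)$: since $\ep_1(\bfm_k)=b_k+\max(c_k-a_k,0)$, a short induction on the recalled formula for $\te_1$ (each application either deletes a $[1]$ or turns a $[1,2]$ into a $[2]$) gives $u=(a_k+b_k)[2]+\min(a_k,c_k)[1]$. Next, $\cT_1(u)=\tf_1^{\sphi_1(u)}\bigl(\te_1^{*}\bigr)^{\eps_1(u)}(u)$, where $\eps_1(u)=\min(a_k,c_k)$, so $\bigl(\te_1^{*}\bigr)^{\eps_1(u)}(u)=(a_k+b_k)[2]$, and $\sphi_1(u)=\eps_1(u)+\langle h_1,\wt(u)\rangle=b_k+\max(a_k-c_k,0)$, using $\langle h_1,\al_1\rangle=2$, $\langle h_1,\al_2\rangle=-1$ and $a-\min(a,c)=\max(a-c,0)$. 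Applying $\tf_1$ that many times to $(a_k+b_k)[2]$ converts $b_k+\max(a_k-c_k,0)$ of the $[2]$'s into $[1,2]$'s, so $\tcT_1(\bfm_k)=\min(a_k,c_k)[2]+\bigl(b_k+\max(a_k-c_k,0)\bigr)[1,2]$. Finally $\tf_1^{*}$ only adds a copy of $[1]$, and it is applied $\ep_1(\bfm_{k-1})=b_{k-1}+\max(c_{k-1}-a_{k-1},0)$ times; transporting back through $\phi_\qQ$ produces exactly $m'$.

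For $i=2$ the argument is parallel. Here $\ep_2(\bfm_k)=a_k$ and $\te_2$ only deletes $[2]$'s, so the $2$-string top is $u=b_k[1,2]+c_k[1]$; then $\eps_2(u)=b_k$ and $\bigl(\te_2^{*}\bigr)^{b_k}(u)$ turns each $[1,2]$ into a $[1]$, giving $(b_k+c_k)[1]$, while $\sphi_2(u)=b_k+\langle h_2,\wt(u)\rangle=c_k$ (now using $\langle h_2,\al_1\rangle=-1$, $\langle h_2,\al_2\rangle=2$); since $\tf_2$ adds $[2]$'s, $\tcT_2(\bfm_k)=c_k[2]+(b_k+c_k)[1]$. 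Then I would apply $\tf_2^{*}$ a total of $\ep_2(\bfm_{k-1})=a_{k-1}$ times: by the recalled formula for $\tf_2^{*}$, as long as there are strictly fewer $[2]$'s than $[1]$'s it moves a $[1]$ into a $[1,2]$, and once the counts agree it only adds $[2]$'s; tracking these two regimes yields $[2]$-multiplicity $c_k+\max(a_{k-1}-b_k,0)=a_{k-1}+c_k-\min(a_{k-1},b_k)$, $[1,2]$-multiplicity $\min(a_{k-1},b_k)$, and $[1]$-multiplicity $b_k+c_k-\min(a_{k-1},b_k)$, which is $m''$.

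\textbf{The main obstacle} is not a conceptual one --- no input beyond the definition of $R_i$ and the $A_2$ crystal formulas recalled before the statement is needed --- but rather a bookkeeping one: getting the $i$-string top right through the case split in the recalled formula for $\te_i$, and, above all, evaluating $\sphi_i$ inside $\cT_i$ correctly. One must remember that in $\cT_i(b)=\tf_i^{\sphi_i(b)}\bigl(\te_i^{*}\bigr)^{\eps_i(b)}(b)$ the exponent $\sphi_i(b)$ refers to the original $b=u$, not to $\bigl(\te_i^{*}\bigr)^{\eps_i(b)}(u)$, and that $\wt(u)$ is computed from $\wt([a,b])=-\al_{a,b}$; everything else reduces to the elementary $\min/\max$ identities. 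As a sanity check one can verify the formulas against the single-segment values of $R_1$ recorded in Example \ref{Ex: cB(inf)}.
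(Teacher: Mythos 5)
Your proposal is correct and follows essentially the same route as the paper: both pass to the multisegment realization via $\phi_\qQ$, compute $\tcT_1(\bfm_k)=\min(a_k,c_k)[2]+(b_k+\max(a_k-c_k,0))[1,2]$ and $\tcT_2(\bfm_k)=c_k[2]+(b_k+c_k)[1]$, and then read off $R_i$ from \eqref{Eq: def of Ri} and \eqref{Eq: [a,b]}. The only difference is that you spell out the string-top, $\sphi_i$, and $\tf_i^{*}$ bookkeeping that the paper leaves implicit; all of those details check out.
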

\begin{proof}
Let $\bfm = a[2] + b[1,2] + c[1]$. By the definition of $\tcT_i$(see \eqref{Eq: tSi}), we have 
\begin{align*}
\tcT_1(\bfm) & = \cT_1( (a+b)[2] + \min(a,c) [1] )
 = \min(a,c) [2] + (a+b-\min(a,c))[12] \\
 &= \min(a,c) [2] + (b + \max(a-c,0))[12]
\end{align*}
and 
\begin{align*}
\tcT_2(\bfm) & = \cT_2( b [1,2]+ c [1] )
 = c [2] + (b+c)[1].
\end{align*}
Then the assertion follows from the definition of $R_i$ (see \eqref{Eq: def of Ri}), \eqref{Eq: [a,b]} and Corollary \ref{Cor: R=sig}.
\end{proof}

\begin{example}
Under the identification $\KG_2 : K(\catCO)  \buildrel \sim \over \longrightarrow \biGr_{3}$, we have $[V(Y_{1,0})] = \vP_{1,3,4}$. Note that $[V(Y_{1,0})]$ is prime.

\bni
\item Lemma \ref{Lem: P A2} says that 
\begin{align*}
\sigma_2(\vP_{1,3,4}) &= 
d \left( v_{1}, d(v_2, v_4, v_5) v_3-v_2, v_4)\right) \\
&= 
d(v_2,v_4,v_5)d(v_1,v_3,v_4) - d(v_1,v_2,v_4) \\
&= \vP_{1,3,4} \vP_{2,4,5} - \vP_{1,2,4}\\
&= \vP_{1,4,5},
\end{align*}
where the last equality follows by applying 
$\bfi = (4,5) \in \cvC{2}$ and $\bfj = (1,2,3,4) \in \cvC{4}$ to the Pl\"{u}cker relation \eqref{Eq: Plucker}.
By the same manner as above, we have 
$$
\sigma_2(\vP_{2,3,5}) = \vP_{2,3,6}.
$$
Proposition \ref{prop: V A2} tells us that 
$$
\bR_2[V(Y_{1,0})] = [V(Y_{1,2}Y_{1,0})], \qquad 
\bR_2[V(Y_{2,3})] = [V(Y_{2,5}Y_{2,3})].
$$
Since $ \bR_i = \sigma_i$ by Corollary \ref{Cor: R=sig}, we have 
\begin{align} \label{Eq: ex1}
[V(Y_{1,2}Y_{1,0})] = \vP_{1,4,5}, \qquad 
[V(Y_{2,5}Y_{2,3})] = \vP_{2,3,6}.
\end{align}
Corollary \ref{Cor: real} implies that $[V(Y_{1,2}Y_{1,0})]$ and $[V(Y_{2,5}Y_{2,3})]$ are prime. Note that $[V(Y_{1,2}Y_{1,0})]$ and $[V(Y_{2,5}Y_{2,3})]$ are cluster variables.  
The equalities \eqref{Eq: ex1} can be found in the table 1 in \cite{CDFL20}.
Here is a remark about the notation $Y_{i,a}$ in \cite{CDFL20}. Due to our choice of convention (see Remark \ref{rmk: convention}), the variable $Y_{i,a}$ in the paper matches with $Y_{3-i, -a}$ used in \cite{CDFL20} for each $(i,a)$. 

\item By the same manner as above, we have 
\begin{align*}
& \sigma_1 \sigma_2(\vP_{1,3,4}) = \sigma_1(\vP_{1,4,5}) = \vP_{2,4,5}, \\
& \bR_1\bR_2[V(Y_{1,0})] = \bR_1[V(Y_{1,2}Y_{1,0})] = [V(Y_{1,2})].
\end{align*}
We then have $[V(Y_{1,2})] = \vP_{2,4,5}$. Note that $\sigma_1 \sigma_2 = \sh_1$ and 
$\bR_1\bR_2 = \shift_1$ (see Lemma \ref{Lem: Ti respects B}).

Similarly, we have 
\begin{align*}
& \sigma_2^2(\vP_{1,3,4}) = \sigma_2(\vP_{1,4,5}) = \vP_{1,4,6}, \\
& \bR_2^2[V(Y_{1,0})] = \bR_2[V(Y_{1,2}Y_{1,0})] = [V(Y_{2,5}Y_{1,2}Y_{1,0})],
\end{align*}
which tells us that 
\begin{align} \label{Eq: ex2}
[V(Y_{2,5}Y_{1,2}Y_{1,0})] = \vP_{1,4,6}.
\end{align}
The equality \eqref{Eq: ex2} also can be found in the table 1 in \cite{CDFL20}.
By Corollary \ref{Cor: real}, we know that 
 $[V(Y_{2,5}Y_{1,2}Y_{1,0})]$ is prime. 

\item Since $V(Y_{2,3})$ is the right dual of $V(Y_{1,0})$, we have the exact sequence
$$
0 \longrightarrow 1 \longrightarrow V(Y_{2,3}) \tens V(Y_{1,0})
\longrightarrow V(Y_{2,3}Y_{1,0}) \longrightarrow 0.
$$
It follows from $ [V(Y_{1,0})] = \vP_{1,3,4}$ and $ [V(Y_{2,3})] = \vP_{2,3,5}$ that
$$
[V(Y_{2,3}Y_{1,0})] = \vP_{1,3,4}\vP_{2,3,5}-1.
$$
Since $\cdual(1,0) = (2,3)$, Proposition \ref{prop: V A2} says that 
$$
\bR_2[V(Y_{2,3}Y_{1,0})] = [V(Y_{2,5}Y_{2,3}Y_{1,2}Y_{1,0})].
$$
By (i) and Lemma \ref{Lem: P A2}, we have 
\begin{align*}
\sigma_2(\vP_{1,3,4}\vP_{2,3,5}-1) = \vP_{1,4,5}\vP_{2,3,6}-1,
\end{align*}
which says that 
$$
[V(Y_{2,5}Y_{2,3}Y_{1,2}Y_{1,0})] = \vP_{1,4,5}\vP_{2,3,6}-1.
$$

Using the Pl\"{u}cker relations, we have 
\begin{align*}
[V(Y_{2,5}Y_{2,3}Y_{1,2}Y_{1,0})] &= -1+\vP_{1,4,5}\vP_{2,3,6} \\
&= -1 + \vP_{1,4,5} (-\vP_{3,5,6} + \vP_{2,3,5}\vP_{3,4,6}) \\
&= -1 + (- \vP_{1,4,5})\vP_{3,5,6} - (-\vP_{1,4,5})\vP_{2,3,5}\vP_{3,4,6}\\
&= -1 + (\vP_{1,2,4} - \vP_{1,3,4}\vP_{2,4,5} )\vP_{3,5,6} \\ 
& \quad - (\vP_{1,2,4} - \vP_{1,3,4}\vP_{2,4,5} )\vP_{2,3,5}\vP_{3,4,6}\\
&= -1 + \vP_{1,2,4}\vP_{3,5,6} - \vP_{1,3,4}\vP_{2,4,5}\vP_{3,5,6} \\
& \quad - \vP_{1,2,4}\vP_{2,3,5}\vP_{3,4,6} + 
\vP_{1,3,4}\vP_{2,3,5}\vP_{2,4,5}\vP_{3,4,6}.
\end{align*}
The last expression is given in Example 1.4 in \cite{CDFL20}.
Note that, since $[V(Y_{2,3}Y_{1,0})]$ is real by \cite[Lemma 2.28]{KKOP23}, 
$[V(Y_{2,5}Y_{2,3}Y_{1,2}Y_{1,0})]$ is real by Corollary \ref{Cor: real}.
\ee
\end{example}

\end{document}